\def\eqref#1{equation~\ref{#1}}
\def\ceil#1{\lceil #1 \rceil}
\def\1{\bm{1}}
\DeclareMathAlphabet{\mathsfit}{\encodingdefault}{\sfdefault}{m}{sl}
\SetMathAlphabet{\mathsfit}{bold}{\encodingdefault}{\sfdefault}{bx}{n}
\newcommand{\R}{\mathbb{R}}
\definecolor{mydarkgreen}{RGB}{39,130,67}
\definecolor{mydarkorange}{RGB}{236,147,14}
\definecolor{mydarkred}{RGB}{192,47,25}
\definecolor{blue}{RGB}{0,0,255}
\newcommand{\squeeze}{}
\newenvironment{protocol}[1][htb]{%
    \renewcommand{\ALG@name}{Protocol}
   \begin{algorithm}[#1]%
  }{\end{algorithm}}
\newenvironment{method}[1][htb]{%
    \renewcommand{\ALG@name}{Method}
   \begin{algorithm}[#1]%
  }{\end{algorithm}}
\newcommand{\alglinelabel}{%
  \addtocounter{ALC@line}{-1}
  \refstepcounter{ALC@line}
  \label
}
\DeclareSymbolFont{extraup}{U}{zavm}{m}{n}
\DeclareMathSymbol{\varheart}{\mathalpha}{extraup}{86}
\DeclareMathSymbol{\vardiamond}{\mathalpha}{extraup}{87}
\renewcommand{\eqref}[1]{(\ref{#1})}
\renewcommand{\ceil}[1]{\left\lceil #1\right\rceil} 
\definecolor{bgcolor}{rgb}{0.76,0.88,0.50}
\definecolor{bgcolor0}{rgb}{0.93,0.99,1}
\definecolor{bgcolor1}{rgb}{0.8,1,1}
\definecolor{bgcolor2}{rgb}{0.8,1,0.8}
\definecolor{bgcolor3}{rgb}{0.50,0.90,0.50}
\definecolor{mydarkgreen}{rgb}{39,130,67}
\definecolor{mydarkred}{rgb}{192,25,25}
\newcommand{\algname}[1]{{\sf #1}}
\newcommand{\norm}[1]{\left\| #1 \right\|}
\newcommand{\abs}[1]{\left| #1 \right|}
\newcommand{\flr}[1]{\left\lfloor #1\right\rfloor} 
\newcommand{\N}{\mathbb{N}} 
\newcommand{\Exp}[1]{{\mathbb{E}}\left[#1\right]}
\newcommand{\ExpSub}[2]{{\mathbb{E}}_{#1}\left[#2\right]}
\newcommand{\ExpCond}[2]{{\mathbb{E}}\left[\left.#1\right\vert#2\right]}
\newcommand{\Prob}[1]{\mathbb{P}\left(#1\right)} 
\newcommand{\ProbCond}[2]{\mathbb{P}\left(#1\middle\vert#2\right)}
\newcommand{\cA}{\mathcal{A}}
\newcommand{\cF}{\mathcal{F}}
\newcommand{\cO}{\mathcal{O}}
\theoremstyle{plain}
\newtheorem{theorem}{Theorem}[section]
\newtheorem*{theorem*}{Theorem}
\newtheorem{lemma}[theorem]{Lemma}
\theoremstyle{definition}
\newtheorem{definition}[theorem]{Definition}
\newtheorem{assumption}[theorem]{Assumption}
\newtheorem{example}[theorem]{Example}
\theoremstyle{remark}
\newenvironment{hproof}{%
  \proof}{\endproof}
\newcommand{\eqdef}{:=}
\newcommand{\vast}{\bBigg@{4}}
\DeclareMathOperator*{\Argmax}{Arg\,max}
\title{Tight Time Complexities in Parallel Stochastic Optimization with Arbitrary Computation Dynamics}
\author{Alexander Tyurin \\
AIRI, Moscow, Russia \\
Skoltech, Moscow, Russia \\
\texttt{alexandertiurin@gmail.com} \\
}
\begin{document}

\maketitle

\begin{abstract}
   In distributed stochastic optimization, where parallel and asynchronous methods are employed, we establish optimal time complexities under virtually any computation behavior of workers/devices/CPUs/GPUs, capturing potential disconnections due to hardware and network delays, time-varying computation powers, and any possible fluctuations and trends of computation speeds. These real-world scenarios are formalized by our new \emph{universal computation model}. Leveraging this model and new proof techniques, we discover tight lower bounds that apply to virtually all synchronous and asynchronous methods, including \algname{Minibatch SGD}, \algname{Asynchronous SGD} \citep{recht2011hogwild}, and \algname{Picky SGD} \citep{cohen2021asynchronous}. We show that these lower bounds, up to constant factors, are matched by the optimal \algname{Rennala SGD} and \algname{Malenia SGD} methods \citep{tyurin2023optimal}.
\end{abstract}

\section{Introduction}
\label{sec:introduction}

Optimization is one of the main workhorses in machine learning (ML), data science (DS), and federated learning (FL) \citep{bottou2018optimization, kairouz2021advances}. These fields rely on stochastic optimization methods, with notable examples including the stochastic gradient descent method (\algname{SGD}) \citep{robbins1951stochastic} and \algname{ADAM} \citep{kingma2014adam}, which are considered to be the de facto choices for solving large-scale optimization problems \citep{schmidt2021descending}. Due to the computational demands of modern functions, the size of datasets, and the need for data privacy, parallelization and distribution are essential for building efficient systems \citep{kairouz2021advances, mayer2020scalable}. However, it brings many challenges, including \emph{computation heterogeneity}: many workers/CPUs/GPUs/phones work in parallel but with varying computation speeds, fluctuating performance over time, and potential disconnections due to hardware and network delays \citep{li2020federateda}.

\subsection{Problem setup}

Unconstrained smooth optimization problems that arise in ML, DS, and FL are described by
\begin{align}
\label{eq:main_problem}
\textstyle \min \limits_{x \in \R^d} f(x),
\end{align}
where $f\,:\,\R^d \rightarrow \R$ with the following standard assumptions:

\begin{assumption}
  \label{ass:lipschitz_constant}
$f$ is differentiable \& $L$--smooth, i.e., $\norm{\nabla f(x) - \nabla f(y)} \leq L \norm{x - y}$, $\forall x, y \in \R^d.$
\end{assumption}

\begin{assumption}
  \label{ass:lower_bound}
  There exist $f^* \in \R$ such that $f(x) \geq f^*$ for all $x \in \R^d$.
\end{assumption}
In the nonconvex world, we want to find an $\varepsilon$--stationary point, a (random) vector $\bar{x} \in \R^d$ such that ${\rm \mathbb{E}}[\|\nabla f(\bar{x})\|^2] \leq \varepsilon,$ since, in general, it is intractable to find a global minimum in the nonconvex setting \citep{nemirovskij1983problem,murty1985some}. We analyze convex functions in Section~\ref{sec:convex}.

We consider a problem where workers \emph{do not} have access to the gradients of the function $f.$ Instead, they can only calculate stochastic gradients. Such a problem arises when the computation cost of an exact gradient is huge or even infeasible due to batch normalization \citep{ioffe2015batch}, dropout, random data augmentation, and many other handcrafted and naturally occurring noise sources that do not allow to calculate a gradient \citep{goodfellow2016deep}.

Assume that $n$ workers work asynchronously in parallel and calculate stochastic gradients. We focus on two setups: \\
\textbf{Homogeneous setup.} For all $i \in [n],$ worker $i$ has access to an unbiased stochastic gradients $\nabla f(x;\xi)$ with $\sigma^2$-variance-bounded variances, where $\xi$ is a random variable from some distribution $\mathcal{D}$ on $\mathbb{S}_{\xi}.$ In ML and FL, this would mean that all workers have access to the \emph{same data}.

\begin{assumption}[Homogeneous setup]
  \label{ass:stochastic_variance_bounded}
  For all $i \in [n],$ worker $i$ can only calculate $\nabla f(x;\xi)$ and ${\rm \mathbb{E}}_{\xi}[\nabla f(x;\xi)] = \nabla f(x)$ and 
    ${\rm \mathbb{E}}_{\xi}[\|\nabla f(x;\xi) - \nabla f(x)\|^2] \leq \sigma^2$ for all $x \in \R^d,$ where $\sigma^2 \geq 0.$
\end{assumption}

\textbf{Heterogeneous setup.} 
Unlike the previous setup, we assume that 
  $\textstyle f(x) = \frac{1}{n} \sum_{i = 1}^n f_i(x)$
in this setting,
where $f_i\,:\,\R^d \rightarrow \R$ for all $i \in [n],$ and worker $i$ can only access stochastic gradients $\nabla f_i(x;\xi_i)$ of the local function $f_i,$ where $\xi_i$ is a random variable from some distribution $\mathcal{D}_i$ on $\mathbb{S}_{\xi}.$ In ML and FL, this would mean that all workers have access to \emph{different data}.
\begin{assumption}[Heterogeneous setup]
  \label{ass:stochastic_variance_bounded_heter}
  For all $i \in [n],$ worker $i$ can only calculate $\nabla f_i(x;\xi_i),$ and ${\rm \mathbb{E}}_{\xi_i}[\nabla f_i(x;\xi_i)] = \nabla f_i(x)$ and 
    ${\rm \mathbb{E}}_{\xi_i}[\|\nabla f_i(x;\xi_i) - \nabla f_i(x)\|^2] \leq \sigma^2$ for all $x \in \R^d,$ where $\sigma^2 \geq 0.$
\end{assumption}

\subsection{Related Work}
\textbf{Oracle complexity with \emph{one worker}.} The optimal (dimension-free) \emph{oracle complexity}, \# of stochastic gradient calls, and an optimal method are well-known in the homogeneous and heterogeneous setups. In particular, \citet{arjevani2022lower,carmon2020lower} showed that the optimal oracle complexity is
  $\textstyle \operatorname{O}\left(\nicefrac{L \Delta}{\varepsilon} + \nicefrac{\sigma^2 L \Delta}{\varepsilon^2}\right)$
achieved by \algname{SGD}, i.e., $x^{k+1} = x^{k} - \gamma \nabla f(x^k; \xi^k),$ where $\xi^k$ are i.i.d. random samples, $\Delta \eqdef f(x^0) - f^*,$ $x^0 \in \R^d$ is a starting point, $\gamma = \Theta\left(\min\{\nicefrac{1}{L},\nicefrac{\varepsilon}{L \sigma^2}\}\right)$ is a step size. 

\textbf{Oracle complexities with $n$ \emph{workers}.} There were several approaches, e.g., \citep{scaman2017optimal,arjevani2020tight,lu2021optimal}, to generalize the classical oracle complexity \citep{nemirovskij1983problem, arjevani2022lower} to the parallel setup with $n$ workers. In the homogeneous convex setup, the most relevant to our setup work \citep{woodworth2018graph}, using the \emph{graph-based oracle model}, obtained the tight oracle complexities for several parallel setups. The heterogeneous setup with the local smoothness assumption was addressed in \citep{arjevani2015communication,hanzely2020lower,lu2021optimal}.

The listed works address key questions in parallel optimization by establishing lower bounds and developing methods to achieve them. However, the assumptions regarding the computation processes are overly idealistic, as they assume stable, unchanging, and equal computation speeds for all workers. They fail to capture practical scenarios such as partial participation, random outages, computation heterogeneity (some workers being faster than others), communication heterogeneity (some workers sending vectors faster than others), and changing computation performance over time. \emph{It is unclear if the optimal methods for their lower bounds will maintain optimality in more realistic computation scenarios.}

\textbf{Time complexities with bounded computation times.}
Instead of using \emph{oracle complexities}, another way to compare algorithms is to use \emph{time complexities.} Using this paradigm, \citet{mishchenko2022asynchronous,koloskova2022sharper} showed that the celebrated \algname{Asynchronous SGD} method \citep{recht2011hogwild,dean2012large} with the proposer step sizes can provably improve \algname{Minibatch SGD} in the homogeneous setup. Assume for now that worker $i$ requires at most $\tau_i$ seconds to calculate one stochastic gradient for all $i \in [n].$
\algname{Minibatch SGD} is the iterative process $x^{k+1} = x^{k} - \nicefrac{\gamma }{n} \sum_{i=1}^n \nabla f(x^k; \xi^k_i),$ where $\gamma$ is a stepsize, $\xi^k_i$ are i.i.d. samples, and $\nabla f(x^k; \xi^k_i)$ are calculated in parallel. This method converges after $\operatorname{O}\left(\nicefrac{L \Delta}{\varepsilon} + \nicefrac{\sigma^2 L \Delta}{n \varepsilon^2}\right)$ iterations \citep{cotter2011better, goyal2017accurate, gower2019sgd} and after 
\begin{align}
    \label{eq:mini_batch_compl}
    \textstyle \operatorname{O}\left(\max\limits_{i \in [n]} \tau_i \times \left(\frac{L \Delta}{\varepsilon} + \frac{\sigma^2 L \Delta}{n \varepsilon^2}\right)\right)
\end{align} seconds because this method waits for the slowest worker with the time $\max_{i \in [n]} \tau_i.$ Using \algname{Asynchronous SGD}, the time complexity \eqref{eq:mini_batch_compl} can be improved to 
\begin{align}
  \label{eq:async_sgd_complexity}
  \squeeze
  \textstyle \operatorname{O}\left(\left(\frac{1}{n} \sum \limits_{i=1}^n \frac{1}{\tau_i}\right)^{-1}\left(\frac{L \Delta}{\varepsilon} + \frac{\sigma^2 L \Delta}{n \varepsilon^2}\right)\right),
\end{align}
where the dependence on the processing times is harmonic \citep{mishchenko2022asynchronous}. An alternative method that also achieves this time complexity is \algname{Picky SGD} \citep{cohen2021asynchronous}.

Subsequently, \citet{tyurin2023optimal} formalized the notion of time complexity using the \emph{time oracle protocol}, and under the assumption that worker $i$ requires at most $\tau_i$ seconds to calculate one stochastic gradient for all $i \in [n],$ proved that the time complexity lower bound is
\begin{align}
  \label{eq:orig_rennala}
  \textstyle \Theta\left(\min\limits_{m \in [n]} \left[\left(\frac{1}{m} \sum\limits_{i=1}^{m} \frac{1}{\tau_{\pi_i}}\right)^{-1} \left(\frac{L \Delta}{\varepsilon} + \frac{\sigma^2 L \Delta}{m \varepsilon^2}\right)\right]\right)
\end{align}
seconds in the homogeneous setup, where $\pi$ is a permutation that sorts $\tau_i:$ $\tau_{\pi_1} \leq \dots \leq \tau_{\pi_n}.$ Moreover, they developed a new method, \algname{Rennala SGD}, that achieves the lower bound in the homogeneous setup. Under the bounded computation and communication assumptions, \citet{tyurin2024shadowheart,tyurin2024optimal} provided optimal time complexities in a setup where the communication time between the workers cannot be ignored.


\begin{figure}[t]
  \begin{minipage}[H]{0.3\textwidth}
    \includegraphics[width=\textwidth]{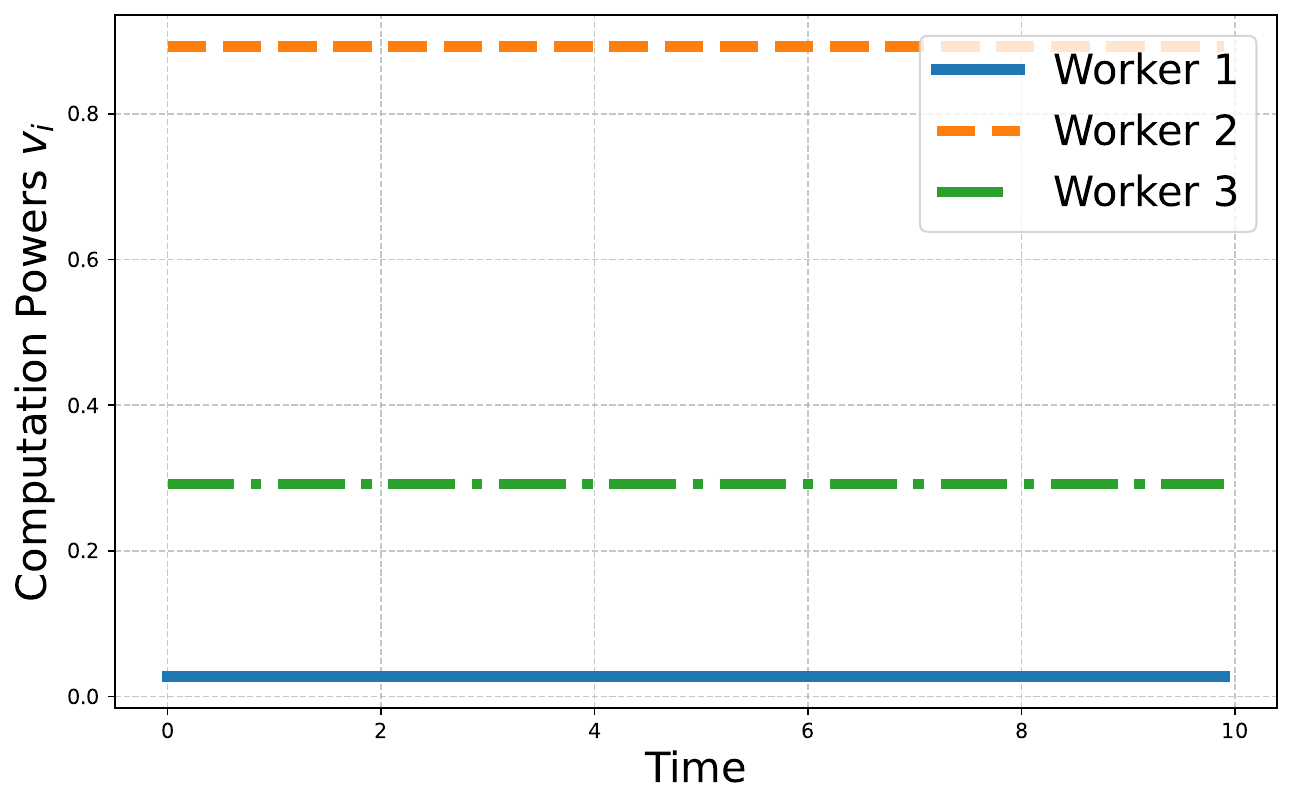}
  \end{minipage}\hfill
  \begin{minipage}[c]{0.63\textwidth}
    \caption{{\emph{Fixed Computation Model}: The previous computation paradigm \citep{mishchenko2022asynchronous} assumes that the performances/powers of the workers remain constant over time. \citet{tyurin2023optimal} established the optimal time complexities \eqref{eq:NmWzjXHGl} and \eqref{eq:opt_heter_fixed} for this paradigm.}
    } \label{fig:03-03}
  \end{minipage}
\end{figure}


\begin{figure}[t]
  {
  \centering
  \subfigure[Irregular Powers]{
      \includegraphics[width=0.3\textwidth]{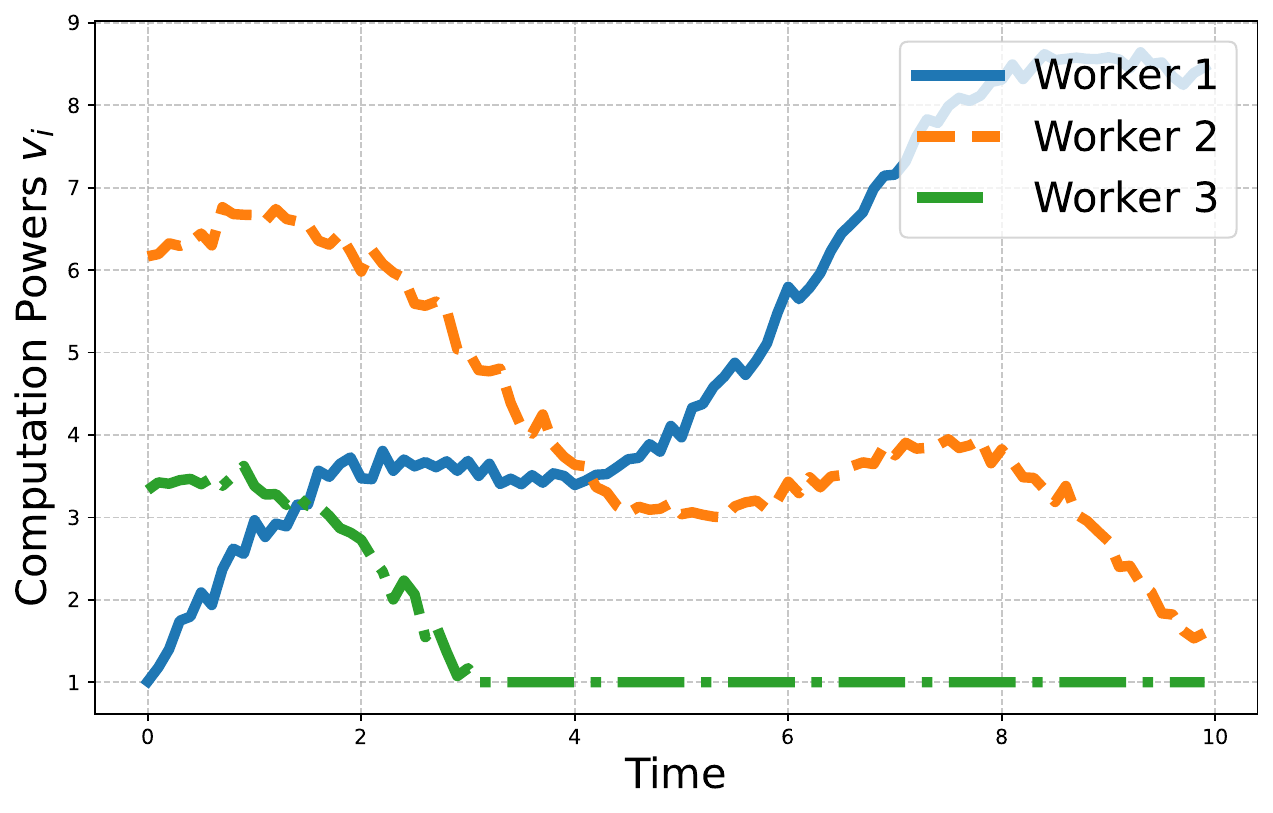}
      \label{fig:1}
  }
  \subfigure[Periodic Powers]{
      \includegraphics[width=0.3\textwidth]{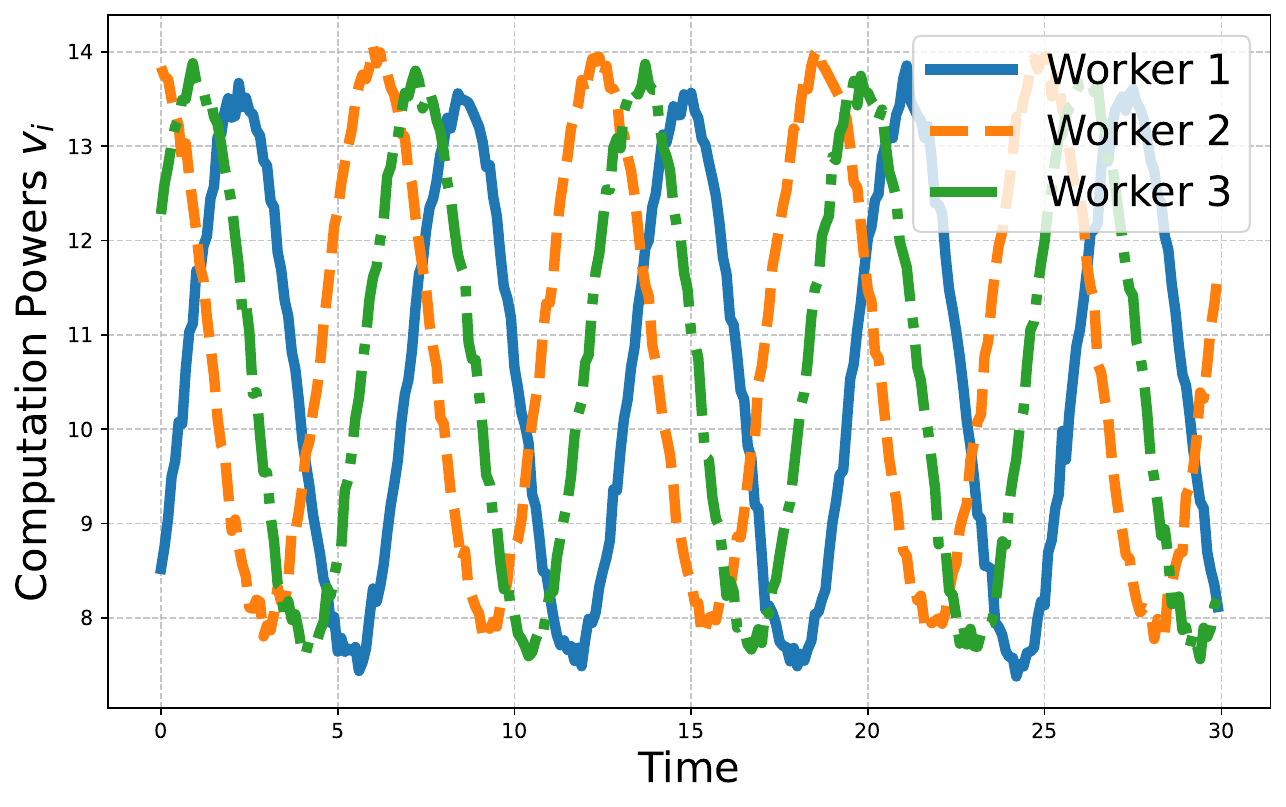}
      \label{fig:2}
  }
  \subfigure[Random Outages]{
      \includegraphics[width=0.3\textwidth]{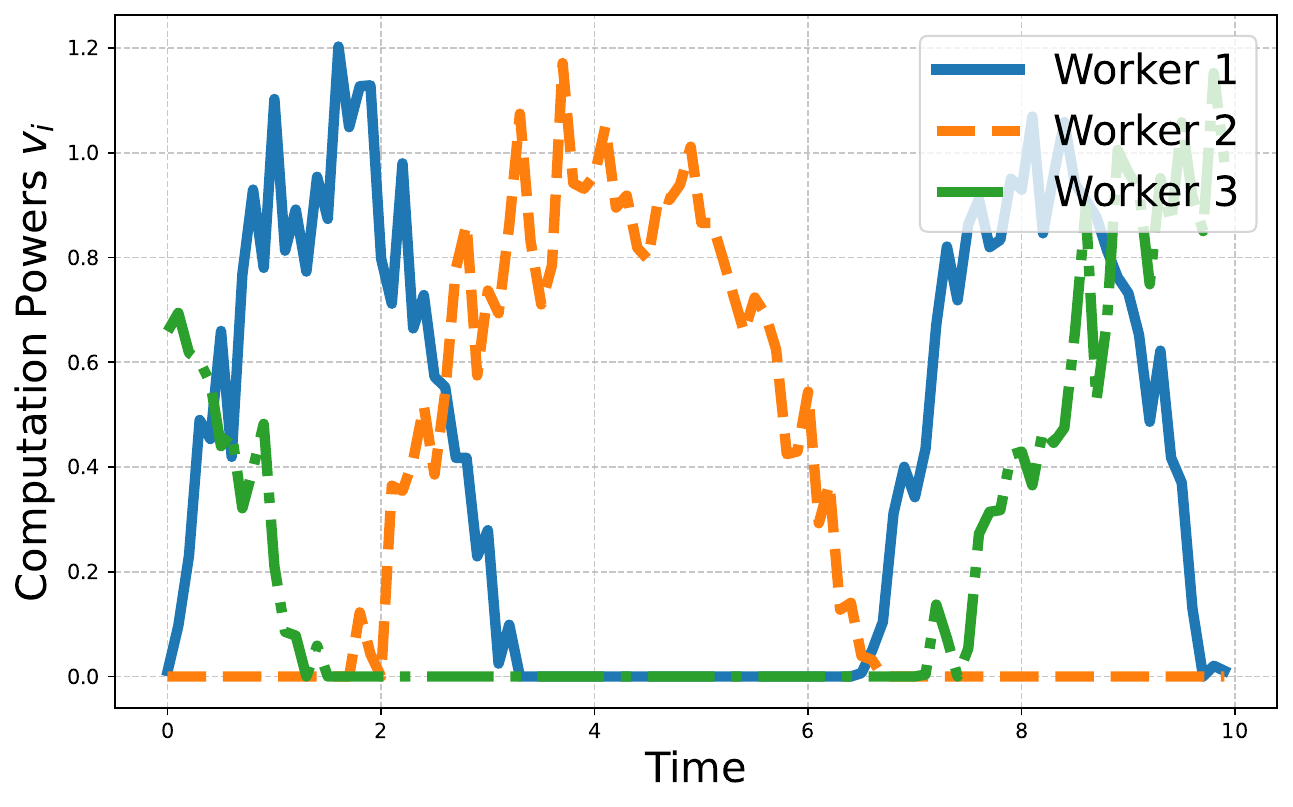}
      \label{fig:3}
  }
  \caption{
    {\emph{Universal Computation Model}: A new computation paradigm that captures virtually all possible computation scenarios. The three subplots present illustrative and non-exhaustive examples of irregular $\{v_i\}$ (Fig.~\ref{fig:1}), periodic noisy powers $\{v_i\}$ (Fig.~\ref{fig:2}), and random outages of the workers, where $v_i$ equals $0$ periodically (Fig.~\ref{fig:3}). For all possible scenarios, we establish optimal time complexities (see Theorems~\ref{theorem:random_lower_bound}, \ref{cor:max_time_params}, \ref{theorem:random_lower_bound_heter_random}, and \ref{cor:max_time_params_heter}). It is possible to get interpretable and explicit formulas for the optimal time complexities in some scenarios (see Examples~\ref{ex:simple}, \ref{ex:simple_compl}, \ref{eq:example_share}, \ref{eq:simple_heter}, and \ref{eq:example_share_heter}). However, for Fig.~\ref{fig:1}, Fig.~\ref{fig:2}, and Fig.~\ref{fig:3}, it is arguably intractable to find $\bar{t}_{\ceil{L \Delta / \varepsilon}}$ analytically. Instead, we can easily do it numerically in Fig.~\ref{fig:1} and get the optimal time complexities $6.57$ and $13.02$ sec with $L \Delta / \varepsilon = 10$ and $\sigma^2 / \varepsilon = 100$ in the homogeneous and heterogeneous settings, respectively (Fig.~\ref{fig:2}: $2.34$ and $2.53$ sec; Fig.~\ref{fig:3}: $77.04$ and $84.62$ sec).}}
  \label{fig:worker_powers}
  }
\end{figure}

\section{Contributions}

In this work, we aim to determine the optimal time complexities of distributed stochastic optimization with parallel and asynchronous methods in scenarios where the computational performance of workers can be arbitrarily heterogeneous and variable. We want to capture all possible cases, including random outages, time-changing computation performances, and slow and straggler workers.

$\spadesuit$ We consider a new computation paradigm, for which we coin the name \emph{universal computation model}, that includes virtually all possible computation scenarios that can appear in practical distributed, parallel, and asynchronous optimization environments.

$\clubsuit$ Using the universal computation model, we prove a tight lower bound for time complexities of parallel and asynchronous optimization methods in the homogeneous setting, which is matched, up to constants, by our Theorem~\ref{cor:max_time_params}, saying that \algname{Rennala SGD} \citep{tyurin2023optimal} is optimal.

$\vardiamond$ We also close the problem in the heterogeneous setup. We discover the optimal time complexity and prove the optimality of \algname{Malenia SGD} \citep{tyurin2023optimal} (see Theorem~\ref{cor:max_time_params_heter}). The proofs of the lower bounds are based on new proof techniques and constructions (see Section~\ref{sec:proof_techniques} for an overview).

$\varheart$ In Section~\ref{sec:convex}, we provide time complexities of \algname{(Accelerated) Rennala SGD} and \algname{(Accelerated) Malenia SGD} in the \emph{convex setting}.

\section{Universal Computation Model}
\label{sec:setup}

To achieve the goal of finding the optimal time complexities for the stochastic optimization problem under any heterogeneous asynchronous computation setup, we first have to formalize the computation model of the workers.
To formalize all possible cases, we propose using the following computation model, called the \emph{universal computation model}.

For all $i \in [n],$ we consider a non-negative \emph{continuous almost everywhere} function $v_i \,:\, \R_{+} \rightarrow \R_{+}$\footnote{Notations: $\R_{+} \eqdef [0, \infty),$ $\R^{\infty}_{+} \eqdef [0, \infty],$ $\N \eqdef \{1, 2, 3, \dots\},$ $\N_0 \eqdef \{0, 1, 2, \dots\},$ $[n] \eqdef \{1, \dots, n\}.$} called a \emph{computation power} of worker $i.$
\begin{assumption}
  \label{ass:speed}
  For all $i \in [n],$ $v_i$ is non-negative continuous almost everywhere.
\end{assumption}
Without loss of generality, we assume that the time starts from \emph{zero}.
Using the same reasoning as in physics, where the energy is the integral of power, in our domain, the number of stochastic gradients that worker $i$ can calculate from a time $t_0$ to a time $t_1$ is the Riemann integral\footnote{It is possible to consider the Lebesgue integral and assume that the functions $\{v_i\}$ are measurable, but we will stick with the Riemann integral for simplicity.} of the computation power $v_i$ followed by the floor operation (because we can not partially calculate a stochastic gradient):
\begin{align}
  \label{eq:required_time}
  \textnormal{``\# of stoch. grad. by worker $i$ in $[t_0, t_1]$''} = \flr{\int_{t_0}^{t_1} v_i(\tau) d \tau} = \flr{V_i(t_1) - V_i(t_0)},
\end{align}
where we additionally define a mapping $V_i \,:\, \R^{\infty}_{+} \rightarrow \R_{+}^{\infty}$ such that
\begin{align}
  \label{eq:UGOxJcvwNftDHY}
  V_i(t) \eqdef \int_{0}^{t} v_i(\tau) d \tau.
\end{align}
For $t_1 \geq t_0,$ $V_i(t_1) - V_i(t_0)$ is called a \emph{computation work} of worker $i$ from a time $t_0$ to a time $t_1.$ \\
\begin{example}[Fixed Computation Model]
\label{ex:simple}
Let us consider the simplest example and take the performances that do not change through time, i.e., $v_i(t) = v_i \in \R_{+}$ for all $t \geq 0.$ If we take $t_0 = 0,$ then
\begin{align}
  \label{eq:GhlOQaoeNKr}
  \flr{\int_{t_0}^{t_1} v_i(\tau) d \tau} = \flr{V_i(t_1) - V_i(t_0)} = \flr{v_i t_1}.
\end{align}
This formula formalizes simple logic that it takes $1 / v_i$ seconds to find one stochastic gradient in worker $i$ because $\flr{v_i \times 1 / v_i} = 1,$ $2 / v_i$ seconds to find two stochastic gradients in worker $i$ because $\flr{v_i \times 2 / v_i} = 2,$ and so forth. The higher the power $v_i$, the less time it takes to find a new stochastic gradient. {\eqref{eq:GhlOQaoeNKr} provides the number of stochastic gradients under the fixed computation model when reparametrized with $v_i = \nicefrac{1}{\tau_i}.$ However, the fixed computation model is limited; for a visual comparison, see Figures~\ref{fig:03-03} and \ref{fig:worker_powers}.} Section~\ref{sec:dis_examples} has more examples.
\end{example} 
\begin{theorem}[e.g. \citep{bartle2000introduction}]
  For all $i \in [n],$ $V_i$ is continuous and non-decreasing on $\R_{+}$ if $v_i$ is non-negative continuous almost everywhere (Assumption~\ref{ass:speed}).
\end{theorem}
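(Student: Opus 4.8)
The plan is to prove the two asserted properties separately, each reducing to an elementary fact about the Riemann integral (exactly the kind collected in \citep{bartle2000introduction}), after first recording that $V_i(t)$ is well defined and finite for every $t \geq 0$.

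\textbf{Well-definedness and monotonicity.} Fix $i \in [n]$ and $T > 0$. On the compact interval $[0,T]$ the function $v_i$ is bounded (this is implicit in working with the Riemann integral; see the obstacle below) and, by Assumption~\ref{ass:speed}, continuous almost everywhere, so by Lebesgue's criterion for Riemann integrability $v_i$ is Riemann integrable on $[0,T]$; hence $V_i(t) = \int_0^t v_i(\tau)\,d\tau$ is a finite real number for all $t \in [0,T]$, and since $T$ is arbitrary, for all $t \in \R_{+}$. For monotonicity, take any $0 \leq t_0 \leq t_1$; since $v_i \geq 0$ pointwise, monotonicity of the integral gives $V_i(t_1) - V_i(t_0) = \int_{t_0}^{t_1} v_i(\tau)\,d\tau \geq 0$, i.e. $V_i(t_1) \geq V_i(t_0)$, so $V_i$ is non-decreasing on $\R_{+}$.

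\textbf{Continuity.} Fix $T > 0$ and set $M_T \eqdef \sup_{\tau \in [0,T]} v_i(\tau) < \infty$. For any $0 \leq s \leq t \leq T$, combining the two observations above, $0 \leq V_i(t) - V_i(s) = \int_s^t v_i(\tau)\,d\tau \leq M_T (t - s)$, so $V_i$ is $M_T$-Lipschitz on $[0,T]$ and in particular continuous there. Since $T$ was arbitrary, $V_i$ is continuous on all of $\R_{+}$. (Alternatively one could invoke the Fundamental Theorem of Calculus in integral form, but the Lipschitz bound is shorter and self-contained.)

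\textbf{Main obstacle.} There is no genuine difficulty here; the only point deserving care is that "continuous almost everywhere" by itself does not make the integral $V_i(t)$ well defined — one also needs $v_i$ to be locally bounded so that Lebesgue's criterion applies and $M_T$ is finite. Under the paper's standing convention of using the Riemann integral this local boundedness is the natural reading of the hypothesis; if instead $v_i$ were allowed to be unbounded, one would work with improper (or Lebesgue) integrals and replace the Lipschitz estimate by absolute continuity of $t \mapsto \int_0^t v_i$, but the conclusion — continuity and monotonicity of $V_i$ — would be unchanged.
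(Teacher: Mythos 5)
Your proof is correct: the paper gives no proof of this statement at all, simply citing a standard real-analysis text, and your argument (Lebesgue's criterion for Riemann integrability, monotonicity of the integral for $v_i \geq 0$, and a local Lipschitz bound for continuity) is exactly the textbook route being invoked. Your remark that ``continuous almost everywhere'' must implicitly be paired with local boundedness for the proper Riemann integral to exist is a fair and accurate reading of the paper's convention, and it does not affect the conclusion.
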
 
Notice this theorem can hold even if $v_i$ is \emph{discontinuous}.
In general, the computation powers $\{v_i\}$ can be even random. Indeed, we can assume that $v_i \,:\, \R_{+} \times \Omega \to \R_{+}$ is a \emph{stochastic computation power}, where $\Omega$ a sample space of a probability space. Then, all the following results hold when conditioned over all randomness in $\{v_i\},$ assuming that the sources of randomness are statistically independent. Without loss of generality, we continue assuming that $v_i \,:\, \R_{+} \rightarrow \R_{+}$ for all $i \in [n].$ \
For all $i \in [n],$ let us define the generalized inverse function\footnote{We use the standard convention $\min \{\emptyset\} = \infty.$} $V_i^{-1} \,:\, \R^{\infty}_{+} \rightarrow \R_{+}^{\infty}$ such that
\begin{align}
  \label{eq:inv}
  \textstyle V_i^{-1}(S) = \min\left\{t \geq 0 \, : \, V_i(t) = S\right\}
\end{align}
for all $S \in \R^{\infty}_{+}.$ If $V_i$ is strongly increasing, then $V_i^{-1}$ is the standard inverse function of $V_i.$

\section{Preliminaries}
Before presenting our main results, we formalize the concepts of time and time complexity. This section is somewhat technical; readers not interested in the proof details may skip ahead to Sections~\ref{sec:homog_lower_bound} and \ref{sec:heter_lower_bound}.

Recall that in the classical approach of deriving lower bounds, we examine the following protocol \citep{nemirovskij1983problem,nesterov2018lectures,carmon2020lower}:
\begin{protocol}[H]
  \caption{Classical Oracle Protocol}
  \label{alg:classical_oracle_protocol}
  \begin{algorithmic}[1]
  \STATE \textbf{Input: }function $f \in \cF,$ oracle $O \in \cO(f),$ algorithm $A \in \cA$
  \FOR{$k = 0, \dots, \infty$}
  \STATE $x^k = A^k(g^1, \dots, g^{k})$ \hfill $\rhd$ {\scriptsize $x^0 = A^0$ for $k = 0.$}
  \STATE $g^{k+1} = O(x^k)$
  \ENDFOR
  \end{algorithmic}
\end{protocol}
Where we want to find the worst-case oracle complexity formalized by 
\begin{align*}
  \textstyle \inf\limits_{A \in \cA} \sup\limits_{f \in \cF} \sup\limits_{O \in \cO(f)} \inf\left\{k \in \N\,\middle|\,\Exp{\norm{\nabla f(x^k)}^2} \leq \varepsilon\right\}.
\end{align*}

Protocol~\ref{alg:classical_oracle_protocol} is a reasonable way to establish lower bounds and compare algorithms, but it is not convenient for analyzing parallel algorithms. In order to analyze parallel and asynchronous algorithms, \citet{tyurin2023optimal} proposed to use the time multiple oracles protocol:
\begin{protocol}[H]
  \caption{Time Multiple Oracles Protocol}
  \label{alg:time_multiple_oracle_protocol}
  \begin{algorithmic}[1]
  \STATE 
  \textbf{Input: }function $f$ (or functions $f_i$), oracles $\{O_{i}\}_{i=1}^{n} \in \mathcal{O}(f),$ algorithm $A = \{A^k\}_{k=0}^{\infty}~\in~\cA$
  \STATE $s^{0}_i = 0$ for all $i \in [n]$ 
  \FOR{$k = 0, \dots, \infty$}
  \STATE $(t^{k+1}, i^{k+1}, c^{k+1}, x^k) = A^k(g^{1}, \dots, g^{k})$ \hfill $\rhd \,{t^{k+1} \geq t^k}$ \\
  \STATE $(s^{k+1}_{{i^{k+1}}}, g^{k+1}) = O_{{i^{k+1}}}({t^{k+1}}, x^k, s^{k}_{{i^{k+1}}}, c^{k+1})$ \hfill $\rhd \forall j \neq i^{k+1}: \,s^{k+1}_j = s^{k}_j$
  \ENDFOR
  \end{algorithmic}
\end{protocol}

Unlike the classical protocol  where an algorithm returns a new point $x^k$ based on the current information $g^{1}, \dots, g^{k},$ this protocol requires an algorithm to return a time $t^{k+1},$ an index of an oracle $i^{k+1}$, a control variable $c^{k+1}$, and a new point $x^k.$ In the parallel setting, algorithms have access to many workers/oracles. In every iteration, an algorithm has the freedom to choose any oracle using $i^{k+1},$ and call the oracle at a point $x^k.$ The role of control variables $\{c^{k+1}\}$ will be clear later.

\emph{The main idea is that an algorithm controls time and decides when it is ready to go forward using a time sequence $\{t^{k+1}\}.$} Let us introduce an oracle that emulates the behavior of a real worker, and then we will provide clarifications. For all $i \in [n],$ we consider the mapping

\begin{align*}
  O_{i}\,:\, &\underbrace{\R_{+}}_{\textnormal{time}} \times \underbrace{\R^d}_{\textnormal{point}} \times \underbrace{(\R_{+} \times \R^d \times \{0, 1\})}_{\textnormal{input state}} \times \hspace{-0.3cm} \underbrace{\{0, 1\}}_{\textnormal{stop computation}} \rightarrow \underbrace{(\R_{+} \times \R^d \times \{0, 1\})}_{\textnormal{output state}} \times \hspace{-0.3cm}\underbrace{\R^d}_{\textnormal{output vector}} \textnormal{such that}
\end{align*}
\begin{align}
  \label{eq:oracle_inside_random_heter}
  \begin{split}
      &O_{i}(t, x, (s_t, s_x, s_q), c) = \left\{
  \begin{aligned}
      &((t, x, 1), &0), \quad & c = 0, s_q = 0, \\
      &((s_t, s_x, 1), &0), \quad & c = 0, s_q = 1, V_i(t) - V_i(s_t) < 1,\\
      &((0, 0, 0), & g_i(s_x; \xi, t )), \quad & c = 0, s_q = 1, V_i(t) - V_i(s_t) \geq 1,\\
      &((0, 0, 0), &0), \quad & c = 1,
  \end{aligned}
  \right.
  \end{split}
\end{align}
where $\mathcal{D}_{s_x,t,i}$ is some distribution that can depend on $s_x,$$t$, $\xi \sim \mathcal{D}_{s_x,t,i},$ and $g_i \,:\, \R^d \times \mathbb{S}_{\xi} \times \R_{+} \to \R^d$ is a mapping. This oracle can return different outputs depending on an input it receives: i) if $c = 0, s_q = 0,$ then the oracle is only starting the calculation of a stochastic gradient, and it memorizes the time when it was called in the variable $s_t$; ii) if $c = 0, s_q = 1, V_i(t) - V_i(s_t) < 1,$ then the oracle is still calculating; iii) if $c = 0, s_q = 1, V_i(t) - V_i(s_t) \geq 1,$ then the oracle has finished the calculation and returns $g_i(s_x; \xi, t)$ at the point $s_x$ where the calculation was initialized. The condition $V_i(t) - V_i(s_t) \geq 1$ means that ``at the current time $t,$ the oracle is ready to return the stochastic gradient that began to be calculated at time $s_t$.''
The control variable $c$ allows algorithms to stop the calculations at any time they want if they pass $c = 1.$


The oracles \eqref{eq:oracle_inside_random_heter} force algorithms to increase times; otherwise, they will not get stochastic gradients and enough information to find an $\varepsilon$--stationary point. Using Protocol~\ref{alg:time_multiple_oracle_protocol}, we consider the time complexity measure
\begin{align}\label{eq:lower_compl_time}
    \begin{split}
    \squeeze
    &\textstyle \inf\limits_{A \in \cA} \sup\limits_{f \in \cF} \sup\limits_{(O, \mathcal{D}) \in \cO(f)} \inf\left\{t \geq 0\,\middle|\,\Exp{ \inf\limits_{k \in S_t}\norm{\nabla f(x^k)}^2} \leq \varepsilon \right\}, \textstyle S_t \eqdef \left\{k \in \N_0 \middle| t^{k} \leq t\right\}
\end{split}
\end{align}
where the sequences $t^k$ and $x^k$ are generated by Protocol~\ref{alg:time_multiple_oracle_protocol}. This measure takes algorithm and function classes and returns the worst-case time complexity. 
We refer to \citep{tyurin2023optimal}[Sections 3--5] for more details.

In this work, we consider zero-respecting algorithms formalized by the definition below.
\begin{definition}[Algorithm Class $\cA_{\textnormal{zr}}$]
  Let us consider Protocol~\ref{alg:time_multiple_oracle_protocol}. We say that an algorithm $A = \{A^k\}_{k=0}^{\infty}$ is a zero-respecting algorithm, if 
  \begin{enumerate}
    \item $A^k\,:\, \underbrace{\R^d \times \dots \times \R^d}_{k \textnormal{ times}} \rightarrow {\R_{\geq 0}} \times [n] \times \{0, 1\} \times \R^d \quad \forall k \geq 1, A^0 \in {\R_{\geq 0}} \times [n] \times \{0, 1\} \times \R^d,$
    \item $\textnormal{supp} \left(x^k\right) \subseteq \bigcup_{j = 1}^k \textnormal{supp} \left(g^j\right)$ for all $k \in \N_0,$ where $\textnormal{supp}(x) \eqdef \{i \in [d]\,|\,x_i \neq 0\},$
    \item for all $k \geq 1$ and $g^1, \dots, g^{k} \in \R^d,$ we have $t^{k+1} \geq t^{k},$ where $t^{k+1}$ and $t^{k}$ are defined as $(t^{k+1}, \cdot) = A^{k}(g^1, \dots, g^{k})$ and $(t^{k}, \cdot) = A^{k-1}(g^1, \dots, g^{k-1}).$
  \end{enumerate}
\end{definition}
The first condition defines the domain and range, the second condition is the definition of a zero-respecting algorithm \citep{carmon2020lower}, the third condition ensures that the algorithm return a non-decreasing sequence of times \citep{tyurin2023optimal}.

\section{Homogeneous Setup}


\label{sec:homog_lower_bound}

We are ready to present our lower bound in the homogeneous setup. Let us also provide a simplified and informal version of the theorem, followed by the formal one.
\begin{theorem*}[Informal Formulation of Theorem~\ref{theorem:random_lower_bound}]
  Let Assumptions~\ref{ass:lipschitz_constant}, \ref{ass:lower_bound}, \ref{ass:stochastic_variance_bounded}, and \ref{ass:speed} hold. It is impossible to converge faster than $\nicefrac{1}{2} \times \underline{t}_{\ceil{c_1 \times \frac{L \Delta}{\varepsilon}}}$ seconds, where the sequence $\{\underline{t}_k\}$ is defined in \eqref{eq:lower_bound_seq} and $c_1$ is a universal constant.
\end{theorem*}
\begin{restatable}{theorem}{RANDOMLOWERBOUNDFULL}
  \label{theorem:random_lower_bound}
  Consider Protocol~\ref{alg:time_multiple_oracle_protocol}. We take computation powers $\{v_i\}$ such that Assumption~\ref{ass:speed} holds, and fix $L, \Delta, \varepsilon > 0$ and $\sigma^2 \geq 0$ that satisfy the inequality $\varepsilon < c' L \Delta$. For any algorithm $A \in \cA_{\textnormal{zr}},$ there exist a function $f,$ which satisfy Assumptions~\ref{ass:lipschitz_constant}, \ref{ass:lower_bound} and $f(0) - f^* \leq \Delta$, and stochastic gradient mappings $\{g_i\}$ in \eqref{eq:oracle_inside_random_heter}, which satisfy Assumption~\ref{ass:stochastic_variance_bounded}, 
  i.e., $\ExpSub{\xi}{g_i(s_x; \xi, t)} = \nabla f(s_x)$ and ${\mathbb{E}_{\xi}}[\norm{g_i(s_x; \xi, t) - \nabla f(s_x)}^2] \leq \sigma^2$ for all $s_x \in \R^d, i \in [n]$
  such that ${\mathbb{E}}\big[\inf\limits_{k \in S_t} \norm{\nabla f(x^k)}^2\big]~>~\varepsilon$ holds, 
  where $S_t \eqdef \left\{k \in \N_0 \,|\,t^k \leq t\right\},$
$$\textstyle t = \frac{1}{2} \times \underline{t}_{\flr{c_1 \times \frac{L \Delta}{\varepsilon}}},$$
and\footnote{We use the standard convention $\min \{\emptyset\} = \infty.$}
\begin{align}
  \textstyle \underline{t}_k \eqdef \min\left\{t \geq 0 \, : \, \sum\limits_{i=1}^n \flr{V_i(t) - V_i(\underline{t}_{k - 1})} \geq c_2 \times \max\left\{\ceil{\frac{\sigma^2}{\varepsilon}}, 1\right\}\right\} \qquad (\underline{t}_{0} = 0)
  \label{eq:lower_bound_seq}
\end{align}
for all $k \geq 0.$ The quantities $c', c_1,$ and $c_2$ are universal constants. The sequences $x^k$ and $t^k$ are defined in Protocol~\ref{alg:time_multiple_oracle_protocol}.
\end{restatable}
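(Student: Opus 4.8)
My overall strategy is to reduce the time-complexity lower bound to the classical oracle-complexity lower bound of \citet{carmon2020lower,arjevani2022lower} by a counting argument on how many oracle calls can physically be completed by time $t$. The key observation is that the sequence $\{\underline{t}_k\}$ in \eqref{eq:lower_bound_seq} is engineered so that between consecutive "checkpoints" $\underline{t}_{k-1}$ and $\underline{t}_k$, the total computation work summed over all $n$ workers is roughly $c_2 \max\{\ceil{\sigma^2/\varepsilon},1\}$ stochastic gradients — i.e., exactly the batch size at which a mini-batch of stochastic gradients has variance $\lesssim \varepsilon$. So $\underline{t}_K$ with $K = \flr{c_1 L\Delta/\varepsilon}$ is the time by which the total work is only enough to emulate $K$ "good" full-batch gradient steps, and $K$ is below the classical lower bound for finding an $\varepsilon$-stationary point.

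First I would set up the hard instance: take the worst-case function $f$ from the classical deterministic lower bound construction for $L$-smooth functions with $f(0)-f^* \le \Delta$ (the Carmon–Duchi–Hinder–Sidford "chain" function, suitably scaled), which requires $\Omega(L\Delta/\varepsilon)$ gradient queries at progressively "deeper" coordinates before $\|\nabla f\|^2 \le \varepsilon$. For the stochastic gradient mappings $\{g_i\}$, I would use the standard probabilistic hiding trick: each $g_i(x;\xi,t)$ reveals the "next" progress coordinate only with probability $p \asymp \min\{1,\varepsilon/\sigma^2\}$, and otherwise returns the zero-respecting-safe truncated gradient, with the noise scaled so Assumption~\ref{ass:stochastic_variance_bounded} holds with the given $\sigma^2$. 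This makes it so that $\Theta(\sigma^2/\varepsilon)$ independent stochastic-gradient samples are needed (in expectation, and with constant probability) to advance the "progress" by one coordinate.

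Next comes the counting step, which I expect to be the main obstacle and where the control variable $c$ and the zero-respecting property must be handled carefully. I would argue: by time $t = \tfrac12 \underline{t}_K$, for any $k \le K$ the total number of stochastic gradients that \emph{any} algorithm can have received from all oracles is at most $\sum_{i=1}^n \flr{V_i(t) - V_i(0)}$, and more refined, using the definition of $\underline{t}_k$, the work available in each window $[\underline{t}_{k-1},\underline{t}_k)$ is controlled; hence the total work up to time $\tfrac12\underline{t}_K$ is strictly less than $K \cdot c_2\max\{\ceil{\sigma^2/\varepsilon},1\}$ (the factor $\tfrac12$ and the monotonicity/continuity of $V_i$ from the stated Theorem give this slack — I would need a short lemma that $\underline{t}_k$ is well-defined, finite or $+\infty$, and that halving the time strictly reduces the accumulated work below the $k$-th threshold). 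Combining with the previous paragraph: with the progress-coordinate mechanism, advancing to progress level $j$ requires, with probability $\ge 1/2$, at least $\Omega(j \cdot \sigma^2/\varepsilon)$ total stochastic-gradient samples that are "at or beyond" depth $j-1$; since zero-respecting algorithms cannot query deep coordinates before they are unlocked, the number of progress levels reached by time $t$ is $< c_1 L\Delta/\varepsilon$ with probability $\ge 1/2$, which is below the classical threshold, so $\mathbb{E}[\inf_{k \in S_t}\|\nabla f(x^k)\|^2] > \varepsilon$.

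Finally I would assemble the pieces: choosing the universal constants $c', c_1, c_2$ (and the hiding probability $p$) so that the classical lower bound of $\Omega(L\Delta/\varepsilon)$ progress levels, the $\Omega(\sigma^2/\varepsilon)$ samples-per-level bound, and the factor-$\tfrac12$ work slack are all mutually consistent, and handling the edge case $\sigma^2/\varepsilon < 1$ (where $\max\{\ceil{\sigma^2/\varepsilon},1\}=1$ and the instance degenerates to the essentially deterministic construction, recovering the $L\Delta/\varepsilon$ term alone). The condition $\varepsilon < c' L\Delta$ is needed so that $K = \flr{c_1 L\Delta/\varepsilon} \ge 1$ and the chain construction is nontrivial. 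The halving in $t = \tfrac12 \underline{t}_{K}$ is the technical device ensuring the "not enough work done yet" inequality is strict even though the floor functions and the $\min$ in \eqref{eq:lower_bound_seq} could otherwise be tight. I anticipate the delicate part is making the counting argument fully rigorous in the adaptive setting — the algorithm chooses $t^{k+1}$, $i^{k+1}$, $c^{k+1}$ adaptively based on past stochastic gradients — which I would handle by a filtration/stopping-time argument showing that, conditioned on the past, each new completed oracle call unlocks the next progress coordinate only with probability $p$, independently, so a Chernoff/martingale bound controls the total progress given the (deterministic, algorithm-independent) bound on the number of completed calls by time $t$.
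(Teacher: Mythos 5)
Your hard instance (scaled Carmon et al.\ chain function, Bernoulli masking of the next progress coordinate with probability $p\asymp\min\{1,\varepsilon/\sigma^2\}$, filtration argument for adaptivity) is exactly the paper's Part 1, but your central time-accounting step has a genuine gap. You propose a \emph{global work-budget} argument: that the total number of completed stochastic gradients by time $t=\tfrac12\underline{t}_K$ is strictly less than $K\cdot c_2\max\{\ceil{\sigma^2/\varepsilon},1\}$, and then divide by the $\Omega(\sigma^2/\varepsilon)$ samples needed per level. That claimed bound is false in general, because the windowed, floored sums in \eqref{eq:lower_bound_seq} reset at each $\underline{t}_{k-1}$ and therefore do not control $\sum_i\flr{V_i(t)}$: take $n$ equal constant-speed workers and $\sigma^2\le\varepsilon$ (so the threshold is $c_2$); then each window has length one gradient per worker, the total completed work by $\underline{t}_K$ is about $nK\gg c_2K$, and already by $\tfrac12\underline{t}_K$ it is about $nK/2$. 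In this same regime your division step also gives nothing: with $p\approx1$ each level needs only $O(1)$ samples, so ``budget divided by samples-per-level'' allows $\sim nK/2\ge K$ levels and yields no contradiction, whereas the true obstruction is that progress is \emph{sequential} --- the samples that unlock level $k$ must be computed \emph{after} level $k-1$ is unlocked and at the current frontier, so parallel workers cannot pool fractional or pre-unlock work. Precisely this sequential structure is what makes $\underline{t}_K$ (the quantity you must certify) much larger than the time at which a global budget of $K\cdot c_2\max\{\ceil{\sigma^2/\varepsilon},1\}$ gradients is exhausted, so a global counting argument cannot prove the theorem.

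The paper's proof replaces the budget argument by a recursive, per-level accounting anchored at \emph{random} unlock times: $t_k\eqdef\min\{t\ge0:\sum_{i=1}^n\flr{V_i(t)-V_i(t_{k-1})}\ge\eta_k\}$, where the $\eta_k$ are (conditionally) geometric with parameter $p$, since all workers together must wait for a successful Bernoulli draw at the frontier. A Chernoff-type lemma (Lemma~\ref{lemma:concentration_sum}) shows that with probability at least $1/2$ at least $\flr{(T-1)/2}$ of the $\eta_k$ exceed $1/(4p)$, and then an induction using only monotonicity of the $V_i$ shows $t_{j_k}\ge\underline{t}_k$ along that subsequence, hence $t_T\ge\underline{t}_{\flr{c_1L\Delta/\varepsilon}}$ with probability $\ge1/2$. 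You would need to add exactly this recursive comparison (or an equivalent per-window argument conditioned on the unlock times) to make your plan work. Finally, note the factor $\tfrac12$ in $t=\tfrac12\underline{t}_{\flr{c_1L\Delta/\varepsilon}}$ is not a ``work slack'' device as you suggest: it only serves, together with $\norm{\nabla f(x)}^2>2\varepsilon$ before full progress, to turn the probability-$1/2$ event $\{t_T\ge\underline{t}_{\flr{c_1L\Delta/\varepsilon}}>t\}$ into $\Exp{\inf_{k\in S_t}\norm{\nabla f(x^k)}^2}>\varepsilon$.
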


Unlike most previous works (e.g., \citep{nesterov2018lectures,arjevani2022lower,tyurin2023optimal}), the obtained lower bound is \emph{implicit.} This, we believe, is expected due to the generality of our assumptions about the universal computation model. To find the lower bound, one must determine the minimum of the set in \eqref{eq:lower_bound_seq} one by one ($\underline{t}_1, \underline{t}_2, \underline{t}_3, \dots$) to get $\nicefrac{1}{2} \times \underline{t}_{\flr{c_1 \times L \Delta / \varepsilon}}.$ Computationally, this problem is not difficult since the function $\sum_{i=1}^n \flr{V_i(t) - V_i(\underline{t}_{k - 1})}$ is non-decreasing; thus, for instance, we can employ the bisection method. 

\subsection{Optimal algorithm}
\label{sec:dis_examples}

\begin{figure}[t]
\begin{minipage}[t]{0.5\textwidth}
\begin{method}[H]
  \caption{\algname{Rennala SGD} 
  }
  \label{alg:alg_server}
  \begin{algorithmic}[1]
  \STATE \textbf{Input:} point $x^0$, stepsize $\gamma$, batch size $S$
  \FOR{$k = 0, 1, \dots, K - 1$}
  \STATE Ask all workers to calculate stochastic gradients at $x^k$
  \STATE Init $g^k = 0$ and $s = 1$
  \WHILE{$s \leq S$}
  \STATE Wait for the next worker
  \STATE Receive a calculated stochastic gradient $\nabla f(x^k;\xi^k_s)$
  \STATE $g^k = g^k + \frac{1}{S} \nabla f(x^k;\xi^k_s)$; \;\; $s = s + 1$
  \STATE Ask this worker to calculate a stochastic gradient at $x^k$
  \ENDWHILE
  \STATE $x^{k+1} = x^k - \gamma g^{k}$ \alglinelabel{alg:step_homog}
  \STATE Stop all the workers' calculations
  \ENDFOR
  \end{algorithmic}
  {(In practice, instead of $x^{k+1} = x^k - \gamma g^{k}$ (\begin{NoHyper}Line~\ref{alg:step_homog}\end{NoHyper}), one can use any other update technique, including \algname{ADAM} \citep{kingma2014adam}, AdaGrad \citep{duchi2011adaptive}, and \algname{SGD} with momentum \citep{polyak1964some,nesterov1983method})}
\end{method}
\end{minipage}
\begin{minipage}[t]{0.5\textwidth}
\begin{method}[H]
  \caption{\algname{Malenia SGD} 
  }
  \label{alg:alg_server_heterog}
  \begin{algorithmic}[1]
  \STATE \textbf{Input:} point $x^0$, stepsize $\gamma$, parameter $S$
  \FOR{$k = 0, 1, \dots, K - 1$}
  \STATE Ask all workers to calculate stochastic gradients at $x^k$
  \STATE Init\textsuperscript{(a)} $g^k_i = 0$ and $B_i = 0$
  \WHILE{$\left(\frac{1}{n} \sum_{i=1}^n \frac{1}{B_i}\right)^{-1} < \frac{S}{n}$}
  \STATE Wait for the next worker $j$
  \STATE Update $B_j = B_j + 1$
  \STATE Receive a calculated stochastic gradient $\nabla f_j(x^k;\xi^k_{j,B_j})$
  \STATE $g^k_j = g^k_j + \nabla f_j(x^k;\xi^k_{j,B_j})$
  \STATE Ask this worker to calculate a stochastic gradient at $x^k$
  \ENDWHILE
  \STATE $g^{k} = \frac{1}{n} \sum_{i=1}^n \frac{1}{B_i} g^k_i$
  \STATE $x^{k+1} = x^k - \gamma g^{k}$ \alglinelabel{line:step_heter}
  \STATE Stop all the workers' calculations
  \ENDFOR
  \end{algorithmic}
  (a): In practice, worker $i$ can store $g^k_i$
\end{method}
\end{minipage}
\end{figure}

The natural question is whether the lower bound is tight. The answer is yes (as usual, up to constant factors). The lower bound can be matched by \algname{Rennala SGD} (Method~\ref{alg:alg_server}) \citep{tyurin2023optimal}. In every iteration, the method collects a batch of size $S,$ and performs a gradient-like step once the batch has been collected. The following result was proved in \citep{tyurin2023optimal}. The proof technique is simple and follows the classical analysis of \algname{SGD} \citep{ghadimi2013stochastic,khaled2020better} since the logic of \algname{Rennala SGD} is equivalent to the steps $x^{k+1} = x^{k} - \nicefrac{\gamma}{S} \sum_{i=1}^{S} \nabla f(x^k;\xi_i),$ where $\{\xi_i\}$ are i.i.d. samples.

\begin{restatable}{theorem}{THEOREMMAINSTATIC}[\citep{tyurin2023optimal}]
  \label{thm:sgd_homog}
  Let Assumptions~\ref{ass:lipschitz_constant}, \ref{ass:lower_bound}, and \ref{ass:stochastic_variance_bounded} hold. We take
  $\gamma = 1 / 2 L$ and batch size $S = \max\{\ceil{\nicefrac{\sigma^2}{\varepsilon}}, 1\}$ in Method~\ref{alg:alg_server}.
  For all
    $K \geq 24 L \Delta / \varepsilon,$
 we get $\frac{1}{K}\sum_{k=0}^{K-1}\Exp{\norm{\nabla f(x^k)}^2} \leq \varepsilon.$
\end{restatable}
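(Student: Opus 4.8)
The plan is to reduce \algname{Rennala SGD} to plain minibatch \algname{SGD} with batch size $S$ and then run the textbook nonconvex descent argument. First I would observe that, whatever the computation powers $\{v_i\}$ do, the inner \textbf{while} loop of Method~\ref{alg:alg_server} terminates after exactly $S$ stochastic gradients have been received, every one of them evaluated at the \emph{same} point $x^k$ (the server never updates $x^k$ inside the loop) and every one an independent draw $\nabla f(x^k;\xi^k_s)$ from the oracle. Hence the iterates produced by the method satisfy $x^{k+1}=x^k-\gamma g^k$ with $g^k=\frac{1}{S}\sum_{s=1}^{S}\nabla f(x^k;\xi^k_s)$ and $\xi^k_1,\dots,\xi^k_S$ i.i.d.; the computation model only affects \emph{when} the step happens, not the statistics of $g^k$. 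By Assumption~\ref{ass:stochastic_variance_bounded} and independence, conditionally on $x^k$ we get $\Exp{g^k\mid x^k}=\nabla f(x^k)$ and $\Exp{\norm{g^k-\nabla f(x^k)}^2\mid x^k}\le \sigma^2/S$ (the cross terms vanish).

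Next I would invoke $L$-smoothness (Assumption~\ref{ass:lipschitz_constant}) in descent-lemma form, $f(x^{k+1})\le f(x^k)+\inp{\nabla f(x^k)}{x^{k+1}-x^k}+\frac{L}{2}\norm{x^{k+1}-x^k}^2$, substitute $x^{k+1}-x^k=-\gamma g^k$, and take conditional expectation to obtain
\[
\Exp{f(x^{k+1})\mid x^k}\le f(x^k)-\Big(\gamma-\tfrac{L\gamma^2}{2}\Big)\norm{\nabla f(x^k)}^2+\tfrac{L\gamma^2}{2}\cdot\tfrac{\sigma^2}{S}.
\]
With $\gamma=1/(2L)$ the coefficient $\gamma-\tfrac{L\gamma^2}{2}$ equals $\tfrac{3}{8L}$ and $\tfrac{L\gamma^2}{2}=\tfrac{1}{8L}$, so after taking full expectations, telescoping over $k=0,\dots,K-1$, and using $f(x^0)-f^*\le\Delta$ together with Assumption~\ref{ass:lower_bound}, I arrive at
\[
\frac{1}{K}\sum_{k=0}^{K-1}\Exp{\norm{\nabla f(x^k)}^2}\le \frac{8L\Delta}{3K}+\frac{\sigma^2}{3S}.
\]

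Finally I would plug in the prescribed batch size: since $S=\max\{\ceil{\sigma^2/\varepsilon},1\}\ge \sigma^2/\varepsilon$, the variance term is at most $\varepsilon/3$ (and is simply $0$ when $\sigma^2=0$), while $K\ge 24L\Delta/\varepsilon$ forces the first term below $\varepsilon/9$; hence the averaged squared gradient norm is at most $4\varepsilon/9<\varepsilon$, as claimed. I do not anticipate a real obstacle here: the one step deserving care is the reduction itself — checking that arbitrary, possibly adversarial computation dynamics cannot corrupt the collected batch, i.e., that each of the $S$ received gradients is an i.i.d. sample taken at the current iterate $x^k$ — after which the remainder is the standard \algname{SGD} analysis of \citep{ghadimi2013stochastic,khaled2020better}. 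The constants $24$ and $1/2$ are not tight; any $\gamma=\Theta(1/L)$ and $K=\Theta(L\Delta/\varepsilon)$ would suffice.
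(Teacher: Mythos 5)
Your proposal is correct and follows essentially the same route the paper indicates: Theorem~\ref{thm:sgd_homog} is cited from \citep{tyurin2023optimal}, and the paper explicitly notes that its proof reduces \algname{Rennala SGD} to the step $x^{k+1}=x^k-\tfrac{\gamma}{S}\sum_{i=1}^S\nabla f(x^k;\xi_i)$ with i.i.d.\ samples and then applies the classical \algname{SGD} analysis of \citep{ghadimi2013stochastic,khaled2020better}, which is exactly your reduction plus descent-lemma argument. Your constants check out ($\gamma-\tfrac{L\gamma^2}{2}=\tfrac{3}{8L}$, giving $\tfrac{8L\Delta}{3K}+\tfrac{\sigma^2}{3S}\le\tfrac{4\varepsilon}{9}<\varepsilon$), and you correctly flag the one point needing care, namely that the batch always consists of fresh i.i.d.\ gradients at the current $x^k$ because stale computations are stopped at the end of each iteration.
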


The following result is new and proves the time complexity of \algname{Rennala SGD}.

\begin{restatable}{theorem}{THEOREMMAINSTATICTIME}
  \label{cor:max_time_params}
  Consider the assumptions and the parameters from Theorem~\ref{thm:sgd_homog}, plus Assumption~\ref{ass:speed}. Then Method~\ref{alg:alg_server} (\algname{Rennala SGD}) converges after at most $\bar{t}_{\ceil{\frac{24 L \Delta}{\varepsilon}}}$ seconds, where
  \begin{align}
    \label{eq:homog_compl}
    \textstyle \bar{t}_k \eqdef \min\left\{t \geq 0 \, : \, \sum\limits_{i=1}^n \flr{V_i(t) - V_i(\bar{t}_{k-1})} \geq \max\left\{\ceil{\frac{\sigma^2}{\varepsilon}}, 1\right\}\right\} \qquad (\bar{t}_0 \equiv 0) \quad \forall k \geq 1.
  \end{align}
\end{restatable}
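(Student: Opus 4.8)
The plan is to reduce the time-complexity statement to the iteration-complexity statement already established in Theorem~\ref{thm:sgd_homog}. That theorem tells us that with the chosen stepsize and batch size $S=\max\{\ceil{\sigma^2/\varepsilon},1\}$, running \algname{Rennala SGD} for $K=\ceil{24L\Delta/\varepsilon}$ outer iterations suffices to reach an $\varepsilon$-stationary point (in the averaged sense). So the only thing left is to bound the \emph{wall-clock time} consumed by these $K$ iterations under the universal computation model, and show it is at most $\bar t_{K}$ with $\bar t_k$ as defined in \eqref{eq:homog_compl}.

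The key observation is that iteration $k$ of \algname{Rennala SGD} (0-indexed, say starting at wall-clock time $T_k$) asks all workers to compute stochastic gradients at $x^k$, and terminates as soon as a total of $S$ stochastic gradients have been collected across all workers; each worker computes continuously (restarting a fresh gradient as soon as it finishes one) until the batch is full. By the definition of the computation model in \eqref{eq:required_time}, the number of stochastic gradients worker $i$ can deliver during a time window $[T_k, t]$ is $\flr{V_i(t)-V_i(T_k)}$ (up to the bookkeeping subtlety discussed below). Hence the batch of size $S$ is guaranteed to be completed by the first time $t$ such that $\sum_{i=1}^n \flr{V_i(t)-V_i(T_k)} \ge S = \max\{\ceil{\sigma^2/\varepsilon},1\}$. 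That first time is exactly $\bar t_{k+1}$ when $T_k = \bar t_k$. So by induction on $k$, iteration $k$ finishes no later than time $\bar t_{k+1}$, and therefore all $K$ iterations finish by time $\bar t_{K} = \bar t_{\ceil{24L\Delta/\varepsilon}}$. Combining with Theorem~\ref{thm:sgd_homog} gives the claim.

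The steps, in order, are: (1) invoke Theorem~\ref{thm:sgd_homog} to fix $K=\ceil{24L\Delta/\varepsilon}$ as a sufficient iteration count; (2) set up the induction hypothesis ``iteration $k$ begins no later than $\bar t_k$'', with base case $\bar t_0=0$; (3) for the inductive step, argue that starting from any time $T_k \le \bar t_k$, the partial sums $\sum_i \flr{V_i(t)-V_i(T_k)}$ dominate $\sum_i \flr{V_i(t)-V_i(\bar t_k)}$ (monotonicity of $V_i$, since $T_k \le \bar t_k$ implies $V_i(T_k)\le V_i(\bar t_k)$, so the subtracted term is smaller), hence the collected-gradient count reaches $S$ by time $\bar t_{k+1}$ at the latest; (4) also note $\bar t_{k+1}$ is finite — this needs a word, because if all $v_i$ eventually vanish the minimum could be $+\infty$; presumably one either assumes it is finite or states the bound as a conditional $\bar t_K$-seconds statement, which is how \eqref{eq:homog_compl} is phrased with the $\min\{\emptyset\}=\infty$ convention; (5) conclude that the $K$-th iteration's gradient step is performed by time $\bar t_K$, so a point with the desired guarantee has been produced.

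The main obstacle — really the only subtle point — is the discrepancy between the idealized ``continuous integration of power'' accounting in \eqref{eq:required_time} and the actual discrete oracle mechanics in \eqref{eq:oracle_inside_random_heter}, where a worker that has been told to compute at $x^k$ but is mid-computation when the batch fills may ``waste'' the partial progress it had already made, and where \algname{Rennala SGD} restarts each worker at a fresh point $x^k$ at the start of iteration $k$ (throwing away at most one in-flight gradient per worker per iteration). I expect the proof to handle this by a straightforward slack argument: in the worst case each of the $n$ workers loses at most one gradient's worth of work at the iteration boundary, but since \algname{Rennala SGD} as analyzed simply needs \emph{some} $S$ fresh i.i.d. samples at $x^k$ and the definition of $\bar t_{k+1}$ already measures time from the \emph{restart} point $\bar t_k$, the bound $\sum_i \flr{V_i(t)-V_i(\bar t_k)} \ge S$ correctly captures the time to produce $S$ gradients all initiated at or after $\bar t_k$. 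So no extra slack is actually needed — the definition of $\bar t_k$ is already the ``restart-aware'' quantity — and the argument closes cleanly. A careful writeup would spell out exactly this correspondence between one WHILE-loop execution of Method~\ref{alg:alg_server} and one increment $\bar t_k \to \bar t_{k+1}$.
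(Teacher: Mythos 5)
Your proposal is correct and follows essentially the same route as the paper: invoke Theorem~\ref{thm:sgd_homog} for the iteration count $\ceil{24L\Delta/\varepsilon}$, then argue inductively that each batch of $S=\max\{\ceil{\sigma^2/\varepsilon},1\}$ gradients is collected by $\bar t_{k}$ because the workers in parallel deliver $\sum_{i=1}^n\flr{V_i(t)-V_i(\bar t_{k-1})}$ gradients after the restart at $\bar t_{k-1}$. The only cosmetic difference is that the paper justifies attainment of the minimum in \eqref{eq:homog_compl} via Lemma~\ref{lemma:min_inf}, whereas you handle it through the $\min\{\emptyset\}=\infty$ convention.
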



Up to constant factors, Theorem~\ref{theorem:random_lower_bound} together with Theorem~\ref{cor:max_time_params} provide the tight time complexity for the problem \eqref{eq:main_problem} in the homogeneous setup. As we noted in Section~\ref{sec:homog_lower_bound}, the obtained result is implicit, we do not get a closed-form expression for $\bar{t}_{\ceil{24 L \Delta / \varepsilon}}$ in Theorem~\ref{cor:max_time_params}. That said, the sequence $\bar{t}_k$ is mathematically rigorous, and we can provide explicit formulas in some cases. Surprisingly, \algname{Rennala SGD} gets the optimal complexity automatically, without prior knowledge about the computation powers $\{v_i\}.$ Therefore, the absence of a closed-form expression is irrelevant for practical implementations and is only of theoretical interest.

\begin{restatable}{example}{EXAMPLEFIXED}[Fixed Computation Model]
  \label{ex:simple_compl}
  Consider Example~\ref{ex:simple} with $v_i(t) = v_i \in \R_{+}$ for all $t \geq 0, i \in [n].$ Then, for all $i \in [n],$ $V_i(t) = v_i t$ and
  \begin{align}
    \label{eq:NmWzjXHGl}
    \textstyle \bar{t}_{\ceil{\frac{24 L \Delta}{\varepsilon}}} = \Theta\left(\min\limits_{m \in [n]} \left(\frac{1}{m}\sum\limits_{i=1}^m v_{\pi_i}\right)^{-1} \left(\frac{L \Delta}{\varepsilon} + \frac{L \Delta \sigma^2}{m \varepsilon^2}\right)\right),
  \end{align}
  $\pi$ is a permutation such that $v_{\pi_1} \geq \dots \geq v_{\pi_n}.$ The proofs of the examples are in Section~\ref{sec:proof_examples}.
\end{restatable}

In Example~\ref{ex:simple}, we discuss that $\tau_i = \nicefrac{1}{v_i}$ is the time required to find one stochastic gradient in worker $i.$ If we reparametrize \eqref{eq:NmWzjXHGl} with $v_i = \nicefrac{1}{\tau_i}$, then we get the time complexity \eqref{eq:orig_rennala}. Thus, Example~\ref{ex:simple_compl} restores the optimal time complexity obtained by \citet{tyurin2023optimal} for the fixed computation model, where the smaller the computation times $\tau_i$ (the higher the computation powers $v_i$), the smaller the complexities. Notice that if $v_j$ is small enough for some worker $j,$ then it is possible that the complexity \eqref{eq:NmWzjXHGl} will not depend on $v_j,$ meaning that this worker potentially does not contribute to an optimization process because it is too slow.
We can immediately derive a more general result: 

\begin{restatable}{example}{EXAMPLETREND} [Nonlinear Trend]
  \label{eq:example_share}
  Assume that $v_i(t) = v_i \times g(t)$ with $v_i > 0$ for all $i \in [n]$ and a continuous almost everywhere positive\footnote{We can relax these assumptions to \emph{measurability and non-negativity}, but the proof will be more technical.} function $g(t) \,:\, \R_{+}^{\infty} \to \R_{+}.$ Then
  \begin{align}
    \label{eq:bwvFomtFlwGBlgeblr}
    \textstyle \bar{t}_{\ceil{\frac{24 L \Delta}{\varepsilon}}} = G^{-1} \left(c_1 \cdot \min\limits_{m \in [n]} \left(\frac{1}{m}\sum\limits_{i=1}^m v_{\pi_i}\right)^{-1} \left(\frac{L \Delta}{\varepsilon} + \frac{L \Delta \sigma^2}{m \varepsilon^2}\right)\right),
  \end{align}
  where $\pi$ is a permutation such that $v_{\pi_1} \geq \dots \geq v_{\pi_n},$ $G(t) \eqdef \int_{0}^{t} g(\tau) d \tau,$ and $c_1 \in [1/4, 4]$ (can depend on other parameters but is bounded).
\end{restatable}

Example~\ref{eq:example_share} illustrates many practical cases. For example, the computation powers can vary according to the function $g(t) = 1.01 + \sin(t)$, causing them periodically increase and decrease. Then $G(t) = 1.01 t - \cos t + 1,$ which is invertible, and we can obtain a formula for the optimal time complexity using \eqref{eq:bwvFomtFlwGBlgeblr}.

Let us consider an example where all workers have the same performances, but any worker can randomly shut down and, after a while, become available again. 
We could have chosen virtually any (even random) example, but for the sake of simplicity, let us consider the following example to gain a basic intuition.


\begin{restatable}{example}{EXAMPLERANDOM} [``Random'' Outages]
  \label{ex:random}
  Assume that 
  \begin{align}
    \label{eq:oWZYM}
    \textstyle v_i(t) = 
    \begin{cases}
      v, & t \in \bigcup\limits_{j = 1}^{\infty} [k_i (j - 1), (k_i (j - 1) + 1)] \\
      0, & \textnormal{otherwise},
    \end{cases},
  \end{align} $v > 0,$ $k_i \in \N,$ and $h_i > 0$ for all $i \in [n].$ 
  Then
  \begin{align}
    \label{eq:TIwYNisWRAivDMCfdAE}
    \textstyle \bar{t}_{\ceil{\frac{24 L \Delta}{\varepsilon}}} \approx \Theta \left(\min\limits_{m \in [n]} \left(\frac{1}{m}\sum\limits_{i=1}^m \frac{v}{k_{\pi_i}}\right)^{-1} \left(\frac{L \Delta}{\varepsilon} + \frac{L \Delta \sigma^2}{m \varepsilon^2}\right)\right),
  \end{align}
  where $\pi$ is a permutation such that $k_{\pi_1} \leq \dots \leq k_{\pi_n}.$
\end{restatable}
In this example, worker $i$ is active in the time intervals $[0, 1],$ $[k_j, k_j + 1],$ $[2 k_j, 2 k_j + 1],$ and so forth. The parameter $k_j$ characterizes how often the worker's outages occur. 

The formula \eqref{eq:TIwYNisWRAivDMCfdAE} says that the more worker $i$ is inactive (the larger $k_i$), the more time it takes to solve the problem. Due to the $\min$ operation in \eqref{eq:TIwYNisWRAivDMCfdAE}, the formula indicates that some workers can be ignored if their $k_i$ are too large.
In general, we could have analyzed \eqref{eq:oWZYM} with $\bigcup_{j = 1}^{\infty} [\textnormal{start}_{k,j}, \textnormal{end}_{k,j}],$ where the pairs $\{\textnormal{start}_{k,j}, \textnormal{end}_{k,j}\}$ are arbitrarily (random) values on $\R_+,$ but would get less interpretable formulas.

\section{Heterogeneous Setup}
\label{sec:heter_lower_bound}
We now consider the heterogeneous setup discussed in Section~\ref{sec:introduction}, and present our first lower bound:

\begin{restatable}{theorem}{RANDOMLOWERBOUNDFULLHETER}
  \label{theorem:random_lower_bound_heter}
  Consider Protocol~\ref{alg:time_multiple_oracle_protocol}. We take computation powers $\{v_i\}$ such that Assumption~\ref{ass:speed} holds, and fix $L, \Delta, \varepsilon > 0$ and $\sigma^2 \geq 0$ that satisfy the inequality $\varepsilon < c' L \Delta$. For any algorithm $A \in \cA_{\textnormal{zr}},$ there exist functions $\{f_i\}_{i=1}^n,$ where the function $f = \frac{1}{n} \sum_{i=1}^n f_i$ satisfies Assumptions~\ref{ass:lipschitz_constant}, \ref{ass:lower_bound} and $f(0) - f^* \leq \Delta$, and stochastic gradient mappings $\{g_i\}_{i=1}^{n}$ in \eqref{eq:oracle_inside_random_heter}, which satisfy Assumption~\ref{ass:stochastic_variance_bounded_heter}, 
  i.e., $\ExpSub{\xi}{g_i(s_x; \xi, t)} = \nabla f_i(s_x)$ and ${\mathbb{E}_{\xi}}[\norm{g_i(s_x; \xi, t) - \nabla f_i(s_x)}^2] \leq \sigma^2$ for all $s_x \in \R^d, i \in [n],$ and $t \geq 0,$
  such that ${\mathbb{E}}\big[\inf\limits_{k \in S_t} \norm{\nabla f(x^k)}^2\big] > \varepsilon$ holds, 
  where $S_t \eqdef \left\{k \in \N_0 \,|\,t^k \leq t\right\},$
  \begin{align*}
    \textstyle t = \frac{1}{2} \underline{t}_{\flr{\frac{c_1 \times \frac{L \Delta}{\varepsilon}}{\color{red}  \log \frac{L \Delta}{\varepsilon}}}}
  \end{align*}
  and 
  \begin{align}
    \textstyle \underline{t}_{k} \eqdef \min\left\{t \geq 0 \,:\,\left(\frac{1}{n}\sum_{i=1}^n \flr{c_3 \times \frac{V_{i}\left(t\right) - V_{i} (\underline{t}_{k - 1})}{\color{red} \log \left(\frac{L \Delta}{\varepsilon}\right)}}^{-1}\right)^{-1} \geq \max\left\{c_2 \times \frac{\sigma^2}{n \varepsilon}, 1\right\} \right\}
    \label{eq:iTLiJe}
  \end{align}
  for all $k \geq 1$ $(\underline{t}_{0} \equiv 0).$ The quantities $c', c_1,$ $c_2,$ and $c_3$ are universal constants. The sequences $x^k$ and $t^k$ are defined in Protocol~\ref{alg:time_multiple_oracle_protocol}.
\end{restatable}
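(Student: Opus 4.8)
### Proof proposal for Theorem~\ref{theorem:random_lower_bound_heter}

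\textbf{Overall strategy.}\noindent\textbf{Proof proposal.} The plan is to combine the probabilistic ``zero-chain'' construction of \citet{carmon2020lower,arjevani2022lower} with the universal-computation bookkeeping already used for the homogeneous bound (Theorem~\ref{theorem:random_lower_bound}), adapted to a \emph{heterogeneous} hard instance whose chain links are deliberately spread across all $n$ local functions. The key difference from the homogeneous case is that advancing one chain link must force \emph{every} worker to produce fresh information, so the per-round cost becomes the harmonic-mean expression appearing in \eqref{eq:iTLiJe}.

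\emph{Step 1 (hard instance).} Fix a round count $T = \Theta\big((L\Delta/\varepsilon)/\log(L\Delta/\varepsilon)\big)$ and build local functions $f_1,\dots,f_n$ of the form ``rescaled, randomly rotated Nesterov zero-chain'' so that $f=\frac1n\sum_i f_i$ is $L$-smooth, $f(0)-f^\ast\le\Delta$, and $\norm{\nabla f(x)}^2>\varepsilon$ whenever fewer than $T$ chain blocks lie in the support visited so far. The rotation/partition is chosen so that block $k$ of the chain can be ``activated'' only after \emph{each} worker $i$ has returned a gradient of $f_i$ exposing its piece of block $k-1$; this is what later produces a harmonic mean over $i$ and, in particular, forces $\underline t_k=\infty$ whenever some $v_i$ is eventually zero (which is correct: if worker $i$ never computes, $f_i$ is never learned and $f$ cannot be minimized). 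For the stochastic oracles $g_i$ in \eqref{eq:oracle_inside_random_heter} we mask the current ``frontier'' coordinate: each oracle call reveals it only with probability $p\asymp n\varepsilon/\sigma^2$ (and returns the true partial gradient otherwise), tuned so that Assumption~\ref{ass:stochastic_variance_bounded_heter} holds per worker and so that, in aggregate, $\Omega(\sigma^2/(n\varepsilon))$ harmonic-mean-counted samples are needed per round.

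\emph{Step 2 (progress is a probabilistic zero-chain).} Let $\Pi^k$ denote the highest chain block contained in $\bigcup_{j\le k}\operatorname{supp}(g^j)$. Using the definition of $\cA_{\textnormal{zr}}$ (zero-respecting, non-decreasing times) and the chain structure, $\Pi$ can increase only when, for the current frontier, each worker has returned an \emph{un}masked gradient. Since unmasking happens with probability $p$ per oracle call, a martingale/Chernoff argument shows: if in a time window worker $i$ performs $w_i$ oracle calls, then the probability that $\Pi$ advances in that window is small unless $\big(\frac1n\sum_i\flr{c_3 w_i/\log(L\Delta/\varepsilon)}^{-1}\big)^{-1}\gtrsim \max\{\sigma^2/(n\varepsilon),1\}$; the $\log(L\Delta/\varepsilon)$ buffer is exactly the amplification needed so that a union bound over all $T$ rounds still leaves a constant-probability event on which \emph{no} round advances, and the floor $\flr{\cdot}$ is what encodes ``a worker doing too little work contributes nothing.'' This matches the threshold and the $c_3/\log$ scaling in \eqref{eq:iTLiJe}.

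\emph{Step 3 (from oracle work to time).} Partition $\R_+$ into the rounds $[\underline t_{k-1},\underline t_k)$ given by \eqref{eq:iTLiJe}. By \eqref{eq:required_time}, worker $i$ makes at most $\flr{V_i(t)-V_i(\underline t_{k-1})}$ oracle calls during $[\underline t_{k-1},t)$, and the structure of the oracles \eqref{eq:oracle_inside_random_heter} (which penalize idling, as in the proof of Theorem~\ref{theorem:random_lower_bound}) means the algorithm gains nothing by not keeping workers busy. Hence for every $t<\underline t_k$ the harmonic-mean-counted work since $\underline t_{k-1}$ is below $\max\{c_2\sigma^2/(n\varepsilon),1\}$, so by Step~2 round $k$ does not complete before $\underline t_k$. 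Inducting over $k=1,\dots,T$, at time $t=\tfrac12\underline t_T$ we have $\Pi^j<T$ for all $j\in S_t$ with constant probability, so $\norm{\nabla f(x^j)}^2>\varepsilon$ there; converting this high-probability statement into $\E\big[\inf_{j\in S_t}\norm{\nabla f(x^j)}^2\big]>\varepsilon$ by adjusting the universal constants $c',c_1,c_2,c_3$ finishes the argument.

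\emph{Main obstacle.} The delicate point is Step~2--Step~3 in the \emph{universal} model: the ``rounds'' $\underline t_k$ are defined only implicitly, so one must verify that no non-uniform scheduling of the (arbitrary) computation powers $\{v_i\}$ beats the harmonic-mean-with-floors count, and that the probabilistic zero-chain martingale survives both the floor operation and the $1/\log$ rescaling. Reconciling the harmonic-mean counting with $\flr{\cdot}$ is also the reason the bound must be stated implicitly. Finally, I expect the $\log(L\Delta/\varepsilon)$ factors (flagged in red) to be an artifact of the per-round union bound in the heterogeneous amplification rather than intrinsic; tightening or removing them is the natural loose end.
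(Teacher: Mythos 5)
Your high-level skeleton (coordinate zero-chain hard instance, Bernoulli masking of the frontier coordinate, converting oracle counts to time via \eqref{eq:required_time}, then Markov/high-probability to expectation) is in the right family, but the central mechanism you propose for producing the harmonic-mean threshold is not the paper's and, as stated, cannot work. You want each chain link to be split so that it advances only after \emph{every} worker has returned an unmasked gradient, with masking probability $p \asymp n\varepsilon/\sigma^2$, so that each worker must perform about $\sigma^2/(n\varepsilon)$ calls per link. A per-link requirement of $\sigma^2/(n\varepsilon)$ calls from \emph{every} worker is strictly stronger than the condition in \eqref{eq:iTLiJe} (the harmonic mean dominates the minimum), and it would exceed the matching upper bound of \algname{Malenia SGD} (Theorem~\ref{cor:max_time_params_heter}), under which a round can terminate while the slowest worker has produced only on the order of $\sigma^2/(n^2\varepsilon)$ gradients; hence no unbiased, per-worker $\sigma^2$-variance-bounded oracle can enforce your requirement. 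Concretely, since $\nabla f_i$ enters $f=\frac{1}{n}\sum_i f_i$ with weight $1/n$, its frontier coordinate is $n$ times larger than that of $\nabla f$, so the variance budget forces a reveal probability of order $n^2\varepsilon/\sigma^2$ per worker (not $n\varepsilon/\sigma^2$), and your construction then yields a min/sum-type per-round condition rather than the harmonic-mean-of-floors form; your Step~2, which asserts that progress requires harmonic-mean-counted work across workers, is precisely what the hard instance must be engineered to deliver, and your instance does not deliver it.

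The paper's route is allocation-based rather than ``all workers per link'': it builds $S$ independent block-chains $h_j(x)=\frac{nL\lambda^2}{l_1}F_T(x_j/\lambda)$ with $f=\frac1n\sum_j h_j$, and assigns consecutive segments of $K\approx\log T$ links of each chain to a \emph{single} worker at a time, the assignment changing across pre-chosen time windows $[\bar t_{w-1},\bar t_w)$, with oracles \eqref{eq:JizJACFEaGQaVyyLArn} that mask only coordinates of the window $w(t)$ containing the current time. Because worker $i$ simultaneously holds $a_{w,i}$ chain frontiers, the $\sigma^2$ budget forces $p_{w,i}\propto a_{w,i}$, and Lemma~\ref{lemma:params} balances $a_{w,i}\propto 1/(V_i(\bar t_w)-V_i(\bar t_{w-1}))$, $S=\sum_i a_{w,i}$, and the windows so that no worker can traverse its segment before $\bar t_w$; the harmonic sum in \eqref{eq:iTLiJe} is exactly this $\sum_i a_{w,i}$, and $\varepsilon$-stationarity requires completing about half of the $S$ chains (Markov's inequality). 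The $\log(L\Delta/\varepsilon)$ loss indeed comes from needing $K\approx\log T$ geometric variables per window so that their sums concentrate (Lemma~\ref{lemma:many_geom}) and a union bound over windows survives — close in spirit to your union-bound remark — but without the block/allocation structure, the time-dependent oracles, and the parameter-balancing lemma, your argument has no route to the specific threshold $\max\{c_2\sigma^2/(n\varepsilon),1\}$ with the $\flr{\cdot}^{-1}$ harmonic form. (Minor: random rotations are incompatible with the zero-respecting, coordinate-support argument used here; the paper keeps coordinate-aligned chains.)
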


Unlike \eqref{eq:homog_compl} where the dependencies on $\{V_i\}$ are \emph{mean-like}, the dependencies in \eqref{eq:iTLiJe} are \emph{harmonic-like}. Since the heterogeneous setting is more general and complicated, this leads to worse guarantees. Looking ahead, up to logarithmic and constants factors, the obtained lower bound is tight and attained by \algname{Malenia SGD} \citep{tyurin2023optimal} (see Section~\ref{sec:malenia}). 

We asked ourselves if getting a tight lower bound without the logarithmic terms is possible. The answer is affirmative, but instead of taking one group of predefined worst-case deterministic functions $\{f_i\}$, the following construction samples random functions $\{f_i\}$.
The fact that the functions $\{f_i\}_{i=1}^n$ are random helps to prove a tight lower bound (the main difference between the theorems is highlighted in \textbf{bold}). This lower bound is fundamental and can not be bypassed by any parallel and asynchronous method \citep{zheng2017asynchronous,gu2021fast,mishchenko2022asynchronous,islamov2024asgrad}. We provide informal and formal versions of the theorem:

\begin{theorem*}[Informal Formulation of Theorem~\ref{theorem:random_lower_bound_heter_random}]
  Let Assumptions~\ref{ass:lipschitz_constant}, \ref{ass:lower_bound}, \ref{ass:stochastic_variance_bounded_heter}, and \ref{ass:speed} hold. It is impossible to converge faster than $\nicefrac{1}{2} \times \underline{t}_{\ceil{c_1 \times \frac{L \Delta}{\varepsilon}}}$ seconds, where the sequence $\{\underline{t}_k\}$ is defined in \eqref{eq:iTLiJe_random} and $c_1$ is a universal constant.
\end{theorem*}

\begin{restatable}{theorem}{RANDOMLOWERBOUNDFULLHETERRANDOM}
  \label{theorem:random_lower_bound_heter_random}
  Consider Protocol~\ref{alg:time_multiple_oracle_protocol}. We take computation powers $\{v_i\}$ such that Assumption~\ref{ass:speed} holds, and fix $L, \Delta, \varepsilon > 0$ and $\sigma^2 \geq 0$ that satisfy the inequality $\varepsilon < c' L \Delta$. For any algorithm $A \in \cA_{\textnormal{zr}},$ \textbf{we sample $\{f_i\}_{i=1}^n$ from some distribution of functions}, where the function $f = \frac{1}{n} \sum_{i=1}^n f_i$ satisfies Assumptions~\ref{ass:lipschitz_constant}, \ref{ass:lower_bound} and $f(0) - f^* \leq \Delta$ deterministically, and there exist stochastic gradient mappings $\{g_i\}_{i=1}^{n}$ in \eqref{eq:oracle_inside_random_heter}, which satisfy Assumption~\ref{ass:stochastic_variance_bounded_heter}, 
  i.e., $\ExpSub{\xi}{g_i(s_x; \xi, t)} = \nabla f_i(s_x)$ and ${\mathbb{E}_{\xi}}[\norm{g_i(s_x; \xi, t) - \nabla f_i(s_x)}^2] \leq \sigma^2$ for all $s_x \in \R^d, i \in [n],$ and $t \geq 0,$
  such that ${\mathbb{E}}\big[\inf\limits_{k \in S_t} \norm{\nabla f(x^k)}^2\big] > \varepsilon$ holds\footnote{We take the expectation over all randomness.}, 
  where $S_t \eqdef \left\{k \in \N_0 \,|\,t^k \leq t\right\},$
  \begin{align*}
    \textstyle t = \frac{1}{2} \underline{t}_{\flr{c_1 \times \frac{L \Delta}{\varepsilon}}}
  \end{align*}
  and 
  \begin{align}
    \label{eq:iTLiJe_random}
    \textstyle \underline{t}_{k} \eqdef \min\left\{t \geq 0 \,:\, \left(\frac{1}{n}\sum_{i=1}^n \flr{c_3 \times \left(V_{i}\left(t\right) - V_{i} (\underline{t}_{k - 1})\right)}^{-1}\right)^{-1} \geq \max\left\{c_2 \times \frac{\sigma^2}{n \varepsilon}, 1\right\} \right\}
  \end{align}
  for all $k \geq 1$ $(\underline{t}_{0} \equiv 0)$. The quantities $c', c_1,$ $c_2,$ and $c_3$ are universal constants. The sequences $x^k$ and $t^k$ are defined in Protocol~\ref{alg:time_multiple_oracle_protocol}.
\end{restatable}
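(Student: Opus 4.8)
The plan is to construct a randomized family of hard instances and reduce the theorem to a progress-counting argument. Sample $f=\frac1n\sum_{i=1}^n f_i$, where each $f_i$ is a rescaled Nesterov-type zero-chain (a ``progressive function'' in the sense of \citet{arjevani2022lower,carmon2020lower}) living on a coordinate block $\mathcal B_i$ dedicated to worker $i$, composed with a random ingredient (a random permutation of the coordinates inside $\mathcal B_i$, together with random signs). Each stochastic gradient mapping $g_i$ is the Arjevani-et-al.-style oracle that, when queried at a point whose support reaches link $j$ of $f_i$'s chain, reveals link $j+1$ only with probability $p\asymp n\varepsilon/\sigma^2$ and otherwise returns a gradient supported on already-revealed links; one checks $\mathbb E_\xi[g_i]=\nabla f_i$ and $\mathbb E_\xi\|g_i-\nabla f_i\|^2\le\sigma^2$, so Assumption~\ref{ass:stochastic_variance_bounded_heter} holds. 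Since the blocks $\mathcal B_i$ are disjoint, $\norm{\nabla f(x)}^2=\frac1{n^2}\sum_{i=1}^n\norm{\nabla f_i(x)}^2$; hence $\norm{\nabla f(x)}^2\le\varepsilon$ forces \emph{every} worker to drive $\norm{\nabla f_i(x)}^2$ below $\approx n\varepsilon$, i.e.\ to clear essentially its whole chain, so no worker can be ignored. Scaling the $f_i$ so that $f$ is $L$-smooth and $f(0)-f^*\le\Delta$ fixes each chain length at $T\asymp L\Delta/\varepsilon$.

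\textbf{Progress lemma.} Next I would prove that, with high probability over the sampled functions and the oracle noise, a zero-respecting algorithm's progress---measured by the furthest link of $f_i$'s chain reached by any queried point---can advance only when worker $i$ receives a ``successful'' stochastic gradient, and by at most one link per success. The point of sampling the functions (rather than fixing a worst case as in Theorem~\ref{theorem:random_lower_bound_heter}) is that this progress bound can then be established with only $O(1)$ successes required per link: one replaces the worst-case-chain-plus-union-bound reasoning---which costs a $\log(L\Delta/\varepsilon)$ factor once one insists that \emph{all} $\asymp T$ links clear simultaneously with high probability---by an in-expectation ``probabilistic zero-chain'' argument, so no logarithmic slack is incurred. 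Combined with \eqref{eq:required_time}, the number of links of $f_i$ cleared by time $t$ is then stochastically dominated by a constant multiple of $\flr{V_i(t)}$, the number of stochastic gradients worker $i$ can compute by time $t$.

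\textbf{Time accounting via the $\underline t_k$ recursion.} Finally I would partition the run by the bookkeeping times $\{\underline t_k\}$ of \eqref{eq:iTLiJe_random} and show that block $k$ cannot close before $\underline t_k$. To perform one more ``effective'' step---one that actually decreases all the $\norm{\nabla f_i}^2$ simultaneously, hence clears one more link of every chain---the algorithm's update must aggregate, across the $n$ workers, an unbiased estimator of $\frac1n\sum_i\nabla f_i$ whose variance is $\le\varepsilon$; but if worker $i$ can supply only $B_i\le\flr{V_i(t)-V_i(\underline t_{k-1})}$ fresh samples during the block, every such estimator has variance at least $\Theta\!\big(\tfrac{\sigma^2}{n^2}\sum_i\tfrac1{B_i}\big)$, so the block stays open until $\big(\tfrac1n\sum_i\flr{c_3(V_i(t)-V_i(\underline t_{k-1}))}^{-1}\big)^{-1}\ge c_2\tfrac{\sigma^2}{n\varepsilon}$, which is exactly the defining condition in \eqref{eq:iTLiJe_random}. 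Since $\asymp L\Delta/\varepsilon$ links must be cleared, no algorithm reaches a point with $\norm{\nabla f(x^k)}^2\le\varepsilon$ before time $\underline t_{\flr{c_1 L\Delta/\varepsilon}}$; evaluating the criterion at $t=\tfrac12\underline t_{\flr{c_1 L\Delta/\varepsilon}}$ and using the factor $\tfrac12$ to absorb the single in-progress link and the low-probability failure events yields $\mathbb E[\inf_{k\in S_t}\norm{\nabla f(x^k)}^2]>\varepsilon$. One then picks $c',c_1,c_2,c_3$ to make all the $\asymp$'s consistent, with $\varepsilon<c'L\Delta$ ensuring $T\ge1$.

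\textbf{Main obstacle.} The crux is the progress lemma: proving the probabilistic zero-chain bound with \emph{no} logarithmic loss while the algorithm is fully adaptive over both \emph{which point} and \emph{which worker} to query (the pair $(t^{k+1},i^{k+1})$ in Protocol~\ref{alg:time_multiple_oracle_protocol}). This needs a martingale/stopping-time setup in which the random ingredients of the $f_i$ are revealed only as the algorithm probes them, so that at every query the conditional probability of a ``success'' is exactly $p$ independently of the history, followed by optional stopping to turn ``expected progress $\le p\cdot(\#\text{queries})$'' into the blockwise count. A secondary difficulty is coupling the $n$ per-worker progress processes to the single aggregated stopping rule \eqref{eq:iTLiJe_random}---translating ``every $\norm{\nabla f_i}^2$ is still large'' into ``the current block has not closed,'' uniformly over the algorithm's continuous-time scheduling of the workers---and checking that the floor/integral bookkeeping of the universal computation model \eqref{eq:required_time} matches the discrete sample counts $B_i$ exactly.
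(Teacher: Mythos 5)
Your high-level ingredients (zero-chain blocks, probability-$p$ coordinate revelation, time accounting against the recursion \eqref{eq:iTLiJe_random}) are the right flavor, but the core of your construction does not yield the claimed bound. With one dedicated block $\mathcal B_i$ per worker and the requirement that \emph{every} worker clear its own chain, the budgets force a loss of polynomial factors in $n$: if each incomplete chain must keep $\norm{\nabla f}^2>\varepsilon$-relevant mass, the scaling $f_i\propto nL\lambda^2 F_T(x_i/\lambda)$ with $\lambda^2\asymp\varepsilon/L^2$ together with $f(0)-f^*\le\Delta$ caps the per-worker chain length at $T\asymp L\Delta/(n\varepsilon)$ (not $L\Delta/\varepsilon$ as you state), and the per-worker variance budget $\E\norm{g_i-\nabla f_i}^2\le\sigma^2$ with frontier-coordinate magnitude $\asymp n\sqrt{\varepsilon}$ forces $p\asymp n^2\varepsilon/\sigma^2$, so each worker needs only $\asymp \frac{L\Delta}{n\varepsilon}+\frac{L\Delta\sigma^2}{n^3\varepsilon^2}$ samples. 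This is a max-type bound that is weaker than \eqref{eq:iTLiJe_random} by factors of $n$ (compare Example~\ref{eq:simple_heter}), and no choice of the universal constants $c_1,c_2,c_3$ can absorb that. Relatedly, your block-closing argument (``the algorithm must aggregate an unbiased estimator of $\frac1n\sum_i\nabla f_i$ with variance $\le\varepsilon$, hence variance $\gtrsim\frac{\sigma^2}{n^2}\sum_i 1/B_i$'') is not a valid oracle lower-bound step: a zero-respecting algorithm is not obliged to form any aggregated estimator or to advance all chains in lockstep, so the harmonic-mean condition cannot be imposed on the algorithm; it has to emerge from the hard instance itself.

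That is exactly what the paper's construction does differently, and it is also where the randomness of $\{f_i\}$ actually enters. The paper uses $S$ blocks with $S$ a free parameter (generally $\neq n$), and within each time window $[\bar t_{w-1},\bar t_w)$ assigns the current links of $a_{w,i}$ blocks to worker $i$, with $a_{w,i}\propto 1/(V_i(\bar t_w)-V_i(\bar t_{w-1}))$ and revelation probability $p_{w,i}\propto a_{w,i}n^2\varepsilon/(\sigma^2 S)$ (Lemma~\ref{lemma:params}); spreading many simultaneously active frontier coordinates over a slow worker is what produces the harmonic-mean recursion. Moreover, the log factor of Theorem~\ref{theorem:random_lower_bound_heter} is removed not by randomly permuting coordinates inside fixed blocks (with dedicated blocks such randomization changes nothing, since the deterministic zero-chain argument already applies), but by making the link-to-worker allocation \emph{adaptive to the realized oracle randomness} (the resisting allocation, Alg.~\ref{alg:resisting}): the next link of block $j$ stays with the same worker when the geometric draw was small and migrates to the next window's worker when it was large, so only a single geometric draw per window is needed and concentration is handled once along the whole chain (Lemma~\ref{lemma:concentration_sum}) instead of via a union bound over windows. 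Your martingale/optional-stopping ``progress lemma'' does not substitute for this allocation mechanism, so as written the proposal has a genuine gap both in achieving the harmonic-mean form and in removing the logarithmic factor.
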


\subsection{Optimal method}
\label{sec:malenia}

The obtained lower bound is tight since it is matched by \algname{Malenia SGD} \citep{tyurin2023optimal}. This method is closely related to \algname{Rennala SGD} with a similar structure, and mathematically, it is the vanilla \algname{SGD} method with a proper batch collection strategy (see Method~\ref{alg:alg_server}). However, essential algorithmic changes must be applied to make it work with heterogeneous functions. The following theorem was proved by \citet{tyurin2023optimal}.

\begin{restatable}{theorem}{THEOREMSGDHETROG}[\citep{tyurin2023optimal}]
  \label{thm:sgd_heterog}
  Let Assumptions~\ref{ass:lipschitz_constant}, \ref{ass:lower_bound}, and \ref{ass:stochastic_variance_bounded_heter} hold. We take take $S = \max\left\{\left\lceil\nicefrac{\sigma^2}{\varepsilon}\right\rceil, n\right\},$ and
  $\gamma = \min\left\{\frac{1}{L}, \frac{\varepsilon S}{2 L \sigma^2}\right\} = \Theta\left(1 / L\right)$ in Method~\ref{alg:alg_server_heterog},
  then after $K \geq 24 \Delta L / \varepsilon$
  iterations the method guarantees that $\frac{1}{K}\sum_{k=0}^{K-1}\Exp{\norm{\nabla f(x^k)}^2} \leq \varepsilon.$
\end{restatable}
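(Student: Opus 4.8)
The plan is to observe that \algname{Malenia SGD} is exactly vanilla nonconvex \algname{SGD}, $x^{k+1}=x^k-\gamma g^k$, driven by the estimator $g^k=\frac1n\sum_{i=1}^n\frac1{B_i}g^k_i$ with $g^k_i=\sum_{j=1}^{B_i}\nabla f_i(x^k;\xi^k_{i,j})$, and then to run the textbook descent argument (as for Theorem~\ref{thm:sgd_homog}, see \citep{ghadimi2013stochastic,khaled2020better}). Fix an iteration $k$ and condition on $x^k$. The first thing to establish is that the batch sizes $\{B_i\}_{i=1}^n$ produced by the while loop are measurable with respect to the computation powers $\{v_i\}$ and the arrival order alone — the timing of when a worker finishes does not depend on the \emph{value} of the gradient it returns, only on $\{v_i\}$ — and are therefore independent of the gradient noise $\{\xi^k_{i,j}\}$. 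Hence, conditioning additionally on $\{B_i\}$, the samples are independent with $\ExpSub{\xi}{\nabla f_i(x^k;\xi^k_{i,j})}=\nabla f_i(x^k)$, so $\Exp{g^k\mid x^k}=\frac1n\sum_{i=1}^n\nabla f_i(x^k)=\nabla f(x^k)$: the estimator is unbiased. (One implicitly assumes the batch collection terminates, which it does once every worker has returned at least one gradient; quantifying how long that takes is precisely the job of the companion time-complexity statement.)

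Next comes the variance bound, which is where the batch rule of Method~\ref{alg:alg_server_heterog} is used. Given $\{B_i\}$, independence across workers and across samples yields $\Exp{\norm{g^k-\nabla f(x^k)}^2\mid x^k,\{B_i\}}=\frac1{n^2}\sum_{i=1}^n\frac1{B_i^2}\cdot B_i\sigma^2=\frac{\sigma^2}{n^2}\sum_{i=1}^n\frac1{B_i}$. The termination condition of the \texttt{while} loop is exactly $\left(\frac1n\sum_{i=1}^n\frac1{B_i}\right)^{-1}\geq\frac{S}{n}$, i.e. $\sum_{i=1}^n\frac1{B_i}\leq\frac{n^2}{S}$, which holds deterministically at the end of the loop. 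Therefore $\Exp{\norm{g^k-\nabla f(x^k)}^2\mid x^k}\leq\frac{\sigma^2}{S}\leq\varepsilon$ by the choice $S=\max\{\ceil{\sigma^2/\varepsilon},n\}$. This is the heterogeneous analogue of the Rennala bound $\sigma^2/S\leq\varepsilon$: the weights $\frac1{B_i}$ together with the harmonic-mean threshold are engineered so that the conditional variance collapses to $\sigma^2/S$ no matter how the $B_i$ happen to come out.

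With unbiasedness and the variance bound in hand, apply $L$-smoothness (Assumption~\ref{ass:lipschitz_constant}): $f(x^{k+1})\leq f(x^k)-\gamma\inp{\nabla f(x^k)}{g^k}+\frac{L\gamma^2}{2}\norm{g^k}^2$. Taking $\Exp{\cdot\mid x^k}$ and writing $\Exp{\norm{g^k}^2\mid x^k}=\norm{\nabla f(x^k)}^2+\Exp{\norm{g^k-\nabla f(x^k)}^2\mid x^k}$ gives $\Exp{f(x^{k+1})\mid x^k}\leq f(x^k)-\gamma\bigl(1-\tfrac{L\gamma}{2}\bigr)\norm{\nabla f(x^k)}^2+\tfrac{L\gamma^2}{2}\cdot\tfrac{\sigma^2}{S}$. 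Since $\gamma\leq1/L$ we have $1-\tfrac{L\gamma}{2}\geq\tfrac12$; since $\gamma\leq\tfrac{\varepsilon S}{2L\sigma^2}$ we have $\tfrac{L\gamma^2}{2}\cdot\tfrac{\sigma^2}{S}=\tfrac{L\gamma}{2}\bigl(\gamma\tfrac{\sigma^2}{S}\bigr)\leq\tfrac{\gamma\varepsilon}{4}$; and since $S\geq\sigma^2/\varepsilon$ we get $\gamma\geq\tfrac1{2L}$, so $\gamma=\Theta(1/L)$ as claimed. Rearranging, taking total expectation, and telescoping over $k=0,\dots,K-1$ using $f(x^K)\geq f^*$ and $f(0)-f^*\leq\Delta$ yields $\tfrac{\gamma}{2}\sum_{k=0}^{K-1}\Exp{\norm{\nabla f(x^k)}^2}\leq\Delta+\tfrac{K\gamma\varepsilon}{4}$, hence $\tfrac1K\sum_{k=0}^{K-1}\Exp{\norm{\nabla f(x^k)}^2}\leq\tfrac{2\Delta}{\gamma K}+\tfrac{\varepsilon}{2}\leq\tfrac{4L\Delta}{K}+\tfrac{\varepsilon}{2}$, which is at most $\varepsilon$ once $K\geq 8L\Delta/\varepsilon$, and \emph{a fortiori} for $K\geq24L\Delta/\varepsilon$.

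The only non-routine step is the variance argument in the first two paragraphs: everything hinges on the independence of the data-dependent batch sizes $\{B_i\}$ from the gradient stochasticity, which is what lets one condition on $\{B_i\}$ and treat the inner loop as an ordinary minibatch; the rest is the standard \algname{SGD} descent recursion with the two stepsize constraints $\gamma\leq1/L$ and $\gamma\leq\varepsilon S/(2L\sigma^2)$ trading off the bias-free descent term against the noise term. (When $\sigma^2=0$ the noise term vanishes, $\gamma=1/L$, and the bound holds trivially.)
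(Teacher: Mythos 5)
Your proof is correct and follows essentially the same route as the paper's source (\citet{tyurin2023optimal}; the paper itself only cites it, but its appendix proof of the convex analogue, Theorem~\ref{thm:sgd_heter_convex}, uses the identical argument): treat \algname{Malenia SGD} as vanilla \algname{SGD} with an unbiased estimator whose conditional variance is at most $\sigma^2/S$ because the while-loop exit condition forces $\sum_{i=1}^n 1/B_i \le n^2/S$, then run the standard smooth nonconvex descent recursion with $\gamma \le 1/L$ and $\gamma \le \varepsilon S/(2L\sigma^2)$. The only cosmetic nit is that your conditional-variance line should be an inequality rather than an equality, since Assumption~\ref{ass:stochastic_variance_bounded_heter} only upper-bounds the per-sample variance by $\sigma^2$.
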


This is a new theorem analyzing \algname{Malenia SGD} with the universal computation model:

\begin{restatable}{theorem}{THEOREMMAINSTATICTIMEHETER}
  \label{cor:max_time_params_heter}
  Consider the assumptions and the parameters from Theorem~\ref{thm:sgd_heterog}, plus Assumption~\ref{ass:speed}. Then Method~\ref{alg:alg_server_heterog} (\algname{Malenia SGD}) converges after at most $\bar{t}_{\ceil{\frac{24 L \Delta}{\varepsilon}}}$ seconds, where 
  \begin{align}
    \label{eq:upper_bound}
    \textstyle \bar{t}_k \eqdef \min\left\{t \geq 0 \, : \, \left(\frac{1}{n} \sum_{i=1}^n \flr{V_i(t) - V_i(\bar{t}_{k-1})}^{-1}\right)^{-1} \geq \max\left\{\frac{2 \sigma^2}{n \varepsilon}, 1\right\}\right\} \quad (\bar{t}_0 \equiv 0)
  \end{align}
  for all $k \geq 1.$
\end{restatable}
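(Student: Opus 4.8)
The plan is to reduce the time-complexity bound for \algname{Malenia SGD} to the iteration-complexity bound of Theorem~\ref{thm:sgd_heterog} by carefully bookkeeping how much ``computation work'' each worker performs inside a single \textbf{while}-loop of Method~\ref{alg:alg_server_heterog}. Fix an iteration $k$ and suppose it starts at time $\bar{t}_{k-1}$. During this iteration, worker $i$ accumulates a local batch count $B_i$, and by the universal computation model \eqref{eq:required_time} the number of stochastic gradients worker $i$ can return by time $t$ is exactly $\flr{V_i(t) - V_i(\bar{t}_{k-1})}$ (up to the one gradient that may be ``in flight''; I would absorb this $\pm 1$ slack into the constants, which is why the lower-bound and upper-bound sequences differ only by universal constants). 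The loop terminates as soon as $\bigl(\frac1n\sum_{i=1}^n \frac{1}{B_i}\bigr)^{-1} \geq \frac Sn$, i.e., the harmonic mean of the $B_i$ reaches $S/n$. So the first step is: show that by time $\bar{t}_k$ as defined in \eqref{eq:upper_bound}, we have $B_i \geq \flr{V_i(\bar{t}_k) - V_i(\bar{t}_{k-1})}$ for every $i$, hence $\bigl(\frac1n\sum_i \frac{1}{B_i}\bigr)^{-1} \geq \bigl(\frac1n\sum_i \flr{V_i(\bar{t}_k)-V_i(\bar{t}_{k-1})}^{-1}\bigr)^{-1} \geq \max\{\frac{2\sigma^2}{n\varepsilon},1\} \geq \frac Sn$ with the choice $S = \max\{\ceil{\sigma^2/\varepsilon}, n\}$ from Theorem~\ref{thm:sgd_heterog}. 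This proves iteration $k$ finishes by $\bar{t}_k$, and by induction iteration $K-1$ finishes by $\bar{t}_K$.

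The second step is to verify the gradient estimator $g^k = \frac1n \sum_i \frac{1}{B_i} g^k_i$ produced by Method~\ref{alg:alg_server_heterog} is exactly an unbiased estimator of $\nabla f(x^k)=\frac1n\sum_i \nabla f_i(x^k)$ with the variance bound required by the SGD analysis: since $g^k_i$ is a sum of $B_i$ i.i.d. stochastic gradients at $x^k$, we get $\E[\frac{1}{B_i} g^k_i \mid x^k] = \nabla f_i(x^k)$ and $\E[\|\frac{1}{B_i}g^k_i - \nabla f_i(x^k)\|^2 \mid x^k] \leq \sigma^2/B_i$, so $\E[\|g^k - \nabla f(x^k)\|^2] \leq \frac{1}{n^2}\sum_i \frac{\sigma^2}{B_i} = \frac{\sigma^2}{n}\cdot\frac1n\sum_i\frac{1}{B_i} \leq \frac{\sigma^2}{n}\cdot\frac nS = \frac{\sigma^2}{S}$, using the loop-termination condition. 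One subtlety: $B_i$ is itself random (it depends on the $\{v_j\}$, which are fixed here, but the loop exit time depends on which worker returns when — and in this model the return times are deterministic given $\{v_i\}$), so the estimator is well-behaved; but even if $B_i$ were data-dependent one should argue the conditional-on-$x^k$ unbiasedness holds, which it does because the batch sizes here are determined solely by the (data-independent) computation schedule. This is precisely the structure that makes \algname{Malenia SGD} ``mathematically vanilla \algname{SGD}'' with effective variance $\sigma^2/S$, and it is exactly the setting of Theorem~\ref{thm:sgd_heterog}.

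The third step is to invoke Theorem~\ref{thm:sgd_heterog} directly: with $S = \max\{\ceil{\sigma^2/\varepsilon}, n\}$, $\gamma = \Theta(1/L)$, after $K \geq 24 L\Delta/\varepsilon$ iterations one has $\frac1K\sum_{k=0}^{K-1}\Exp{\norm{\nabla f(x^k)}^2} \leq \varepsilon$. Combining with Step~1, taking $K = \ceil{24 L\Delta/\varepsilon}$, the total running time is at most $\bar{t}_K = \bar{t}_{\ceil{24 L\Delta/\varepsilon}}$, which is the claimed bound. The only remaining point is that $\bar t_k$ in \eqref{eq:upper_bound} is well-defined and finite: since each $V_i$ is continuous and non-decreasing (the Theorem after Example~\ref{ex:simple}), the map $t \mapsto \bigl(\frac1n\sum_i \flr{V_i(t)-V_i(\bar t_{k-1})}^{-1}\bigr)^{-1}$ is non-decreasing and, provided not all $v_i$ vanish identically on a ray, eventually exceeds any threshold, so the $\min$ is attained (with the convention $\min\emptyset = \infty$ handling degenerate cases, in which case the bound is vacuously $\infty$).

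I expect the main obstacle to be Step~1, specifically controlling the $\pm 1$ discrepancy between the idealized count $\flr{V_i(t)-V_i(\bar t_{k-1})}$ and the actual $B_i$ attained at the loop-exit time, in a way that matches the upper-bound constants ($2\sigma^2/(n\varepsilon)$ versus the lower bound's $c_2 \sigma^2/(n\varepsilon)$ and $c_3$ rescaling). One has to be careful that a worker may be midway through computing a gradient that it never gets to return before the loop exits, so $B_i$ can be off by one from $\flr{V_i(\bar t_k) - V_i(\bar t_{k-1})}$; and because the quantity being thresholded is a harmonic mean, an off-by-one in a single small $B_i$ can matter. The fix is to exit slightly later / use a slightly larger effective work budget so that $B_i \geq \flr{V_i(\bar t_k)-V_i(\bar t_{k-1})}$ holds, which is exactly why the definition of $\bar t_k$ uses $\max\{2\sigma^2/(n\varepsilon),1\}$ rather than $\max\{\sigma^2/(n\varepsilon),1\}$ — the factor $2$ gives enough room. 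Everything else (the SGD descent lemma, the variance computation, the well-definedness of $\bar t_k$) is routine once this bookkeeping is pinned down.
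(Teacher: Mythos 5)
Your proposal is correct and follows essentially the same route as the paper: invoke Theorem~\ref{thm:sgd_heterog} for the iteration count $\ceil{24 L\Delta/\varepsilon}$ and inductively show that each while-loop exits by time $\bar t_k$, because the universal computation model gives $B_i \geq \flr{V_i(\bar t_k) - V_i(\bar t_{k-1})}$ after the restart at the start of iteration $k$, and $\left(\frac{1}{n}\sum_{i=1}^n \flr{V_i(\bar t_k)-V_i(\bar t_{k-1})}^{-1}\right)^{-1} \geq \max\{2\sigma^2/(n\varepsilon),1\} \geq S/n$. One small correction of interpretation: the factor $2$ in \eqref{eq:upper_bound} is not needed to absorb an ``in-flight''/off-by-one slack in $B_i$ (there is none, since \eqref{eq:required_time} directly gives $B_i \geq \flr{V_i(t)-V_i(\bar t_{k-1})}$); it simply absorbs the ceiling via $S = \max\{\ceil{\sigma^2/\varepsilon}, n\} \leq \max\{2\sigma^2/\varepsilon, n\}$, exactly as in the paper's argument.
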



Up to constant factors, Theorem~\ref{theorem:random_lower_bound_heter_random} and Theorem~\ref{cor:max_time_params_heter} provide the optimal time complexity in the heterogeneous setting. The result is implicit, which is not a problem in practice since \algname{Malenia SGD} does not require $\{V_i\}$ to reach the optimality. Let us consider examples where we can get an explicit formula.

\begin{restatable}{example}{EXAMPLEHETERFIXED} [Fixed Computation Model in the Heterogeneous Setting]
  \label{eq:simple_heter}
  Assume that $v_i(t) = v_i$ with $v_i > 0$ for all $i \in [n].$ Then
  \begin{align}
    \label{eq:opt_heter_fixed}
    \textstyle \bar{t}_{\ceil{\frac{24 L \Delta}{\varepsilon}}} = \Theta\left(\max\limits_{i \in [n]} \frac{1}{v_i} + \left(\frac{1}{n} \sum_{i=1}^n \frac{1}{v_i}\right)\frac{\sigma^2}{n \varepsilon}\right).
  \end{align}
\end{restatable}

{
\begin{restatable}{example}{EXAMPLETRENDHETER} [Nonlinear Trend in the Heterogeneous Setting]
  \label{eq:example_share_heter}
  Assume that $v_i(t) = v_i \times g(t)$ with $v_i > 0$ for all $i \in [n]$ and a continuous almost everywhere positive function $g(t) \,:\, \R_{+}^{\infty} \to \R_{+}.$ Then
  \begin{align}
    \label{eq:bwvFomtFlwGBlgeblrheter}
    \textstyle \bar{t}_{\ceil{\frac{24 L \Delta}{\varepsilon}}} = G^{-1} \left(c_1 \cdot \left[\max\limits_{i \in [n]} \frac{1}{v_i} + \left(\frac{1}{n} \sum_{i=1}^n \frac{1}{v_i}\right)\frac{\sigma^2}{n \varepsilon}\right]\right),
  \end{align}
  where $G(t) \eqdef \int_{0}^{t} g(\tau) d \tau,$ and $c_1 \in [1/4, 4]$ (can depend on other parameters but is bounded).
\end{restatable}
}

Example~\ref{eq:simple_heter} shows that our result recovers the optimal complexity derived in \citep{tyurin2023optimal}. However, our time complexity works with virtually any computation model. 

\section{Conclusion}

To the best of our knowledge, this is the first work that provides optimal time complexities under virtually arbitrary computation behavior of workers in the distributed setting. We believe that our lower bounds, Theorems~\ref{theorem:random_lower_bound} and \ref{theorem:random_lower_bound_heter_random}, and upper bounds, Theorems~\ref{cor:max_time_params} and \ref{cor:max_time_params_heter}, close an important problem in parallel optimization. Our approach and techniques have the potential to serve as a foundation for solving other mathematical questions from parallel and asynchronous optimization in the future. {One interesting question is determining the optimal time complexities when the universal computation model is correlated with the randomness from stochastic gradients.}

\bibliography{iclr2025_conference}

\begin{thebibliography}{44}
\providecommand{\natexlab}[1]{#1}
\providecommand{\url}[1]{\texttt{#1}}
\expandafter\ifx\csname urlstyle\endcsname\relax
  \providecommand{\doi}[1]{doi: #1}\else
  \providecommand{\doi}{doi: \begingroup \urlstyle{rm}\Url}\fi

\bibitem[Arjevani \& Shamir(2015)Arjevani and Shamir]{arjevani2015communication}
Yossi Arjevani and Ohad Shamir.
\newblock Communication complexity of distributed convex learning and optimization.
\newblock \emph{Advances in Neural Information Processing Systems}, 28, 2015.

\bibitem[Arjevani et~al.(2020)Arjevani, Shamir, and Srebro]{arjevani2020tight}
Yossi Arjevani, Ohad Shamir, and Nathan Srebro.
\newblock A tight convergence analysis for stochastic gradient descent with delayed updates.
\newblock In \emph{Algorithmic Learning Theory}, pp.\  111--132. PMLR, 2020.

\bibitem[Arjevani et~al.(2022)Arjevani, Carmon, Duchi, Foster, Srebro, and Woodworth]{arjevani2022lower}
Yossi Arjevani, Yair Carmon, John~C Duchi, Dylan~J Foster, Nathan Srebro, and Blake Woodworth.
\newblock Lower bounds for non-convex stochastic optimization.
\newblock \emph{Mathematical Programming}, pp.\  1--50, 2022.

\bibitem[Bartle \& Sherbert(2000)Bartle and Sherbert]{bartle2000introduction}
Robert~G Bartle and Donald~R Sherbert.
\newblock \emph{Introduction to real analysis}, volume~2.
\newblock Wiley New York, 2000.

\bibitem[Bottou et~al.(2018)Bottou, Curtis, and Nocedal]{bottou2018optimization}
L{\'e}on Bottou, Frank~E Curtis, and Jorge Nocedal.
\newblock Optimization methods for large-scale machine learning.
\newblock \emph{SIAM review}, 60\penalty0 (2):\penalty0 223--311, 2018.

\bibitem[Carmon et~al.(2020)Carmon, Duchi, Hinder, and Sidford]{carmon2020lower}
Yair Carmon, John~C Duchi, Oliver Hinder, and Aaron Sidford.
\newblock Lower bounds for finding stationary points i.
\newblock \emph{Mathematical Programming}, 184\penalty0 (1):\penalty0 71--120, 2020.

\bibitem[Cohen et~al.(2021)Cohen, Daniely, Drori, Koren, and Schain]{cohen2021asynchronous}
Alon Cohen, Amit Daniely, Yoel Drori, Tomer Koren, and Mariano Schain.
\newblock Asynchronous stochastic optimization robust to arbitrary delays.
\newblock \emph{Advances in Neural Information Processing Systems}, 34:\penalty0 9024--9035, 2021.

\bibitem[Cotter et~al.(2011)Cotter, Shamir, Srebro, and Sridharan]{cotter2011better}
Andrew Cotter, Ohad Shamir, Nati Srebro, and Karthik Sridharan.
\newblock Better mini-batch algorithms via accelerated gradient methods.
\newblock \emph{Advances in Neural Information Processing Systems}, 24, 2011.

\bibitem[Dean et~al.(2012)Dean, Corrado, Monga, Chen, Devin, Mao, Ranzato, Senior, Tucker, Yang, et~al.]{dean2012large}
Jeffrey Dean, Greg Corrado, Rajat Monga, Kai Chen, Matthieu Devin, Mark Mao, Marc'aurelio Ranzato, Andrew Senior, Paul Tucker, Ke~Yang, et~al.
\newblock Large scale distributed deep networks.
\newblock \emph{Advances in Neural Information Processing Systems}, 25, 2012.

\bibitem[Duchi et~al.(2011)Duchi, Hazan, and Singer]{duchi2011adaptive}
John Duchi, Elad Hazan, and Yoram Singer.
\newblock Adaptive subgradient methods for online learning and stochastic optimization.
\newblock \emph{Journal of Machine Learning Research}, 12\penalty0 (7), 2011.

\bibitem[Ghadimi \& Lan(2013)Ghadimi and Lan]{ghadimi2013stochastic}
Saeed Ghadimi and Guanghui Lan.
\newblock Stochastic first-and zeroth-order methods for nonconvex stochastic programming.
\newblock \emph{SIAM Journal on Optimization}, 23\penalty0 (4):\penalty0 2341--2368, 2013.

\bibitem[Goodfellow et~al.(2016)Goodfellow, Bengio, Courville, and Bengio]{goodfellow2016deep}
Ian Goodfellow, Yoshua Bengio, Aaron Courville, and Yoshua Bengio.
\newblock \emph{Deep learning}, volume~1.
\newblock MIT Press, 2016.

\bibitem[Gower et~al.(2019)Gower, Loizou, Qian, Sailanbayev, Shulgin, and Richt{\'a}rik]{gower2019sgd}
Robert~Mansel Gower, Nicolas Loizou, Xun Qian, Alibek Sailanbayev, Egor Shulgin, and Peter Richt{\'a}rik.
\newblock {SGD}: General analysis and improved rates.
\newblock In \emph{International Conference on Machine Learning}, pp.\  5200--5209. PMLR, 2019.

\bibitem[Goyal et~al.(2017)Goyal, Doll{\'a}r, Girshick, Noordhuis, Wesolowski, Kyrola, Tulloch, Jia, and He]{goyal2017accurate}
Priya Goyal, Piotr Doll{\'a}r, Ross Girshick, Pieter Noordhuis, Lukasz Wesolowski, Aapo Kyrola, Andrew Tulloch, Yangqing Jia, and Kaiming He.
\newblock Accurate, large minibatch {SGD}: Training imagenet in 1 hour.
\newblock \emph{arXiv preprint arXiv:1706.02677}, 2017.

\bibitem[Gu et~al.(2021)Gu, Huang, Zhang, and Huang]{gu2021fast}
Xinran Gu, Kaixuan Huang, Jingzhao Zhang, and Longbo Huang.
\newblock Fast federated learning in the presence of arbitrary device unavailability.
\newblock \emph{Advances in Neural Information Processing Systems}, 34:\penalty0 12052--12064, 2021.

\bibitem[Guzm{\'a}n \& Nemirovski(2015)Guzm{\'a}n and Nemirovski]{guzman2015lower}
Crist{\'o}bal Guzm{\'a}n and Arkadi Nemirovski.
\newblock On lower complexity bounds for large-scale smooth convex optimization.
\newblock \emph{Journal of Complexity}, 31\penalty0 (1):\penalty0 1--14, 2015.

\bibitem[Hanzely et~al.(2020)Hanzely, Hanzely, Horv{\'a}th, and Richt{\'a}rik]{hanzely2020lower}
Filip Hanzely, Slavom{\'\i}r Hanzely, Samuel Horv{\'a}th, and Peter Richt{\'a}rik.
\newblock Lower bounds and optimal algorithms for personalized federated learning.
\newblock \emph{Advances in Neural Information Processing Systems}, 33:\penalty0 2304--2315, 2020.

\bibitem[Huang et~al.(2022)Huang, Chen, Yin, and Yuan]{huang2022lower}
Xinmeng Huang, Yiming Chen, Wotao Yin, and Kun Yuan.
\newblock Lower bounds and nearly optimal algorithms in distributed learning with communication compression.
\newblock \emph{Advances in Neural Information Processing Systems}, 2022.

\bibitem[Ioffe \& Szegedy(2015)Ioffe and Szegedy]{ioffe2015batch}
Sergey Ioffe and Christian Szegedy.
\newblock Batch normalization: Accelerating deep network training by reducing internal covariate shift.
\newblock In \emph{International Conference on Machine Learning}, pp.\  448--456. PMLR, 2015.

\bibitem[Islamov et~al.(2024)Islamov, Safaryan, and Alistarh]{islamov2024asgrad}
Rustem Islamov, Mher Safaryan, and Dan Alistarh.
\newblock {AsGrad}: A sharp unified analysis of asynchronous-{SGD} algorithms.
\newblock In \emph{International Conference on Artificial Intelligence and Statistics}, pp.\  649--657. PMLR, 2024.

\bibitem[Kairouz et~al.(2021)Kairouz, McMahan, Avent, Bellet, Bennis, Bhagoji, Bonawitz, Charles, Cormode, Cummings, et~al.]{kairouz2021advances}
Peter Kairouz, H~Brendan McMahan, Brendan Avent, Aur{\'e}lien Bellet, Mehdi Bennis, Arjun~Nitin Bhagoji, Kallista Bonawitz, Zachary Charles, Graham Cormode, Rachel Cummings, et~al.
\newblock Advances and open problems in federated learning.
\newblock \emph{Foundations and Trends{\textregistered} in Machine Learning}, 14\penalty0 (1--2):\penalty0 1--210, 2021.

\bibitem[Khaled \& Richt{\'a}rik(2022)Khaled and Richt{\'a}rik]{khaled2020better}
Ahmed Khaled and Peter Richt{\'a}rik.
\newblock Better theory for {SGD} in the nonconvex world.
\newblock \emph{Transactions on Machine Learning Research}, 2022.

\bibitem[Kingma \& Ba(2015)Kingma and Ba]{kingma2014adam}
Diederik~P Kingma and Jimmy Ba.
\newblock Adam: A method for stochastic optimization.
\newblock \emph{International Conference on Learning Representations}, 2015.

\bibitem[Koloskova et~al.(2022)Koloskova, Stich, and Jaggi]{koloskova2022sharper}
Anastasia Koloskova, Sebastian~U Stich, and Martin Jaggi.
\newblock Sharper convergence guarantees for asynchronous {SGD} for distributed and federated learning.
\newblock \emph{Advances in Neural Information Processing Systems}, 2022.

\bibitem[Lan(2020)]{lan2020first}
Guanghui Lan.
\newblock \emph{First-order and stochastic optimization methods for machine learning}.
\newblock Springer, 2020.

\bibitem[Li et~al.(2020)Li, Sahu, Talwalkar, and Smith]{li2020federateda}
Tian Li, Anit~Kumar Sahu, Ameet Talwalkar, and Virginia Smith.
\newblock Federated learning: Challenges, methods, and future directions.
\newblock \emph{IEEE Signal Processing Magazine}, 37\penalty0 (3):\penalty0 50--60, 2020.

\bibitem[Lu \& De~Sa(2021)Lu and De~Sa]{lu2021optimal}
Yucheng Lu and Christopher De~Sa.
\newblock Optimal complexity in decentralized training.
\newblock In \emph{International Conference on Machine Learning}, pp.\  7111--7123. PMLR, 2021.

\bibitem[Mayer \& Jacobsen(2020)Mayer and Jacobsen]{mayer2020scalable}
Ruben Mayer and Hans-Arno Jacobsen.
\newblock Scalable deep learning on distributed infrastructures: Challenges, techniques, and tools.
\newblock \emph{ACM Computing Surveys (CSUR)}, 53\penalty0 (1):\penalty0 1--37, 2020.

\bibitem[Mishchenko et~al.(2022)Mishchenko, Bach, Even, and Woodworth]{mishchenko2022asynchronous}
Konstantin Mishchenko, Francis Bach, Mathieu Even, and Blake Woodworth.
\newblock Asynchronous {SGD} beats minibatch {SGD} under arbitrary delays.
\newblock \emph{Advances in Neural Information Processing Systems}, 2022.

\bibitem[Murty \& Kabadi(1985)Murty and Kabadi]{murty1985some}
Katta~G Murty and Santosh~N Kabadi.
\newblock Some {NP}-complete problems in quadratic and nonlinear programming.
\newblock Technical report, 1985.

\bibitem[Nemirovskij \& Yudin(1983)Nemirovskij and Yudin]{nemirovskij1983problem}
Arkadij~Semenovi{\v{c}} Nemirovskij and David~Borisovich Yudin.
\newblock Problem complexity and method efficiency in optimization.
\newblock 1983.

\bibitem[Nesterov(1983)]{nesterov1983method}
Yurii Nesterov.
\newblock A method of solving a convex programming problem with convergence rate o (1/k** 2).
\newblock \emph{Doklady Akademii Nauk SSSR}, 269\penalty0 (3):\penalty0 543, 1983.

\bibitem[Nesterov(2018)]{nesterov2018lectures}
Yurii Nesterov.
\newblock \emph{Lectures on convex optimization}, volume 137.
\newblock Springer, 2018.

\bibitem[Polyak(1964)]{polyak1964some}
Boris~T Polyak.
\newblock Some methods of speeding up the convergence of iteration methods.
\newblock \emph{{USSR} Computational Mathematics and Mathematical Physics}, 4\penalty0 (5):\penalty0 1--17, 1964.

\bibitem[Recht et~al.(2011)Recht, Re, Wright, and Niu]{recht2011hogwild}
Benjamin Recht, Christopher Re, Stephen Wright, and Feng Niu.
\newblock Hogwild!: A lock-free approach to parallelizing stochastic gradient descent.
\newblock \emph{Advances in Neural Information Processing Systems}, 24, 2011.

\bibitem[Robbins \& Monro(1951)Robbins and Monro]{robbins1951stochastic}
Herbert Robbins and Sutton Monro.
\newblock A stochastic approximation method.
\newblock \emph{The Annals of Mathematical Statistics}, pp.\  400--407, 1951.

\bibitem[Scaman et~al.(2017)Scaman, Bach, Bubeck, Lee, and Massouli{\'e}]{scaman2017optimal}
Kevin Scaman, Francis Bach, S{\'e}bastien Bubeck, Yin~Tat Lee, and Laurent Massouli{\'e}.
\newblock Optimal algorithms for smooth and strongly convex distributed optimization in networks.
\newblock In \emph{International Conference on Machine Learning}, pp.\  3027--3036. PMLR, 2017.

\bibitem[Schmidt et~al.(2021)Schmidt, Schneider, and Hennig]{schmidt2021descending}
Robin~M Schmidt, Frank Schneider, and Philipp Hennig.
\newblock Descending through a crowded valley-benchmarking deep learning optimizers.
\newblock In \emph{International Conference on Machine Learning}, pp.\  9367--9376. PMLR, 2021.

\bibitem[Tyurin \& Richt{\'a}rik(2023)Tyurin and Richt{\'a}rik]{tyurin2023optimal}
Alexander Tyurin and Peter Richt{\'a}rik.
\newblock Optimal time complexities of parallel stochastic optimization methods under a fixed computation model.
\newblock \emph{Advances in Neural Information Processing Systems}, 2023.

\bibitem[Tyurin \& Richt{\'a}rik(2024)Tyurin and Richt{\'a}rik]{tyurin2024optimal}
Alexander Tyurin and Peter Richt{\'a}rik.
\newblock On the optimal time complexities in decentralized stochastic asynchronous optimization.
\newblock \emph{arXiv preprint arXiv:2405.16218}, 2024.

\bibitem[Tyurin et~al.(2024{\natexlab{a}})Tyurin, Gruntkowska, and Richt{\'a}rik]{tyurin2024freya}
Alexander Tyurin, Kaja Gruntkowska, and Peter Richt{\'a}rik.
\newblock Freya {PAGE}: First optimal time complexity for large-scale nonconvex finite-sum optimization with heterogeneous asynchronous computations.
\newblock \emph{arXiv preprint arXiv:2405.15545}, 2024{\natexlab{a}}.

\bibitem[Tyurin et~al.(2024{\natexlab{b}})Tyurin, Pozzi, Ilin, and Richt{\'a}rik]{tyurin2024shadowheart}
Alexander Tyurin, Marta Pozzi, Ivan Ilin, and Peter Richt{\'a}rik.
\newblock Shadowheart {SGD}: Distributed asynchronous {SGD} with optimal time complexity under arbitrary computation and communication heterogeneity.
\newblock \emph{arXiv preprint arXiv:2402.04785}, 2024{\natexlab{b}}.

\bibitem[Woodworth et~al.(2018)Woodworth, Wang, Smith, McMahan, and Srebro]{woodworth2018graph}
Blake~E Woodworth, Jialei Wang, Adam Smith, Brendan McMahan, and Nati Srebro.
\newblock Graph oracle models, lower bounds, and gaps for parallel stochastic optimization.
\newblock \emph{Advances in Neural Information Processing Systems}, 31, 2018.

\bibitem[Zheng et~al.(2017)Zheng, Meng, Wang, Chen, Yu, Ma, and Liu]{zheng2017asynchronous}
Shuxin Zheng, Qi~Meng, Taifeng Wang, Wei Chen, Nenghai Yu, Zhi-Ming Ma, and Tie-Yan Liu.
\newblock Asynchronous stochastic gradient descent with delay compensation.
\newblock In \emph{International conference on machine learning}, pp.\  4120--4129. PMLR, 2017.

\end{thebibliography}
\bibliographystyle{iclr2025_conference}

\appendix

\newpage

\appendix

\tableofcontents

\newpage

\section{Proof Techniques}
\label{sec:proof_techniques}

\subsection{Proof techniques in the homogeneous setup}
The proof of the upper bound (Theorem~\ref{cor:max_time_params}) is relatively simple and uses only \eqref{eq:required_time}. The proof of the lower bound (Theorem~\ref{theorem:random_lower_bound}) is standard at the beginning: we assume that the workers store the ``worst-case'' function from \citep{carmon2020lower} and have access to oracles that calculate the exact gradient but zero out the last {non-zero} coordinate with some probability \citep{arjevani2022lower}. The next steps are new and can be briefly described in the following way. The workers calculate in parallel; thus, they can calculate at most $\sum_{i=1}^n \flr{V_i(t)}$ stochastic gradients by a time $t.$ At the same time, the oracles zero out the last coordinate with a probability $p$ using i.i.d. Bernoulli random variables. Therefore, the workers cannot get a point with a non-zero first coordinate earlier than $t_1 \eqdef \min\left\{t \geq 0 \, : \, \sum_{i=1}^n \flr{V_i(t)} \geq \eta_1\right\}$ seconds, {where $\{\eta_k\}_{k=1}^{T}$ are i.i.d. geometric random variables.} Using the same reasoning, the workers cannot get a point with a non-zero $k$\textsuperscript{th} coordinate earlier than $t_k \eqdef \min\left\{t \geq 0 \, : \, \sum_{i=1}^n \flr{V_i(t) - V_i(t_{k-1})} \geq \eta_k\right\}$ seconds, and $T \approx \nicefrac{L \Delta}{\varepsilon}.$ With high probability, the large chunk (at least a quarter of $T \approx \nicefrac{L \Delta}{\varepsilon}$) of $\{\eta_k\}$ is not significantly smaller than $\nicefrac{1}{p} \approx \max\left\{\ceil{\nicefrac{\sigma^2}{\varepsilon}}, 1\right\}.$ Finally, with high probability, at least a quarter of indices from the set $[T]$ satisfy 
\begin{align}
t_k \geq \min\left\{t \geq 0 \, : \, \sum_{i=1}^n \flr{V_i(t) - V_i(t_{k-1})} \gtrapprox \max\left\{\ceil{\nicefrac{\sigma^2}{\varepsilon}}, 1\right\}\right\}.
\label{eq:FCLlzaEfSktbKbyn}
\end{align}

{Note that all lower bounds in stochastic optimization ultimately reduce to the concentration analysis of the sum of random variables. \citet{tyurin2023optimal} approach this by analyzing the sum $\sum_{i=1}^T \min_{j \in [n]} \tau_j \eta_{ij},$ where $\eta_j$ are i.i.d. geometric random variables. In our case, we cannot directly apply this reduction anymore because the computation powers vary over time. Therefore, we found a non-trivial modification: we have to reduce the problem to the concentration analysis of the sum of indicators: $\sum_{j=1}^T \mathbb{I} [\eta_j > \frac{1}{p}]$ and investigate this sum, which represents the number of indices that satisfy \eqref{eq:FCLlzaEfSktbKbyn}.}

\subsection{Proof techniques in the heterogeneous setup}
The proofs in the heterogeneous setup are more technical, so we suggest first understanding the idea in the homogeneous setting.
Unlike the homogeneous setting, where all workers have access to stochastic gradients of the same function, the heterogeneous setting offers more freedom in designing the worst-case construction. We can allocate the worst-case functions from \citep{carmon2020lower} in almost any desired manner. We consider $S$ functions $\{h_j\}$ such that $h_j(x)\,:\,\R^{S \times T} \to \R$ and $h_j$ depends only on the $j$\textsuperscript{th} block $x_j \in \R^T$ of $x = [x_1, \dots, x_S] \in \R^{S \times T},$ where $T \approx L \Delta / \varepsilon,$ and consider the optimization problem with $f(x) = \nicefrac{1}{n} \sum_{j=1}^S h_j(x).$ As usual, the designed oracles zero out the last non-zero coordinate of calculated gradients, and an algorithm cannot find an $\varepsilon$-solution before at least roughly half of the functions $\{h_j\}$ are ``solved,'' requiring non-zero values in the last coordinates of the corresponding blocks. The main question is how to distribute the functions $\{h_j\}$ among the workers. Intuitively, the slower a worker, the more functions we want to assign to this worker. This intuition works, and in Theorem~\ref{theorem:random_lower_bound_heter}, we take the first $K$ ``parts'' of all the functions $\{h_j\},$ and assign the ``parts'' of the first $c_1 / V_{1}\left(\bar{t}_{1}\right)$ functions to the first worker, the ``parts'' of the second $c_1 / V_{2}\left(\bar{t}_{1}\right)$ functions to the second worker, and so forth, where $\bar{t}_{1} \approx \underline{t}_1$ from \eqref{eq:iTLiJe_random}, $c_1$ is a constant such that $\sum_{i=1}^n c_1 / V_{i}\left(\bar{t}_{1}\right) = S.$ The next $K$ ``parts'' we assign proportionally to $c_2 / (V_{i}\left(\bar{t}_{2}\right) - V_{i}\left(\bar{t}_{1}\right)),$ and so forth. In total, the allocation of the functions $\{h_j\}$ is dynamic since one function can be stored on many workers. For all $j \in [S],$ the ``parts'' of $h_j$ can be distributed among different workers according to $\{V_i\}.$ By taking the appropriate values $S, K,$ and other parameters, we can ensure the lower bound in Theorem~\ref{theorem:random_lower_bound_heter} holds.

However, Theorem~\ref{theorem:random_lower_bound_heter} is only tight up to logarithmic factors because the allocation of $\{h_j\}$ is \emph{predefined}. In response, we propose a construction where the functions $\{h_j\}$ are allocated based on the randomness we receive from the oracles that calculate stochastic gradients in the proof of Theorem~\ref{theorem:random_lower_bound_heter_random}. The idea is to track the Bernoulli random variables, which zero out the last coordinates, and use them in the construction. We ensure that the workers still receive unbiased and $\sigma$--variance bounded stochastic gradients.

\newpage

\section{Proof of Theorem~\ref{cor:max_time_params}}

\THEOREMMAINSTATICTIME*

\begin{proof}
  From Theorem~\ref{thm:sgd_homog}, we know that Method~\ref{alg:alg_server} converges after 
  \begin{align*}
    \ceil{\frac{24 L \Delta}{\varepsilon}}
  \end{align*}
  iterations. The algorithm waits for $S = \max\{\ceil{\nicefrac{\sigma^2}{\varepsilon}}, 1\}$ stochastic gradients from all the workers in every iteration. The workers work in parallel and after $t$ seconds they guarantee to calculate 
  \begin{align*}
    \sum_{i=1}^n \flr{V_i(t)}
  \end{align*}
  stochastic gradients (see the discussion of the universal computation model in Section~\ref{sec:setup}). It means they will calculate the first $S = \max\{\ceil{\nicefrac{\sigma^2}{\varepsilon}}, 1\}$ stochastic gradients after at most 
  \begin{align*}
    \bar{t}_1 \eqdef \min\left\{t \geq 0 \, : \, \sum_{i=1}^n \flr{V_i(t)} \geq \max\left\{\ceil{\frac{\sigma^2}{\varepsilon}}, 1\right\}\right\},
  \end{align*}
  seconds, where the $\min$ operation is well-defined due to Lemma~\ref{lemma:min_inf}. After at most $\bar{t}_1$ seconds, the algorithm stops the calculations in the workers and asks them to start the calculation of a new batch of $S$ stochastic gradients that will take at most 
  \begin{align*}
    \bar{t}_2 \eqdef \min\left\{t \geq 0 \, : \, \sum_{i=1}^n \flr{V_i(t) - V_i(\bar{t}_1)} \geq \max\left\{\ceil{\frac{\sigma^2}{\varepsilon}}, 1\right\}\right\},
  \end{align*}
  seconds because $\flr{V_i(t) - V_i(\bar{t}_1)}$ is the number of stochastic gradients that worker $i$ can calculate from time $\bar{t}_1$ to time $t$ (see \eqref{eq:required_time}). Using the same reasoning, it will take at most $\bar{t}_{\ceil{\frac{24 L \Delta}{\varepsilon}}}$ seconds to finish all calculations.
\end{proof}

\section{Proof of Theorem~\ref{cor:max_time_params_heter}}

\THEOREMMAINSTATICTIMEHETER*

\begin{proof}
  From Theorem~\ref{thm:sgd_heterog}, we know that Method~\ref{alg:alg_server_heterog} converges after 
  \begin{align*}
    \ceil{\frac{24 L \Delta}{\varepsilon}}
  \end{align*}
  iterations. In the first iteration, the algorithm waits for the moment when
  \begin{align*}
    \frac{1}{n} \sum_{i=1}^n \frac{1}{B_i} \leq \frac{n}{S}.
  \end{align*}
  Since
  \begin{align*}
    S = \max\left\{\ceil{\frac{\sigma^2}{\varepsilon}}, n\right\} \leq \max\left\{\frac{2 \sigma^2}{\varepsilon}, n\right\},
  \end{align*}
  we get
  \begin{align*}
    \frac{n}{S} \geq \min\left\{\frac{n \varepsilon}{2 \sigma^2}, 1\right\}.
  \end{align*}
  According to the computation model, after $t$ seconds, worker $i$ can calculate 
  \begin{align*}
    B_i = \flr{V_i(t)}
  \end{align*}
  stochastic gradients meaning that 
  \begin{align*}
    \frac{1}{n} \sum_{i=1}^n \frac{1}{B_i} = \frac{1}{n} \sum_{i=1}^n \frac{1}{\flr{V_i(t)}}.
  \end{align*}
  Therefore, the algorithm exits the first iteration after at most 
  \begin{align*}
    \bar{t}_1 
    &\eqdef \min\left\{t \geq 0 \, : \, \frac{1}{n} \sum_{i=1}^n \frac{1}{\flr{V_i(t)}} \leq \min\left\{\frac{n \varepsilon}{2 \sigma^2}, 1\right\}\right\} \\
    &= \min\left\{t \geq 0 \, : \, \left(\frac{1}{n} \sum_{i=1}^n \frac{1}{\flr{V_i(t)}}\right)^{-1} \geq \max\left\{\frac{2 \sigma^2}{n \varepsilon}, 1\right\}\right\}
  \end{align*}
  seconds, where we use Lemma~\ref{lemma:min_inf}.

  The second iteration will start at least after $\bar{t}_1$ seconds. Since, after $t$ seconds, worker $i$ can calculate at least
  \begin{align*}
    \flr{V_i(t) - V_i(\bar{t}_1)}
  \end{align*}
  stochastic gradients in the second iteration. Using the same reasoning as in the first iteration, the algorithm exits the second iteration after at most 
  \begin{align*}
    \bar{t}_2 \eqdef \min\left\{t \geq 0 \, : \, \left(\frac{1}{n} \sum_{i=1}^n \frac{1}{\flr{V_i(t) - V_i(\bar{t}_1)}}\right)^{-1} \geq \max\left\{\frac{2 \sigma^2}{n \varepsilon}, 1\right\}\right\}
  \end{align*}
  seconds. We can continue and show that it will take at most $\bar{t}_{\ceil{\frac{24 L \Delta}{\varepsilon}}}$ seconds to finish all calculations.
\end{proof}

\section{Proof of Theorem~\ref{theorem:random_lower_bound}}
\label{sec:homog_proof}

In our lower bound proofs, we employ the following well-known function.
Let us define
\begin{align*}
    \textnormal{prog}(x) \eqdef \max \{i \geq 0\,|\,x_i \neq 0\} \quad (x_0 \equiv 1).
\end{align*}

For any $T \in \N,$ \citet{carmon2020lower,arjevani2022lower} define a function $F_T \,:\, \R^T \to \R$ such that
\begin{align}
    \label{eq:worst_case}
    F_T(x) = -\Psi(1) \Phi(x_1) + \sum_{i=2}^T \left[\Psi(-x_{i-1})\Phi(-x_i) - \Psi(x_{i-1})\Phi(x_i)\right],
\end{align}
where $x_i$ is the $i$\textsuperscript{th} coordinate of a vector $x \in \R^d$ and
\begin{align*}
    \Psi(x) = \begin{cases}
        0, & x \leq 1/2, \\
        \exp\left(1 - \frac{1}{(2x - 1)^2}\right), & x \geq 1/2,
    \end{cases}
    \quad\textnormal{and}\quad
    \Phi(x) = \sqrt{e} \int_{-\infty}^{x}e^{-\frac{1}{2}t^2}dt.
\end{align*}

In the proofs, we will only use the results from the following lemma.

\begin{lemma}[\cite{carmon2020lower, arjevani2022lower}]
    \label{lemma:worst_function}
    The function $F_T$ satisfies:
    \begin{enumerate}
        \item $F_T(0) - \inf_{x \in \R^T} F_T(x) \leq \Delta^0 T,$ where $\Delta^0 = 12.$
        \item The function $F_T$ is $l_1$--smooth, where $l_1 = 152.$
        \item For all $x \in \R^T,$ $\norm{\nabla F_T(x)}_{\infty} \leq \gamma_{\infty},$ where $\gamma_{\infty} = 23.$
        \item For all $x \in \R^T,$ $\textnormal{prog}(\nabla F_T(x)) \leq \textnormal{prog}(x) + 1.$
        \item For all $x \in \R^T,$ if $\textnormal{prog}(x) < T,$ then $\norm{\nabla F_T(x)} > 1.$
    \end{enumerate}
\end{lemma}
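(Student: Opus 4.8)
The plan is to reduce all five properties to elementary one-variable calculus on the scalar ingredients $\Psi$ and $\Phi$, following \citet{carmon2020lower,arjevani2022lower}. First I would record the facts I will reuse: $\Phi$ is smooth and strictly increasing with $0 < \Phi \le \sqrt{2\pi e}$, $\Phi'(x) = \sqrt{e}\,e^{-x^2/2} \in (0,\sqrt{e}\,]$, and $\sup_x|\Phi''(x)| = \sqrt{e}\,e^{-1/2} = 1$; and $\Psi$ is nonnegative, nondecreasing, bounded by $e$, is $C^\infty$ on all of $\R$ (the exponential branch and all its derivatives tend to $0$ as $x \downarrow 1/2$, so the two branches glue smoothly), satisfies $\Psi(x) = \Psi'(x) = \Psi''(x) = 0$ for $x \le 1/2$ and $\Psi(1) = 1$, and has $\Psi'$, $\Psi''$ bounded by explicit constants (obtained by maximizing $\Psi(x)(2x-1)^{-3}$, $\Psi(x)(2x-1)^{-6}$, etc.\ on $(1/2,\infty)$, where the Gaussian-type decay of $\Psi$ at $1/2$ beats the polynomial blow-up). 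I will also use the simple but crucial observation that, since $\Psi$ and $\Psi'$ vanish on $(-\infty,1/2]$, at most one of each conjugate pair $\{\Psi(x_i),\Psi(-x_i)\}$ and $\{\Psi'(x_i),\Psi'(-x_i)\}$ is nonzero. Differentiating \eqref{eq:worst_case}, every entry of $\nabla F_T(x)$ is a finite sum of terms of the two shapes $\Psi(\pm x_{i-1})\Phi'(\pm x_i)$ and $\Psi'(\pm x_i)\Phi(\pm x_{i+1})$, each coupling only consecutive coordinates.

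Property 4 is then immediate: if $\textnormal{prog}(x) = j$ and $i \ge j+2$, then $x_{i-1} = x_i = 0$, so every term of $\partial F_T/\partial x_i$ carries a factor $\Psi(0) = 0$ or $\Psi'(0) = 0$ and hence vanishes, giving $\textnormal{prog}(\nabla F_T(x)) \le j+1$. Property 1 follows by bounding the oscillation of each of the $T$ summands of \eqref{eq:worst_case}: for $i \ge 2$ the $i$-th summand is $0$ at $x = 0$ and bounded in absolute value by $e\sqrt{2\pi e}$ (only one of its two products can be active), while the first summand $-\Psi(1)\Phi(x_1) = -\Phi(x_1)$ ranges in $[-\sqrt{2\pi e}, 0)$; summing the per-coordinate oscillations gives $F_T(0) - \inf F_T \le \sqrt{2\pi e} + (T-1)\,e\sqrt{2\pi e} \le 12\,T$.

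Properties 2 and 3 are entrywise plug-ins followed by a banded-matrix step. Using the ``at most one active'' observation, $\norm{\nabla F_T(x)}_\infty \le \norm{\Psi}_\infty\norm{\Phi'}_\infty + \norm{\Psi'}_\infty\norm{\Phi}_\infty = e\sqrt{e} + \norm{\Psi'}_\infty\sqrt{2\pi e} \le \gamma_\infty = 23$. The Hessian $\nabla^2 F_T(x)$ is tridiagonal, with diagonal entries controlled by $\norm{\Psi}_\infty\norm{\Phi''}_\infty$ and $\norm{\Psi''}_\infty\norm{\Phi}_\infty$ and off-diagonal entries by $\norm{\Psi'}_\infty\norm{\Phi'}_\infty$; bounding the operator norm by the largest absolute row sum (Gershgorin) and inserting the explicit constants yields $\norm{\nabla^2 F_T(x)} \le l_1 = 152$.

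Property 5 is the one that genuinely uses the ``robust zero-chain'' design, and I expect the main obstacle of the whole lemma to be pinning down $\norm{\Psi'}_\infty$ and $\norm{\Psi''}_\infty$ tightly enough for the constants $23$ and $152$, plus checking the $C^\infty$ gluing at $x = 1/2$. Set $j \eqdef \max\{i \ge 0 : |x_i| \ge 1\}$ (with $x_0 \equiv 1$, so this set is nonempty); since $j \le \textnormal{prog}(x) < T$, the coordinate $j+1 \le T$ exists and $|x_{j+1}| < 1$. Reading $\partial F_T/\partial x_{j+1}$ off \eqref{eq:worst_case}, every term other than $-\bigl(\Psi(x_j) + \Psi(-x_j)\bigr)\Phi'(x_{j+1})$ has the form $-\Psi'(\pm x_{j+1})\Phi(\pm x_{j+2}) \le 0$, so $\partial F_T/\partial x_{j+1} \le -\bigl(\Psi(x_j) + \Psi(-x_j)\bigr)\Phi'(x_{j+1})$. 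Now $|x_j| \ge 1$ forces $x_j \ge 1$ or $-x_j \ge 1$, so $\Psi(x_j) + \Psi(-x_j) \ge \Psi(1) = 1$, while $|x_{j+1}| < 1$ gives $\Phi'(x_{j+1}) = \sqrt{e}\,e^{-x_{j+1}^2/2} > \sqrt{e}\,e^{-1/2} = 1$; hence $\partial F_T/\partial x_{j+1} < -1$, and therefore $\norm{\nabla F_T(x)} \ge \norm{\nabla F_T(x)}_\infty > 1$.
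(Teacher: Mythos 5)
Your proposal is correct, and it reconstructs essentially the argument behind this lemma: the paper does not prove it but imports it from \citet{carmon2020lower,arjevani2022lower}, whose proof is exactly this elementary calculus on $\Psi,\Phi$ (bounded oscillation per summand for the $\Delta^0 T$ bound, the ``at most one of $\Psi(\pm x_i)$ active'' observation, tridiagonal Hessian with row-sum/Gershgorin for $l_1=152$, entrywise bounds for $\gamma_\infty=23$, the zero-chain cancellation, and the largest-index-with-$|x_j|\ge 1$ argument using $\Phi'(x_{j+1})>\sqrt{e}\,e^{-1/2}=1$ for the robust property). The only loose ends are the explicit numerical values $\norm{\Psi'}_\infty=\sqrt{54/e}\approx 4.46$ and $\norm{\Psi''}_\infty\le 32.5$, which you correctly flag and which indeed make your row-sum bound come out below $152$ and your gradient bound below $23$.
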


We are ready to prove our first main result.

\RANDOMLOWERBOUNDFULL*

\begin{proof}
  $ $\newline
  \textbf{(Part 1).} In the first part of the proof we use the same idea as in \citep{carmon2020lower,arjevani2022lower,tyurin2023optimal,huang2022lower,lu2021optimal}. 
  We will construct a ``worst-case'' function. Let us take any $\lambda > 0,$ $T \in \N$ and take the function $$f(x) \eqdef \frac{L \lambda^2}{l_1} F_T\left(\frac{x}{\lambda}\right).$$ We have to show that $f$ satisfy Assumptions~\ref{ass:lipschitz_constant}, \ref{ass:lower_bound} and $f(0) - f^* \leq \Delta.$ Indeed,
      \begin{align*}
          \norm{\nabla f(x) - \nabla f(y)} = \frac{L \lambda}{l_1} \norm{\nabla F_T\left(\frac{x}{\lambda}\right) - \nabla F_T\left(\frac{y}{\lambda}\right)} \leq L \lambda \norm{\frac{x}{\lambda} - \frac{y}{\lambda}} = L \norm{x - y} \quad \forall x, y \in \R^d,
      \end{align*}
      where $l_1$--smoothness of $F_T$ (Lemma~\ref{lemma:worst_function}).
      Let us take $$T = \left\lfloor\frac{\Delta l_1}{L \lambda^2 \Delta^0}\right\rfloor,$$ then
      \begin{align*}
          f(0) - \inf_{x \in \R^T} f(x) = \frac{L \lambda^2}{l_1} (F_T\left(0\right) - \inf_{x \in \R^T} F_T(x)) \leq \frac{L \lambda^2 \Delta^0 T}{l_1} \leq \Delta.
      \end{align*}

      Next, we construct a stochastic gradient mapping that satisfy Assumption~\ref{ass:stochastic_variance_bounded}. As in \citep{arjevani2022lower}, for all $i \in [n],$ let us take
      \begin{align}
        \label{eq:NUZCLBwuSkjXZZmL}
        [g_i(x; \xi, t)]_j \eqdef [\nabla f(x)]_j \left(1 + \mathbbm{1}\left[j > \textnormal{prog}(x)\right]\left(\frac{\xi_{j,m}}{p} - 1\right)\right) \quad \forall x \in \R^T,
      \end{align} and $\{\xi_{j}\}$ are i.i.d. from $\textnormal{Bernouilli}(p)$ for all $i \in [n],$ where $p \in (0, 1].$ We denote $[v]_j$ as the $j$\textsuperscript{th} index of a vector $v \in \R^T.$
      This mapping satisfy Assumption~\ref{ass:stochastic_variance_bounded} since
  $$\Exp{[g_i(x; \xi, t)]_j} = [\nabla f(x)]_j \left(1 + \mathbbm{1}\left[i > \textnormal{prog}(x)\right]\left(\frac{\Exp{\xi_j}}{p} - 1\right)\right) = [\nabla f(x)]_j$$ for all $j \in [T],$ and
  \begin{align*}
      \Exp{\norm{g_i(x; \xi, t) - \nabla f(x)}^2} \leq \max_{j \in [T]} \left|[\nabla f(x)]_j\right|^2\Exp{\left(\frac{\xi_j}{p} - 1\right)^2}
  \end{align*}
  because the difference is non-zero only in one coordinate. Therefore
  \begin{align*}
      \Exp{\norm{g_i(x; \xi, t) - \nabla f(x)}^2} &\leq \frac{\norm{\nabla f(x)}_{\infty}^2(1 - p)}{p} = \frac{L^2 \lambda^2 \norm{\nabla F_T\left(\frac{x}{\lambda}\right)}_{\infty}^2(1 - p)}{l_1^2 p} \\
      &\leq \frac{L^2 \lambda^2 \gamma_{\infty}^2 (1 - p)}{l_1^2 p} \leq \sigma^2,
  \end{align*}
  where we take into account Lemma~\ref{lemma:worst_function} and choose
  $$p = \min\left\{\frac{L^2 \lambda^2 \gamma_{\infty}^2}{\sigma^2 l_1^2}, 1\right\}.$$

  We also choose
  $$\lambda = \frac{\sqrt{2 \varepsilon} l_1}{L}$$
  to ensure that
  \begin{align*}
    \norm{\nabla f(x)}^2 = \frac{L^2 \lambda^2}{l_1^2}\norm{\nabla F_T \left(\frac{x}{\lambda}\right)}^2 = 2 \varepsilon \norm{\nabla F_T \left(\frac{x}{\lambda}\right)}^2
  \end{align*} 
  for all $x \in \R^T.$ Using Lemma~\ref{lemma:worst_function}, if $\textnormal{prog}(x) < T,$ then $\norm{\nabla F_T(x)} > 1.$ Thus
  \begin{align}
    \label{eq:euWrjZuZwaKrj}
    \norm{\nabla f(x)}^2 > 2 \varepsilon \mathbbm{1}\left[\textnormal{prog}(x) < T\right]
  \end{align}
  Using the choice of $\lambda,$ one can easily show that
  \begin{align}
    \label{eq:qFafEhpjZkvv}
    T = \left\lfloor\frac{\Delta L}{2 \varepsilon l_1 \Delta^0}\right\rfloor
  \end{align}
  and
  \begin{align}
    \label{eq:qFafEhpjZkvvxx}
    p = \min\left\{\frac{2 \varepsilon \gamma_{\infty}^2}{\sigma^2}, 1\right\}.
  \end{align}

  The inequality \eqref{eq:euWrjZuZwaKrj} implies
  \begin{align}
      \label{eq:aux_part_3_homog}
      \inf_{k \in S_t} \norm{\nabla f(x^k)}^2 > 2 \varepsilon \inf_{k \in S_t} \mathbbm{1}\left[\textnormal{prog}(x^k) < T\right],
  \end{align}
  where $\{x^k\}_{k=0}^{\infty}$ are defined in Protocol~\ref{alg:time_multiple_oracle_protocol}.
  $ $\newline
  \textbf{(Part 2).} The last inequality in \eqref{eq:aux_part_3_homog} says that if an algorithm wants to find an $\varepsilon$--stationary point of the function $f,$ then it is necessary to return a point $x^k$ such that the last coordinate of $x^k$ is not zero. All algorithms start with the point $x^0 = 0,$ and the only way to discover a new non-zero coordinate is through the oracles \eqref{eq:oracle_inside_random_heter} since the family of algorithms $\cA_{\textnormal{zr}}$ is zero-respecting. The function $f$ is a zero-chain \citep{arjevani2022lower} meaning that $\textnormal{prog}(\nabla F_T(x)) \leq \textnormal{prog}(x) + 1$ for all $x \in \R^T$ (Lemma~\ref{lemma:worst_function}). Therefore, the oracles can reveal the next non-zero coordinate with the probability $p$ due the construction \eqref{eq:NUZCLBwuSkjXZZmL}.
  
  For all $i \in [n],$ oracle $O_{i}$ emulates the behavior of a real computation process that can calculate at most 
  \begin{align*}
    \flr{V_i(t_1) - V_i(t_0)}
  \end{align*}
  stochastic gradients in the time interval $[t_0, t_1].$ Effectively, the condition $V_i(t) - V_i(s_t) \geq 1$ ensures that sufficient time passes before worker $i$ receives a new stochastic gradient.

  All workers work in parallel and ask the oracles to return new stochastic gradients. Thus, for all $t \geq 0,$ in the interval $[0, t],$ all workers can can calculate at most 
  \begin{align*}
    \sum_{i=1}^n \flr{V_i(t)}
  \end{align*}
  stochastic gradients. At the same time, the stochastic mapping \eqref{eq:NUZCLBwuSkjXZZmL} is constructed so that the last potentially non-zero coordinate is zeroed out using i.i.d. Bernoulli random variables with the parameter $p$. All workers have to wait for the moment when one of the oracles samples a Bernoulli random variables equals to $1.$ Therefore, the workers have to calculate at least $\eta_1$ stochastic gradients, where $\eta_1$ is a \emph{geometric random variable} with the parameter $p$. Finally, we can conclude that the workers can progress to the first non-zero coordinate after at least
  \begin{align*}
    t_1 \eqdef \inf\left\{t \geq 0 \, : \, \sum_{i=1}^n \flr{V_i(t)} \geq \eta_1\right\} = \min\left\{t \geq 0 \, : \, \sum_{i=1}^n \flr{V_i(t)} \geq \eta_1\right\}
  \end{align*}
  seconds, where we use Lemma~\ref{lemma:min_inf}. In order to get a second non-zero coordinate, the workers should continue calculating stochastic gradients at points with the progress equals to one. Using the same reasoning, the workers can progress to the second non-zero coordinate after at least
  \begin{align*}
    t_2 \eqdef \min\left\{t \geq 0 \, : \, \sum_{i=1}^n \flr{V_i(t) - V_i(t_1)} \geq \eta_2\right\}
  \end{align*}
  seconds, where $\eta_2$ is a \emph{geometric random variable} with the parameter $p.$ Because worker $i$ first gets a point with the progress equals to one, which takes at least $t_1$ seconds, and then can calculate at most 
  \begin{align*}
    \flr{V_i(t) - V_i(t_1)}
  \end{align*}
  stochastic gradients by a time $t \geq 0.$ We continue: let us define 
  \begin{align*}
    t_i \eqdef \min\left\{t \geq 0 \, : \, \sum_{i=1}^n \flr{V_i(t) - V_i(t_{i-1})} \geq \eta_i\right\} \qquad (t_{0} \equiv 0),
  \end{align*}
  and take i.i.d. $\{\eta_i\}_{i=1}^T$ geometric random variables with the probability $p.$ We can conclude that all algorithms from $\cA_{\textnormal{zr}}$ require at least $t_T$ seconds to get a point where the $T$\textsuperscript{th} coordinate is non-zero. \\
  \textbf{(Part 3).} \\ 
  It is left to find a concentration bound for $t_T.$ Using Lemma~\ref{lemma:concentration_sum} with $p_{i, \eta_1, \dots, \eta_{i - 1}} = p$ for all $i \in [T],$ we have 
  \begin{align*}
    \Prob{\sum_{i=1}^{T} \mathbbm{1}\left[\eta_i > \frac{1}{4 p}\right] \leq \frac{T}{2} + \log \delta} \leq \delta
  \end{align*}
  for all $\delta \in (0, 1]$ (Note: Clearly, Lemma~\ref{lemma:concentration_sum} is too redundant for the current case when $\{\eta_i\}_{i=1}^T$ are i.i.d., but we will use the lemma in other proof where the generality is justified).

  Since 
  \begin{align*}
    \frac{T}{2} + \log \frac{1}{2} \geq \flr{\frac{T - 1}{2}}.
  \end{align*}
  With a probability at least $\nicefrac{1}{2},$ there exist $\flr{\frac{T - 1}{2}}$ indices such that $\eta_i > \frac{1}{4 p},$ i.e.,
  \begin{align*}
    \abs{\left\{i \in [T] \,\middle|\, \eta_i > \frac{1}{4 p}\right\}} \geq \flr{\frac{T - 1}{2}}.
  \end{align*}
  With a probability at least $\nicefrac{1}{2},$ there exist $1 \leq j_1 < j_2 < \dots < j_{\flr{\frac{T - 1}{2}}} \leq T$ such that $\eta_{j_k} > \frac{1}{4 p}$ for all $k \in \left[\flr{\frac{T - 1}{2}}\right].$ Using a proof by induction, let us show that
  \begin{align}
    \label{eq:eJoUrKtJwazDaBY}
    t_{j_k} \geq \underline{t}_k \eqdef \min\left\{t \geq 0 \, : \, \sum_{i=1}^n \flr{V_i(t) - V_i(\underline{t}_{k - 1})} \geq \frac{1}{4 p}\right\} \qquad (\underline{t}_{0} = 0)
  \end{align}
  for all $k \in \left[\flr{\frac{T - 1}{2}}\right].$

  Recall the definition of $t_{j_1}.$ Using it, we have 
  \begin{align*}
    t_{j_1} = \min\left\{t \geq 0 \, : \, \sum_{i=1}^n \flr{V_i(t) - V_i(t_{j_1 - 1})} \geq \eta_{j_1}\right\}.
  \end{align*}
  Using $V_i(t_{j_1 - 1}) \geq 0$ and $\eta_{j_1} \geq \frac{1}{4 p},$ we get
  \begin{align*}
    \sum_{i=1}^n \flr{V_i(t)} \geq \sum_{i=1}^n \flr{V_i(t) - V_i(t_{j_1 - 1})} \geq \eta_{j_1} \geq \frac{1}{4 p}
  \end{align*}
  and 
  \begin{align*}
    t_{j_1} \geq \underline{t}_1 = \min\left\{t \geq 0 \, : \, \sum_{i=1}^n \flr{V_i(t)} \geq \frac{1}{4 p}\right\}.
  \end{align*}
  Thus, we have proved the base case. Assume that \eqref{eq:eJoUrKtJwazDaBY} holds for $k - 1.$ Note that
  \begin{align*}
    t_{j_k} = \min\left\{t \geq 0 \, : \, \sum_{i=1}^n \flr{V_i(t) - V_i(t_{j_{k} - 1})} \geq \eta_{j_k}\right\}.
  \end{align*}
  Using $\eta_{j_k} \geq \frac{1}{4 p},$ $t_{j_{k} - 1} \geq t_{j_{(k - 1)}},$ the induction assumption $t_{j_{(k - 1)}} \geq \underline{t}_{k - 1},$ and the fact that the functions $\{V_i\}$ are non-decreasing, we get
  \begin{align*}
    V_i(t_{j_{k} - 1}) \geq V_i(t_{j_{(k - 1)}}) \geq V_i(\underline{t}_{k - 1})
  \end{align*}
  and 
  \begin{align*}
    \sum_{i=1}^n \flr{V_i(t) - V_i(\underline{t}_{k - 1})} \geq \sum_{i=1}^n \flr{V_i(t) - V_i(t_{j_{k} - 1})} \geq \eta_{j_k} \geq \frac{1}{4 p}.
  \end{align*}
  Therefore
  \begin{align*}
    t_{j_k} \geq  \underline{t}_k = \min\left\{t \geq 0 \, : \, \sum_{i=1}^n \flr{V_i(t) - V_i(\underline{t}_{k - 1})} \geq \frac{1}{4 p}\right\}.
  \end{align*}
  In total, with a probability at least $\nicefrac{1}{2},$ due to \eqref{eq:qFafEhpjZkvv} and \eqref{eq:qFafEhpjZkvvxx}, we have
  \begin{align*}
    t_{T} \geq t_{j_{\flr{\frac{T - 1}{2}}}} \geq \underline{t}_{\flr{\frac{T - 1}{2}}} \geq \underline{t}_{\flr{c_1 \times \frac{L \Delta}{\varepsilon}}},
  \end{align*}
  where 
  \begin{align}
    \underline{t}_k \eqdef \min\left\{t \geq 0 \, : \, \sum_{i=1}^n \flr{V_i(t) - V_i(\underline{t}_{k - 1})} \geq c_2 \times \max\left\{\ceil{\frac{\sigma^2}{\varepsilon}}, 1\right\}\right\} \qquad (\underline{t}_{0} = 0)
  \end{align}
  and $c_1, c_2$ are universal constants. Recall that $t_T$ is a necessary number of seconds to get a point where the $T$\textsuperscript{th} coordinate is non-zero. Due to \eqref{eq:aux_part_3_homog},
  \begin{align*}
    \Exp{\inf_{k \in S_t} \norm{\nabla f(x^k)}^2} > 2 \varepsilon \Prob{t_{T} > t} \geq 2 \varepsilon \Prob{t_{T} \geq \underline{t}_{\flr{c_1 \times \frac{L \Delta}{\varepsilon}}}} \geq \varepsilon
  \end{align*}
  for 
  \begin{align*}
    t = \frac{1}{2} \times \underline{t}_{\flr{c_1 \times \frac{L \Delta}{\varepsilon}}}.
  \end{align*}
  The first inequality follows from the fact that if $t_{T} > t,$ then the set $S_t \eqdef \left\{k \in \N_0 \,|\,t^k \leq t\right\}$ contains the indices of iterations from Protocol~\ref{alg:time_multiple_oracle_protocol} where all returned by the algorithm points have $\textnormal{prog}(\cdot)$ less than $T.$
\end{proof}

\section{Proof of Theorem~\ref{theorem:random_lower_bound_heter}}

\RANDOMLOWERBOUNDFULLHETER*

\begin{proof}
  As in Theorem~\ref{theorem:random_lower_bound}, we base our proof on the function $F_T(x)$ from Section~\ref{sec:homog_proof}. Let us fix any algorithm $A \in \mathcal{A}_{\textnormal{zr}}.$ 
  First, we define $S \in \N$ functions such that, for all $j \in [S],$ we have $h_j(x)\,:\,\R^{S \times T} \to \R$ and
  \begin{align}
    \label{eq:func_g}
    h_j(x) = \frac{n L \lambda^2}{l_1} F_{T}\left(\frac{x_j}{\lambda}\right),
  \end{align}
  where $T,$ $S,$ and $\lambda$ are defined later. We assume that $x = [x_1, \dots, x_S] \in \R^{S \times T}.$ We define $x_j \in \R^{T}$ as the $j$\textsuperscript{th} block of a vector $x = [x_1, \dots, x_S] \in \R^{S \times T}.$ The function $h_j$ depends only on a subset of variables $x_j$ from $x.$ We construct a function $f\,:\,\R^{S \times T} \to \R$ such that
  \begin{align}
    \label{eq:GsQfHULIoFKD}
    f(x) = \frac{1}{n} \sum_{i=1}^S h_i(x).
  \end{align}
  While the final structure of the local stochastic functions $f_i$ is not yet defined, during the proof, we will ensure that \eqref{eq:GsQfHULIoFKD} holds.

  (\textbf{Step 1}: $f \in \cF_{\Delta, L}$) \\
  First, we show that $f$ satisfies Assumptions~\ref{ass:lipschitz_constant}, \ref{ass:lower_bound} and $f(0) - f^* \leq \Delta.$ Let us show that the function $f$ is $L$-smooth. Indeed, we have
    \begin{align*}
        \norm{\nabla f(x) - \nabla f(y)}^2 &= \frac{1}{n^2}\norm{\sum_{i=1}^S \left(\nabla h_i(x) - \nabla h_i(y)\right)}^2 = \frac{1}{n^2}\sum_{i=1}^S \norm{\nabla h_i(x) - \nabla h_i(y)}^2\\
        &= \frac{1}{n^2}\sum_{i=1}^S \norm{\frac{n L \lambda}{l_1} \nabla F_{T}\left(\frac{x_i}{\lambda}\right) - \frac{n L \lambda}{l_1} \nabla F_{T}\left(\frac{y_i}{\lambda}\right)}_i^2,
    \end{align*}
    where $\norm{\cdot}_j$ is the Euclidean norm w.r.t. $j$\textsuperscript{th} block. Then,
    \begin{align*}
      \norm{\nabla f(x) - \nabla f(y)}^2 
      &= \frac{L^2 \lambda^2}{l_1^2} \sum_{i=1}^S \norm{\nabla F_{T}\left(\frac{x_i}{\lambda}\right) - \nabla F_{T}\left(\frac{y_i}{\lambda}\right)}_i^2 \leq L^2 \sum_{i=1}^S \norm{x_i - y_i}_i^2 = L^2 \norm{x - y}^2,
  \end{align*}
  where the last inequality due to Lemma~\ref{lemma:worst_function}.
    Let us take 
    \begin{align}
      \label{eq:luOePAy}
      T = \left\lfloor \frac{\Delta l_1}{L \lambda^2 S \Delta^0} \right\rfloor
    \end{align}
    then
    \begin{align*}
        f(0) - \inf_{x \in \R^{S \times T}} f(x) = \frac{1}{n} \sum_{i=1}^S \frac{n L \lambda^2}{l_1} (F_{T}\left(0\right) - \inf_{x \in \R^{T}} F_{T}(x)) \leq \frac{L \lambda^2 S \Delta^0 T}{l_1} \leq \Delta,
    \end{align*}
    where the first inequality due to Lemma~\ref{lemma:worst_function}.

    (\textbf{Step 2}: Construction of local functions $f_i$) \\
    Each function $h_i(x)$ depends only on $T$ coordinates of the $i$\textsuperscript{th} block. We split these $T$ coordinates into $\bar{T} + 1$ groups with $\bar{T} \eqdef \flr{\nicefrac{T}{K}},$ where we choose $K$ later.
    For all $w \in [\bar{T} + 1],$ let us take any
    \begin{align}
      \label{eq:LGJeTrDgLlg}
      s_{w,0} \equiv 1 \leq s_{w,1} \leq \dots \leq s_{w,n - 1} \leq s_{w,n} \equiv S + 1.
    \end{align}
    It is convenient to define the length of each segment: $a_{w, i} \eqdef s_{w, i} - s_{w, (i - 1)}$ for all $w \in [\bar{T} + 1]$ and $i \in [n].$ Also, we take any times 
    \begin{align}
      \label{eq:LGJeTrDgLlgg}
      \bar{t}_0 \equiv 0 \leq \bar{t}_1 \leq \dots \leq \bar{t}_{\bar{T}} \leq \bar{t}_{\bar{T} + 1} \equiv \infty
    \end{align}
    associated with the coordinates' groups. We construct functions in the following way. For all $i \in [n],$ we define
    \begin{align*}
      &\widehat{f}_{i}(x) \eqdef \sum_{j=s_{1,(i-1)}}^{s_{1,i} - 1} \left(- \Psi(1)\Phi(x_{j,1}) + \dots + \Psi(-x_{j,K-1})\Phi(-x_{j,K}) - \Psi(x_{j,K-1})\Phi(x_{j,K})\right) \\
      &\quad+ \sum_{j=s_{2,(i-1)}}^{s_{2,i} - 1} \left(\Psi(-x_{j,K})\Phi(-x_{j,K+1}) - \Psi(x_{j,K})\Phi(x_{j,K+1}) + \dots \right.\\
      &\qquad\qquad\qquad\quad+\left. \Psi(-x_{j,2K - 1})\Phi(-x_{j,2 K}) - \Psi(x_{j,2 K-1})\Phi(x_{j,2 K})\right) \\
      &\quad+ \dots \\
      &\quad+ \sum_{j=s_{\bar{T},(i-1)}}^{s_{\bar{T},i} - 1} \left(\Psi(-x_{j,(\bar{T} - 1)K})\Phi(-x_{j,(\bar{T} - 1) K+1}) - \Psi(x_{j,(\bar{T} - 1)K})\Phi(x_{j,(\bar{T} - 1)K+1}) + \dots \right.\\
      &\qquad\qquad\qquad\quad+ \left.\Psi(-x_{j,\bar{T} K - 1})\Phi(-x_{j,\bar{T} K}) - \Psi(x_{j,\bar{T} K-1})\Phi(x_{j,\bar{T} K})\right) \\
      &\quad+ \sum_{j=s_{\bar{T} + 1,(i-1)}}^{s_{\bar{T} + 1,i} - 1} \left(\Psi(-x_{j,\bar{T} K})\Phi(-x_{j,\bar{T} K + 1}) - \Psi(x_{j,\bar{T} K})\Phi(x_{j,\bar{T} K + 1}) + \dots\right. \\
      &\qquad\qquad\qquad\quad+ \left.\Psi(-x_{j,T - 1})\Phi(-x_{j,T}) - \Psi(x_{j,T - 1})\Phi(x_{j,T})\right)
    \end{align*} 
    and 
    \begin{align*}
      f_{i}(x) \eqdef \frac{n L \lambda^2}{l_1} \widehat{f}_{i}\left(\frac{x}{\lambda}\right),
    \end{align*}
    where $x_{j,i}$ is the $i$\textsuperscript{th} coordinate of $x_j \in \R^{T}.$ 

    Let us explain the idea. For all $j \in [S],$ we take the function $h_j$ from \eqref{eq:func_g}, which consists of $T$ parts with the structure $\Psi(-x_{j,\cdot})\Phi(-x_{j,\cdot}) - \Psi(x_{j,\cdot})\Phi(x_{j,\cdot}),$ and distribute these parts between the workers according to the predefined segments $\{s_{w, i}\}.$ The first $K$ parts of the function $h_j$ will be stored in worker $i_1$ such that $s_{1, (i_1-1)} \leq j < s_{1, i_1},$ the second $K$ parts will be stored in worker $i_2$ such that $s_{2, (i_2-1)} \leq j < s_{2, i_2},$ and so on. One can easily show that $\sum_{j=1}^S h_j = \sum_{i=1}^n f_i.$ \\
    (\textbf{Step 3}: Time-dependent stochastic oracles) \\
    We now construct a stochastic oracle. Let us take 
    \begin{align}
      \label{eq:uOIGCRiWeKzxBT}
      p_{w,i} \eqdef \min\left\{\frac{a_{w,i} n^2 L^2 \lambda^2 \gamma_{\infty}^2}{\sigma^2 l_1^2}, 1\right\}
    \end{align}
    for all $w \in [\bar{T} + 1], i \in [n].$
    The stochastic mapping takes a point $x,$ a random variable $\xi,$ a time $t,$ and returns
    \begin{equation}
    \begin{aligned}
      &[g_i(x; \xi, t)]_{j,m} \eqdef [\nabla f_{i}(x)]_{j,m} \times \\
      &\times \left(1 + \mathbbm{1}\left[m > \textnormal{prog}(x_j) \wedge \flr{\frac{m - 1}{K}} + 1  = w(t)\right]\left(\frac{\xi_{j,m}}{p_{w(t),i}} - 1\right)\right) \quad \forall x \in \R^{S \times T},
      \label{eq:JizJACFEaGQaVyyLArn}
    \end{aligned}
    \end{equation}
    where $[x]_{j,m}$ is the $m$\textsuperscript{th} coordinate of the $j$\textsuperscript{th} block of $x \in \R^{S \times T},$ $\{\xi_{j,m}\}_{j,m}$ are i.i.d. from $\textnormal{Bernoulli}(p_{w(t),i}),$ and $w(t) \in [\bar{T}]$ is the index such that $\bar{t}_{(w(t) - 1)} \leq t < \bar{t}_{w(t)}.$ 
    
    The idea is almost the same as in \citep{arjevani2022lower}: we also zero out the last potentially non-zero coordinate with the probability $p_{w(t),i}.$ However, we only zero out a coordinate if it belongs to the parts from the set $\{(K - 1)w(t) + 1, \dots, K w(t)\},$ where $w(t)$ is associated with the current time interval $[\bar{t}_{(w(t) - 1)}, \bar{t}_{w(t)})$ where the current time $t$ belongs to.
    
    Then, $g_i(x; \xi, t)$ is unbiased because $\ExpSub{\xi}{\left(\frac{\xi_{j,m}}{p_{w(t),i}} - 1\right)} = 0$ and 
    \begin{align*}
      \ExpSub{\xi}{\norm{g_i(x; \xi, t ) - \nabla f_i(x)}^2} 
      &\leq \sum_{j=s_{w(t),(i-1)}}^{s_{w(t),i} - 1} \frac{n^2 L^2 \lambda^2 \norm{\nabla F_{T}\left(\frac{x_j}{\lambda}\right)}_{\infty}^2(1 - p_{w(t),i})}{l_1^2 p_{w(t),i}}
    \end{align*}
    because, due the condition $\flr{\frac{m - 1}{K}} + 1  = w(t),$ we only consider the blocks from the sum $\sum_{j=s_{w(t),(i-1)}}^{s_{w(t),i} - 1}.$ 
    Using Lemma~\ref{lemma:worst_function}, we have $\norm{\nabla F_{T}\left(x\right)}_{\infty}^2 \leq \gamma_{\infty}^2$ for all $x \in \R^T$ and 
    \begin{align*}
      \ExpSub{\xi}{\norm{g_i(x; \xi, t ) - \nabla f_i(x)}^2} 
      &\leq \frac{a_{w(t),i} n^2 L^2 \lambda^2 \gamma_{\infty}^2 (1 - p_{w(t),i})}{l_1^2 p_{w(t),i}} \leq \sigma^2,
    \end{align*}
    where the last inequality follows from the choice of $p_{w,i}$ in \eqref{eq:uOIGCRiWeKzxBT}.
  
    (\textbf{Step 4}: Analysis of Protocol)

    Using the definition of $f,$ we get
    \begin{align}
        \norm{\nabla f(x)}^2 &= \frac{1}{n^2} \sum_{i=1}^S \norm{\nabla h_i(x)}^2 = \sum_{i=1}^S \frac{L^2 \lambda^2}{l_1^2}\norm{\nabla F_{T} \left(\frac{x_i}{\lambda}\right)}^2 \nonumber \\ 
        &> \frac{L^2 \lambda^2}{l_1^2} \sum_{i=1}^S \mathbbm{1}[\textnormal{prog} (x_i) < T]
    \end{align}
    for all $x = [x_1, \dots, x_n] \in \R^{T}.$ In the last inequality, we use Lemma~\ref{lemma:worst_function}. Let us take 
    \begin{align}
      \label{eq:JpjVFT}
      \lambda = \sqrt{\frac{4 \varepsilon l_1^2}{L^2 S}}.
    \end{align}
    to ensure that 
    \begin{align}
      \label{eq:BXRkHgCjoYtBSYF}
      \inf_{k \in S_t} \norm{\nabla f(x^k)}^2 > \inf_{k \in S_t} \frac{4 \varepsilon}{S} \sum_{i=1}^S \mathbbm{1}[\textnormal{prog} (x^k_i) < T] \geq \frac{4 \varepsilon}{S} \sum_{i=1}^S \inf_{k \in S_t} \mathbbm{1}[\textnormal{prog} (x^k_i) < T],
    \end{align}
    where $x^k$ are points defined in Protocol~\ref{alg:time_multiple_oracle_protocol}.
    Using Markov's inequality, we get
    \begin{equation}
    \begin{aligned}
      &\Prob{\frac{4 \varepsilon}{S} \sum_{i=1}^S \inf_{k \in S_t} \mathbbm{1}[\textnormal{prog} (x_i^k) < T] \leq 2 \varepsilon} \\
      &=\Prob{\frac{1}{S} \sum_{i=1}^S \inf_{k \in S_t} \mathbbm{1}[\textnormal{prog} (x_i^k) < T] \leq \frac{1}{2}} \\
      &= \Prob{\frac{1}{S} \sum_{i=1}^S \sup_{k \in S_t} \mathbbm{1}[\textnormal{prog} (x_i^k) \geq T] \geq \frac{1}{2}} \\\
      &\leq 2 \Exp{\frac{1}{S} \sum_{i=1}^S \sup_{k \in S_t} \mathbbm{1}[\textnormal{prog} (x_i^k) \geq T]} = \frac{2}{S} \sum_{i=1}^S \Exp{\sup_{k \in S_t} \mathbbm{1}[\textnormal{prog} (x_i^k) \geq T]}.
      \label{eq:tmszmJWkf}
    \end{aligned}
    \end{equation}

    (\textbf{Step 5}: Bound on the expectations)

    In this step of the proof, we fix $j \in [S],$ and consider the function $h_j$ from \eqref{eq:func_g}.

    Recall that worker $i$ can calculate $\flr{V_{i}(t)}$ stochastic gradients by a time $t.$ Therefore, it takes at least 
    \begin{align*}
      \min\left\{t \geq 0 \, : \, \flr{V_i(t)} \geq \eta\right\} \geq V_i^{-1}(\eta)
    \end{align*}
    seconds to calculate $\eta \in \N$ stochastic gradients, where we use the definition \eqref{eq:inv}.

    By the construction of the functions $\{f_i\},$ the first $K$ parts of $h_j$ belong to worker $i_{1,j}$ such that $s_{1,i_{1,j}-1} \leq j < s_{1,i_{1,j}}.$ While the algorithm $A$ is returning times $t^k$ such that $\bar{t}_0 \equiv 0 \leq t^k < \bar{t}_1,$ by the construction of the stochastic mapping \eqref{eq:JizJACFEaGQaVyyLArn}, whenever the algorithm calls oracle $O_{i_{1,j}},$ the oracle zeros out the last potentially non-zero coordinate with the probability $p_{1,i_{1,j}}.$ However, due to the condition $\flr{\frac{m - 1}{K}} + 1  = w(t),$ the stochastic mapping will zero out the first $K$ coordinates only if $t^k < \bar{t}_1.$ Thus, the time required to progress to the $K$\textsuperscript{th} coordinate in the block $x_j$ is at least
    \begin{align*}
      \hat{t}_{1,j} \eqdef \min\left\{\bar{t}_1, V^{-1}_{i_{1,j}}\left(\sum_{v=1}^{K} \eta_{1,i_{1,j},v}\right)\right\}
    \end{align*}
    seconds, where $\{\eta_{1,i_{1,j},v}\}$ are i.i.d. geometric random variables with the probability $p_{1,i_{1,j}}.$ Because either the algorithm returns $t^k \geq \bar{t}_1,$ or it keeps returning $t^k < \bar{t}_1,$ but then $A$ should calculate at least $\sum_{v=1}^{K} \eta_{1,i_{1,j},v}$ stochastic gradients since the stochastic mapping zeros out the last potentially non-zero coordinate, and $A$ should wait $K$ times for the ``lucky'' ($\xi = 1$) draws of Bernoulli random variables.

    Using the same reasoning, for all $w \in [\bar{T}],$ it will take at least 
    \begin{align}
      \label{eq:UllzpGb}
      \hat{t}_{w,j} \eqdef \min\left\{\bar{t}_w, V^{-1}_{i_{w,j}}\left(\sum_{v=1}^{K} \eta_{w,i_{w,j},v} + V_{i_{w,j}}(\hat{t}_{w - 1,j})\right)\right\} \qquad (\hat{t}_{0,j} \equiv 0)
    \end{align}
    seconds to progress to the $w \times K$\textsuperscript{th} coordinate in the block $x_j,$
    where $\{\eta_{w,i_{w,j},v}\}$ are i.i.d. geometric random variables with the probability $p_{w,i_{w,j}},$ and $i_{w,j}$ is the index of the worker such that $s_{w,i_{w,j}-1} \leq j < s_{w,i_{w,j}}.$
    Because either the algorithm returns $t^k \geq \bar{t}_w,$ or it keeps returning $t^k < \bar{t}_w,$ but then $A$ should first progress to the $(w - 1)\times K$\textsuperscript{th} coordinate, which takes at least $\hat{t}_{w - 1,j}$ seconds, and then should calculate at least $\sum_{v=1}^{K} \eta_{w,i_{w,j},v}$ stochastic gradients. This will take at least 
    \begin{align*}
      &\min\left\{t \geq 0 \, : \, \flr{V_{i_{w,j}}(t) - V_{i_{w,j}}(\hat{t}_{w - 1,j})} \geq \sum_{v=1}^{K} \eta_{w,i_{w,j},v}\right\} \\
      &\overset{(\textnormal{cont. of }V_{i_{w,j}})}{=}\min\left\{t \geq 0 \, : \, V_{i_{w,j}}(t) - V_{i_{w,j}}(\hat{t}_{w - 1,j}) = \sum_{v=1}^{K} \eta_{w,i_{w,j},v}\right\} \\
      &\overset{\eqref{eq:inv}}{=}V^{-1}_{i_{w,j}}\left(\sum_{v=1}^{K} \eta_{w,i_{w,j},v} + V_{i_{w,j}}(\hat{t}_{w - 1,j})\right)
    \end{align*}
    seconds. 

    Using Lemma~\ref{lemma:many_geom}, with a probability at least $1 - \bar{T} e^{- K / 2},$ we have
    \begin{align*}
      \sum_{v=1}^{K} \eta_{w,i_{w,j},v} \geq \frac{K}{8 p_{w,i_{w,j}}}
    \end{align*}
    for all $w \in [\bar{T}].$
    Using these inequalities and \eqref{eq:UllzpGb}, we get
    \begin{align}
      \label{eq:YPBcANcxqHC}
      \hat{t}_{w,j} \geq \min\left\{\bar{t}_w, V^{-1}_{i_{w,j}}\left(\frac{K}{8 p_{w,i_{w,j}}} + V_{i_{w,j}}(\hat{t}_{w-1,j})\right)\right\},
    \end{align}
    for all $w \in [\bar{T}].$

    Note that $\{\bar{t}_w\}_{w=1}^{\bar{T}},$$\{s_{w,i}\}_{w \in [\bar{T} + 1], i \in [n]},$ and $S$ are free parameters with the conditions \eqref{eq:LGJeTrDgLlg} and \eqref{eq:LGJeTrDgLlgg}. Due to \eqref{eq:uOIGCRiWeKzxBT} and \eqref{eq:JpjVFT}, we have
    \begin{align*}
      p_{w,i} = \min\left\{\frac{4 \varepsilon \gamma_{\infty}^2 a_{w,i} n^2}{\sigma^2 S}, 1\right\}.
    \end{align*}
    Therefore, we can use Lemma~\ref{lemma:params}: we can find $\{\bar{t}_w\}_{w=1}^{\bar{T}},$$\{s_{w,i}\}_{w \in [\bar{T} + 1], i \in [n]},$ and $S$ such that $V^{-1}_{i}\left(\frac{K}{8 p_{1,i}}\right) \geq \bar{t}_1$ for all $i \in [n].$ 
    With a probability at least $1 - \bar{T} e^{- K / 2},$ with the chosen parameters, we have 
    \begin{align*}
      \hat{t}_{1,j} \geq \min\left\{\bar{t}_1, V^{-1}_{i_{1,j}}\left(\frac{K}{8 p_{1,i_{1,j}}}\right)\right\} \geq \min\left\{\bar{t}_1, \bar{t}_1\right\} = \bar{t}_1.
    \end{align*}

    Using a proof by induction, with a probability at least $1 - \bar{T} e^{- K / 2},$ let us prove that $\hat{t}_{w,j} \geq \bar{t}_w$ for all $w \in [\bar{T}]$ with the parameters from Lemma~\ref{lemma:params}. The base case has been proved. Assume that $$\hat{t}_{w - 1,j} \geq \bar{t}_{w - 1},$$ then, using \eqref{eq:YPBcANcxqHC}, we get
    \begin{align*}
      \hat{t}_{w,j} \geq \min\left\{\bar{t}_w, V^{-1}_{i_{w,j}}\left(\frac{K}{8 p_{w,i_{w,j}}} + V_{i_{w,j}}(\hat{t}_{w-1,j})\right)\right\} \geq \min\left\{\bar{t}_w, V^{-1}_{i_{w,j}}\left(\frac{K}{8 p_{w,i_{w,j}}} + V_{i_{w,j}}(\bar{t}_{w-1})\right)\right\}.
    \end{align*}
    In Lemma~\ref{lemma:params}, we show that $V^{-1}_{i}\left(\frac{K}{8 p_{w,i}} + V_{i}(\bar{t}_{w-1})\right) \geq \bar{t}_w$ for all $i \in [n].$ Thus
    \begin{align*}
      \hat{t}_{w,j} \geq \min\left\{\bar{t}_w, \bar{t}_w\right\} = \bar{t}_w.
    \end{align*}

    (\textbf{Step 6}: Choose a parameter $K$) \\
    In the previous step, we prove that, with a probability at least $1 - \bar{T} e^{- K / 2},$ the algorithm requires at least $\bar{t}_{\bar{T}}$ seconds to progress to the $K \times \bar{T}$\textsuperscript{th} coordinate ($K \times \bar{T} \leq T$), where $\bar{t}_{\bar{T}}$ is defined in in the proof of Lemma~\ref{lemma:params}.
    
    Let us take 
    \begin{align*}
      K = \flr{2 \log 4 T},
    \end{align*}
    then, with a probability at least $3 / 4,$ the algorithm will require at least $\bar{t}_{\bar{T}}$ seconds to get a non-zero last coordinate in the block $x_j.$ Thus
    \begin{align*}
      \Exp{\sup_{k \in S_t} \mathbbm{1}[\textnormal{prog} (x_j^k) \geq T]} \leq \frac{1}{4}
    \end{align*}
    for all $j \in [n]$ and for all $t \leq \frac{1}{2} \bar{t}_{\bar{T}}.$ We substitute these inequalities to \eqref{eq:tmszmJWkf} and \eqref{eq:BXRkHgCjoYtBSYF}, and get
    \begin{align*}
      \Prob{\frac{4 \varepsilon}{S} \sum_{i=1}^S \inf_{k \in S_t} \mathbbm{1}[\textnormal{prog} (x_i^k) < T] \leq 2 \varepsilon} \leq \frac{1}{2}
    \end{align*}
    and 
    \begin{align*}
      \Exp{\inf_{k \in S_t} \norm{\nabla f(x^k)}^2} > \Exp{\frac{4 \varepsilon}{S} \sum_{i=1}^S \inf_{k \in S_t} \mathbbm{1}[\textnormal{prog} (x^k_i) < T]} > 2 \varepsilon \Prob{\frac{4 \varepsilon}{S} \sum_{i=1}^S \inf_{k \in S_t} \mathbbm{1}[\textnormal{prog} (x_i^k) < T] > 2 \varepsilon} \geq \varepsilon
    \end{align*}
    for all 
    \begin{align*}
      t \leq \frac{1}{2} \widetilde{t}_{\bar{T}} \overset{\eqref{eq:TToceJIF}}{\leq} \frac{1}{2} \bar{t}_{\bar{T}},
    \end{align*}
    where \begin{align*}
      \bar{T} = \flr{\frac{T}{\flr{2 \log 4 T}}} \overset{\eqref{eq:luOePAy}, \eqref{eq:JpjVFT}}{\geq} \flr{\frac{c_1 \times \frac{L \Delta}{\varepsilon}}{\log \frac{L \Delta}{\varepsilon}}}
    \end{align*}
    for some universal constant $c_1,$ where we use the assumption $\varepsilon < c' L \Delta$ of the theorem. Finally, since $\widetilde{t}_{w} \geq \underline{t}_{w}$ for all $w \in [\bar{T}]$ with the chosen $K,$ where the later sequence is defined in \eqref{eq:iTLiJe},
    we can take
    \begin{align*}
      t = \frac{1}{2} \underline{t}_{\flr{\frac{c_1 \times \frac{L \Delta}{\varepsilon}}{\log \frac{L \Delta}{\varepsilon}}}}.
    \end{align*}
    \end{proof}

\section{Proof of Theorem~\ref{theorem:random_lower_bound_heter_random}}

\RANDOMLOWERBOUNDFULLHETERRANDOM*

\begin{proof}
  We base our proof on the function $F_T(x)$ from Section~\ref{sec:homog_proof}. Let us fix any algorithm $A \in \mathcal{A}_{\textnormal{zr}}.$ 
  First, we define $S \in \N$ functions such that, for all $j \in [S],$ we have $h_j(x)\,:\,\R^{S \times T} \to \R$ and
  \begin{align}
    \label{eq:func_g_random}
    h_j(x) = \frac{n L \lambda^2}{l_1} F_{T}\left(\frac{x_j}{\lambda}\right),
  \end{align}
  where $T,$ $S,$ and $\lambda$ are defined later. We assume that $x = [x_1, \dots, x_S] \in \R^{S \times T}.$ We define $x_j \in \R^{T}$ as the $j$\textsuperscript{th} block of a vector $x = [x_1, \dots, x_S] \in \R^{S \times T}.$ The function $h_j$ depends only on a subset of variables $x_j$ from $x.$ We construct a function $f\,:\,\R^{S \times T} \to \R$ such that
  \begin{align}
    \label{eq:GsQfHULIoFKD_random}
    f(x) = \frac{1}{n} \sum_{i=1}^S h_i(x).
  \end{align}
  While the final structure of the local stochastic functions $f_i$ is not yet defined, during the proof, we will ensure that \eqref{eq:GsQfHULIoFKD_random} holds.

  (\textbf{Step 1}: $f \in \cF_{\Delta, L}$) \\
  We have to show that $f$ satisfies Assumptions~\ref{ass:lipschitz_constant}, \ref{ass:lower_bound} and $f(0) - f^* \leq \Delta.$ This step is exactly the same as in the proof of Theorem~\ref{theorem:random_lower_bound_heter}. It is sufficient to take 
  \begin{align}
    \label{eq:luOePAy_random}
    T = \left\lfloor \frac{\Delta l_1}{L \lambda^2 S \Delta^0} \right\rfloor.
  \end{align}

  (\textbf{Step 2}: Construction of local functions $f_i$ and stochastic mappings) \\
  Unlike the proof of Theorem~\ref{theorem:random_lower_bound_heter} where the functions are predetermined, in this construction the functions $\{f_i\}$ depend on sequences of random variables and are constructed algorithmically in the following way.

  For all $w \in [T + 1],$ let us take any
  \begin{align}
    \label{eq:BZjAsfDsrOOEsFIQk}
    s_{w,0} \equiv 1 \leq s_{w,1} \leq \dots \leq s_{w,n - 1} \leq s_{w,n} \equiv S + 1,
  \end{align}
  and define $a_{w, i} \eqdef s_{w, i} - s_{w, (i - 1)}.$
  We also take any $p_{w,i} > 0$ for all $w \in [T], i \in [n],$ and any times 
  \begin{align}
    \label{eq:BZjAsfDsrOOEsFIQkk}
    \bar{t}_0 \equiv 0 \leq \bar{t}_1 \leq \dots \leq \bar{t}_{T} \leq \bar{t}_{T + 1} \equiv \infty
  \end{align}
  associated with $\{s_{w,\cdot}\}_{w \in [T+1]}$ and $\{p_{w,\cdot}\}_{w \in [T]}$. We construct the functions $\{f_i\}$ using the algorithm below.
  \begin{algorithm}[H]
    \caption{``Resisting allocation'' of the functions $\{h_j\}$}
    \label{alg:resisting}
    \begin{algorithmic}[1]
    \STATE $f_{i}(x) \leftarrow 0$ for all $i \in [n]$
    \FOR{$j = 1, \dots, S$}
    \STATE Current time window $w = 1$
    \FOR{$m = 1, \dots, T$}
    \STATE Find $i_{w,j}$ such that $s_{w,i_{w,j}-1} \leq j < s_{w,i_{w,j}}$
    \STATE Set $b_{j,m} \leftarrow (w, i_{w,j})$
    \STATE Update $f_{i_{w,j}}(x) \leftarrow f_{i_{w,j}}(x) + \frac{n L \lambda^2}{l_1} \left(\Psi\left(-\frac{x_{j,m-1}}{\lambda}\right)\Phi\left(-\frac{x_{j,m}}{\lambda}\right) - \Psi\left(\frac{x_{j,m-1}}{\lambda}\right)\Phi\left(\frac{x_{j,m}}{\lambda}\right)\right)$ \\
    ($x_{j,0} \equiv 0$ for all $j \in [S]$)
    \STATE Draw an infinite i.i.d. sequence $\{\xi_{j,m,s}\}_{s=1}^{\infty}$ from $\textnormal{Bernoulli}(p_{w,i_{w,j}})$ \alglinelabel{line:sample}
    \STATE Find the first moment when $\xi_{j,m,s} = 1,$ i.e., $\eta_{j,m} = \inf\{s \geq 1 \,:\, \xi_{j,m,s} = 1\}$ \\
    \IF{$V^{-1}_{i_{w,j}}\left(\eta_{j,m} + V_{i_{w,j}}(\bar{t}_{w - 1})\right) \geq \bar{t}_{w}$} \alglinelabel{line:iforig}
    \STATE $w \leftarrow w + 1$
    \ENDIF
    \ENDFOR
    \ENDFOR
    \end{algorithmic}
  \end{algorithm}

  As in the proof of Theorem~\ref{theorem:random_lower_bound_heter}, the stochastic mapping takes a point $x,$ a random variable $\xi,$ a time $t,$ and returns
  \begin{equation}
    \begin{aligned}
      [g_i(x; \bar{\xi}, t)]_{j,m} \eqdef 
      \begin{cases}
        [\nabla f_{i}(x)]_{j,m} \times \frac{\bar{\xi}_j}{p_{w(t),i}}, & m = \textnormal{prog}(x_j) + 1 \wedge b_{j,m} = (w(t), i), \\
        [\nabla f_{i}(x)]_{j,m}, & \textnormal{otherwise},
      \end{cases}
      \label{eq:JizJACFEaGQaVyyLArn_random}
    \end{aligned}
    \end{equation}
  where $[x]_{j,m}$ is the $m$\textsuperscript{th} coordinate of the $j$\textsuperscript{th} block of $x \in \R^{S \times T},$ 
  $\bar{\xi} \equiv (\bar{\xi}_1, \dots, \bar{\xi}_{S}),$
  $\bar{\xi}_j$ is the ``next'' random variable from $\{\xi_{j, (\textnormal{prog}(x_j) + 1), s}\}_{s=1}^{\infty}$ (see Alg.~\ref{alg:resisting}),
  and $w(t) \in [T]$ is the index such that $\bar{t}_{(w(t) - 1)} \leq t < \bar{t}_{w(t)}.$ 
  
  Let us clarify what we mean by the ``next'' random variable. In \begin{NoHyper}Line~\ref{line:sample}\end{NoHyper} of Alg.~\ref{alg:resisting}, we draw an infinite i.i.d. sequence $\{\xi_{j,m,s}\}_{s=1}^{\infty}$ from $\textnormal{Bernoulli}(p_{w,i_{w,j}}).$ For the first time when the mapping $g_i$ has to take the ``next'' random variable from $\{\xi_{j, (\textnormal{prog}(x_j) + 1), s}\}_{s=1}^{\infty},$ it takes $\xi_{j, (\textnormal{prog}(x_j) + 1), 1}.$ For the second time, it takes $\xi_{j, (\textnormal{prog}(x_j) + 1), 2},$ and so forth.
  
  For all $i \in [n],$ the mapping $g_i$ in the oracle $O_i$ zeroes out the coordinate $m$ of the gradient $\nabla f_{i}(x)$ corresponding to $m = \textnormal{prog}(x_j) + 1$ (idea is the same as in \citep{arjevani2022lower}). However, we only zero out this coordinate if the corresponding part of the function $h_j$ is stored on worker $i$ at the time $t$ (condition $b_{j,m} = (w(t), i)$).

  We now explain the idea. 
  For all $i \in [n],$ deterministically, we have 
  \begin{align*}
    f_{i}(x) = \frac{n L \lambda^2}{l_1} \sum_{j=s_{1,i-1} + 1}^{s_{1,i}} - \Psi(1) \Phi\left(\frac{x_{j,1}}{\lambda}\right) + \dots.
  \end{align*}
  where $x_{j,i}$ is the $i$\textsuperscript{th} coordinate of $x_j \in \R^{T}.$ Thus, the first part $-\Psi(1) \Phi(x_1)$ of the functions \eqref{eq:func_g_random} (see \eqref{eq:worst_case}) are allocated according to the values $\{s_{1,i}\}.$ 

  As always, we rely on the fact that the algorithm is zero-respecting, meaning that at the beginning, it starts with the point $x^0 = 0.$ While $x^k = 0,$ it does not matter where we allocate the other parts of the functions $h_j.$ The main idea is to decide the allocation based on the random variables $\{\xi_{j,m,s}\}_{s=1}^{\infty}$ from Alg.~\ref{alg:resisting}. By the construction of the functions $\{f_i\},$ the first part of $h_j$ belongs to worker $i_{1,j}$ such that $s_{1,i_{1,j}-1} \leq j < s_{1,i_{1,j}}.$ The oracle $O_{i_{1,j}}$ zeroes out the first coordinate of the $j$\textsuperscript{th} block of gradients with the probability $p_{1,i_{1,j}}$ (see \eqref{eq:JizJACFEaGQaVyyLArn_random}). 


  In Alg.~\ref{alg:resisting}, we consider two cases: \\
  If $V^{-1}_{i_{1,j}}\left(\eta_{j,1}\right) < \bar{t}_1,$ then in the next iteration of Alg.~\ref{alg:resisting}, we allocate the second part of the function $h_j$ to the same worker $i_{1,j},$ i.e., 
  \begin{align*}
    f_{i_{1,j}}(x) \leftarrow f_{i_{1,j}}(x) + \frac{n L \lambda^2}{l_1} \left(\Psi\left(-\frac{x_{j,1}}{\lambda}\right)\Phi\left(-\frac{x_{j,2}}{\lambda}\right) - \Psi\left(\frac{x_{j,1}}{\lambda}\right)\Phi\left(\frac{x_{j,2}}{\lambda}\right)\right).
  \end{align*}
  Otherwise, if $V^{-1}_{i_{1,j}}\left(\eta_{j,1}\right) \geq \bar{t}_1,$ then we increment the parameter $w$ in Alg.~\ref{alg:resisting} and allocate the second part to  worker $i_{2,j}:$
  \begin{align*}
    f_{i_{2,j}}(x) \leftarrow f_{i_{2,j}}(x) + \frac{n L \lambda^2}{l_1} \left(\Psi\left(-\frac{x_{j,1}}{\lambda}\right)\Phi\left(-\frac{x_{j,2}}{\lambda}\right) - \Psi\left(\frac{x_{j,1}}{\lambda}\right)\Phi\left(\frac{x_{j,2}}{\lambda}\right)\right),
  \end{align*}
  where $i_{2,j}$ such that $s_{2,i_{,j}-1} \leq j < s_{2,i_{2,j}}.$

  The mapping $g_i$ is unbiased and $\sigma^2$--variance bounded. 
  If $m > \textnormal{prog}(x_j) + 1,$ then $[\nabla f_{i}(x)]_{j,m} = 0$ deterministically due to Lemma~\ref{lemma:worst_function}.
  If $m = \textnormal{prog}(x_j) + 1$ and $b_{j,m} = (w(t), i),$ then we have to show that
  \begin{align*}
    \ExpSub{\bar{\xi}}{[\nabla f_{i}(x)]_{j,m} \times \frac{\bar{\xi}_j}{p_{w(t),i}}} = [\nabla f_{i}(x)]_{j,m}
  \end{align*}
  for all $j \in [S], m \in [T].$ 
  Notice that due the construction in Alg.~\ref{alg:resisting}, the gradient $\nabla f_{i}(x)$ can depend on $\{\xi_{j,m,s}\}_{s=1}^{\infty}$ since the random variables affect the allocation of parts starting from 
  \begin{equation}
  \begin{aligned}
    \label{eq:EHtZvQJEAH}
    &\Psi\left(-\frac{x_{j,m}}{\lambda}\right)\Phi\left(-\frac{x_{j,m+1}}{\lambda}\right) - \Psi\left(\frac{x_{j,m}}{\lambda}\right)\Phi\left(\frac{x_{j,m+1}}{\lambda}\right), \\
    &\Psi\left(-\frac{x_{j,m+1}}{\lambda}\right)\Phi\left(-\frac{x_{j,m+2}}{\lambda}\right) - \Psi\left(\frac{x_{j,m+1}}{\lambda}\right)\Phi\left(\frac{x_{j,m+2}}{\lambda}\right), \\
    &\dots \\
    &\Psi\left(-\frac{x_{j,T-1}}{\lambda}\right)\Phi\left(-\frac{x_{j,T}}{\lambda}\right) - \Psi\left(\frac{x_{j,T-1}}{\lambda}\right)\Phi\left(\frac{x_{j,T}}{\lambda}\right).
  \end{aligned}
  \end{equation}
  However, i) the $j$\textsuperscript{th} block does no depend on $\xi_{j',m',s'}$ with $j' \neq j, m' \in [T], s' \geq 1$ ii) all the previous parts in the $j$\textsuperscript{th} block depend only on $\{\xi_{j, m', s}\}_{m' < m, s \geq 1},$ and iii) all the parts from \eqref{eq:EHtZvQJEAH} are zero because $m = \textnormal{prog}(x_j) + 1.$ Therefore
  \begin{align*}
    \ExpSub{\bar{\xi}}{[\nabla f_{i}(x)]_{j,m} \times \frac{\bar{\xi}_j}{p_{w(t),i}}} 
    &\overset{\textnormal{i)}}{=} \ExpSub{\bar{\xi}_j}{[\nabla f_{i}(x)]_{j,m} \times \frac{\bar{\xi}_j}{p_{w(t),i}}} \\
    &\overset{\textnormal{ii), iii)}}{=} [\nabla f_{i}(x)]_{j,m} \ExpSub{\bar{\xi}_j}{\frac{\bar{\xi}_j}{p_{w(t),i}}} = [\nabla f_{i}(x)]_{j,m}.
  \end{align*}
  For $m = \textnormal{prog}(x_j) + 1$ and $b_{j,m} \neq (w(t), i),$ we have
  \begin{align*}
    \ExpSub{\bar{\xi}}{[g_i(x; \bar{\xi}, t)]_{j,m}} = \ExpSub{\bar{\xi}}{[\nabla f_{i}(x)]_{j,m}} \overset{\textnormal{i)}}{=} \ExpSub{\bar{\xi}_j}{[\nabla f_{i}(x)]_{j,m}} \overset{\textnormal{ii), iii)}}{=} [\nabla f_{i}(x)]_{j,m}.
  \end{align*} 
  For $m < \textnormal{prog}(x_j) + 1,$ we have
  \begin{align*}
    \ExpSub{\bar{\xi}}{[g_i(x; \bar{\xi}, t)]_{j,m}} = \ExpSub{\bar{\xi}}{[\nabla f_{i}(x)]_{j,m}} \overset{\textnormal{i)}}{=} \ExpSub{\bar{\xi}_j}{[\nabla f_{i}(x)]_{j,m}} \overset{\textnormal{ii)}}{=} [\nabla f_{i}(x)]_{j,m}.
  \end{align*} 
  Using the same reasoning, we have
  \begin{align*}
    &\ExpSub{\bar{\xi}}{\norm{g_i(x; \bar{\xi}, t) - \nabla f_i(x)}^2} \\
    &=\ExpSub{\bar{\xi}}{\sum_{\substack{j,m \,:\, b_{j,m} = (w(t), i), \\ m = \textnormal{prog}(x_j) + 1}} \left([\nabla f_i(x)]_{j,m}\right)^2 \left(\frac{\bar{\xi}_j}{p_{w(t),i}} - 1\right)^2} \\
    &=\sum_{\substack{j,m \,:\, b_{j,m} = (w(t), i), \\ m = \textnormal{prog}(x_j) + 1}} \left([\nabla f_i(x)]_{j,m}\right)^2 \ExpSub{\bar{\xi}}{\left(\frac{\bar{\xi}_j}{p_{w(t),i}} - 1\right)^2} \\
    &=\sum_{\substack{j,m \,:\, b_{j,m} = (w(t), i), \\ m = \textnormal{prog}(x_j) + 1}} \left([\nabla f_i(x)]_{j,m}\right)^2 \frac{\left(1 - p_{w(t),i}\right)}{p_{w(t),i}} \\
    &=\sum_{\substack{j,m \,:\, b_{j,m} = (w(t), i), \\ m = \textnormal{prog}(x_j) + 1}} \left(\left[\nabla F_T\left(\frac{x_j}{\lambda}\right)\right]_{m}\right)^2 \frac{n^2 L^2 \lambda^2 \left(1 - p_{w(t),i}\right)}{l_1^2 p_{w(t),i}}.
  \end{align*}
  Due to Lemma~\ref{lemma:worst_function}, we get $\norm{\nabla F_{T}\left(x\right)}_{\infty}^2 \leq \gamma_{\infty}^2$ for all $x \in \R^T$ and 
  \begin{align*}
    \ExpSub{\bar{\xi}}{\norm{g_i(x; \bar{\xi}, t) - \nabla f_i(x)}^2} 
    &\leq \sum_{\substack{j,m \,:\, b_{j,m} = (w(t), i), \\ m = \textnormal{prog}(x_j) + 1}} \frac{n^2 L^2 \lambda^2 \gamma_{\infty}^2 \left(1 - p_{w(t),i}\right)}{l_1^2 p_{w(t),i}} \\
    &= \frac{a_{w(t),i} n^2 L^2 \lambda^2 \gamma_{\infty}^2 \left(1 - p_{w(t),i}\right)}{l_1^2 p_{w(t),i}}
  \end{align*}
  because, due the condition $b_{j,m} = (w(t), i),$ we only consider the blocks from the set $\{s_{w(t),(i-1)}, \dots, s_{w(t),i} - 1\}$ (see Alg.~\ref{alg:resisting}), take one coordinate from each block, and recall that $a_{w(t),i} \eqdef s_{w(t),i} - s_{w(t),(i-1)}.$ Using the choice 
  \begin{align}
    \label{eq:uOIGCRiWeKzxBT_random}
    p_{w,i} \eqdef \min\left\{\frac{a_{w,i} n^2 L^2 \lambda^2 \gamma_{\infty}^2}{\sigma^2 l_1^2}, 1\right\}
  \end{align}
  for all $w \in [T], i \in [n],$ we get
  \begin{align*}
    \ExpSub{\bar{\xi}}{\norm{g_i(x; \bar{\xi}, t) - \nabla f_i(x)}^2} \leq \sigma^2.
  \end{align*}

  (\textbf{Step 3}: Analysis of Protocol)

  Mirroring the proof of Theorem~\ref{theorem:random_lower_bound_heter}, using 
  \begin{align}
    \label{eq:JpjVFT_random}
    \lambda = \sqrt{\frac{4 \varepsilon l_1^2}{L^2 S}},
  \end{align}
  one can show
  \begin{align}
    \label{eq:BXRkHgCjoYtBSYF_random}
    \inf_{k \in S_t} \norm{\nabla f(x^k)}^2 > \frac{4 \varepsilon}{S} \sum_{i=1}^S \inf_{k \in S_t} \mathbbm{1}[\textnormal{prog} (x^k_i) < T],
  \end{align}
  where $x^k$ are points defined in Protocol~\ref{alg:time_multiple_oracle_protocol}, and 
  \begin{equation}
  \begin{aligned}
    \Prob{\frac{4 \varepsilon}{S} \sum_{i=1}^S \inf_{k \in S_t} \mathbbm{1}[\textnormal{prog} (x_i^k) < T] \leq 2 \varepsilon} \leq \frac{2}{S} \sum_{i=1}^S \Exp{\sup_{k \in S_t} \mathbbm{1}[\textnormal{prog} (x_i^k) \geq T]}.
    \label{eq:tmszmJWkf_random}
  \end{aligned}
  \end{equation}

  (\textbf{Step 4}: Bound on the expectations)

  The time required to progress (get a non-zero value) to the $1$\textsuperscript{th} coordinate in the block $x_j$ is at least
  \begin{align*}
    \min\left\{\bar{t}_1, V^{-1}_{i_{1,j}}\left(\eta_{j,1}\right)\right\}
  \end{align*}
  seconds, where $V^{-1}_i$ is defined in \eqref{eq:inv} and $\eta_{j,1}$ is a geometric random variable with the probability $p_{1,i_{1,j}}.$  
  Because, due the condition $b_{j,1} \equiv (1,i_{1,j}) = (w(t), i)$, the mapping \eqref{eq:JizJACFEaGQaVyyLArn_random} zeroes out the first coordinate only if the algorithm returns $t^k < \bar{t}_1$ and $i_{1,j} = i.$ Therefore, either the algorithm returns $t^k \geq \bar{t}_1$ and \eqref{eq:JizJACFEaGQaVyyLArn_random} does not zero out the coordinate of gradients,
  or it keeps returning $t^k < \bar{t}_1,$ but then $A$ should calculate at least $\eta_{j,1}$ stochastic gradients in worker $i_{1,j}$ since the stochastic mapping zeros out the potentially non-zero coordinate in \eqref{eq:JizJACFEaGQaVyyLArn_random}.

  Recall that the ``resisting'' allocator (Alg.~\ref{alg:resisting}) tracks the random variable $\eta_{j,1}.$ \\
  \textbf{Opt. 1:} If $V^{-1}_{i_{1,j}}\left(\eta_{j,1}\right) < \bar{t}_1,$ then Alg.~\ref{alg:resisting} allocates the second part of the function $h_j$ to the same worker $i_{1,j},$ meaning 
  that the time required to progress (get a non-zero value) to the $2$\textsuperscript{th} coordinate in the block $x_j$ is at least
  \begin{align*}
    &\min\left\{\bar{t}_1, V^{-1}_{i_{1,j}}\left(\eta_{j,1} + \eta_{j,2}\right)\right\} \geq \min\left\{\bar{t}_1, V^{-1}_{i_{1,j}}\left(\eta_{j,2}\right)\right\}
  \end{align*}
  seconds, where $\eta_{j,2}$ is a geometric random variable with the probability $p_{1,i_{1,j}}.$ \\
  \textbf{Opt. 2:} If $V^{-1}_{i_{1,j}}\left(\eta_{j,1}\right) \geq \bar{t}_1,$ then we allocate the second part to worker $i_{2,j},$ where $i_{2,j}$ such that $s_{1,i_{2,j}-1} \leq j < s_{1,i_{2,j}},$ meaning that the time required to progress (get a non-zero value) to the $2$\textsuperscript{th} coordinate in the block $x_j$ is at least
  \begin{align*}
    \min\left\{\bar{t}_2, V^{-1}_{i_{2,j}}\left(\eta_{j,2} + V_{i_{2,j}}(\bar{t}_1)\right)\right\}
  \end{align*}
  seconds, where $\eta_{j,2}$ is a geometric random variable with the probability $p_{2,i_{2,j}},$ because either the algorithm returns $t^k \geq \bar{t}_2,$ or it keeps returning $t^k < \bar{t}_2,$ but then $A$ should first progress to the $1$\textsuperscript{th} coordinate, which takes at least $\bar{t}_1$ seconds, and then should calculate at least $\eta_{j,2}$ stochastic gradients. This will take at least 
  \begin{align*}
    &\min\left\{t \geq 0 \, : \, \flr{V_{i_{2,j}}(t) - V_{i_{2,j}}(\bar{t}_1)} \geq \eta_{j,2}\right\} \\
    &\overset{(\textnormal{cont. of }V_{i_{2,j}})}{=}\min\left\{t \geq 0 \, : \, V_{i_{2,j}}(t) - V_{i_{2,j}}(\bar{t}_1) = \eta_{j,2}\right\} \\
    &\overset{\eqref{eq:inv}}{=}V^{-1}_{i_{2,j}}\left(\eta_{j,2} + V_{i_{2,j}}(\bar{t}_1)\right)
  \end{align*}
  seconds. Notice that the parameter of the geometric random variable $\eta_{j,2}$ depends on the previous randomness.

  In the case \textbf{Opt. 1}, we can only conclude that the algorithm $A$ will require at least $\bar{t}_0 \equiv 0$ seconds to get a non-zero value in the $T$\textsuperscript{th} coordinate. In the case \textbf{Opt. 2}, we have better guarantees and can infer that the algorithm $A$ will require at least $\bar{t}_1 \geq \bar{t}_1$ seconds to get a non-zero value in the $T$\textsuperscript{th} coordinate because the inequality $V^{-1}_{i_{1,j}}\left(\eta_{j,1}\right) \geq \bar{t}_1$ holds. Hence, the condition in \begin{NoHyper}Line~\ref{line:iforig}\end{NoHyper} of Alg.~\ref{alg:resisting} determines a necessary time to get the $T$\textsuperscript{th} with a non-zero value.

  For all $j \in [m],$ we have the following Markov process that generalizes our previous discussion.
  \begin{algorithm}[H]
    \caption{Markov process in the $j$\textsuperscript{th} block}
    \label{alg:markov_process}
    \begin{algorithmic}[1]
    \STATE Current time window $w_m = 1$
    \FOR{$m = 1, \dots, T$}
    \STATE Find $i_{w_m,j}$ such that $s_{w_m,i_{w_m,j}-1} \leq j < s_{w_m,i_{w_m,j}}$
    \STATE Draw an infinite i.i.d. sequence $\{\xi_{j,m,s}\}_{s=1}^{\infty}$ from $\textnormal{Bernoulli}(p_{w_m,i_{w_m,j}})$
    \STATE Find the first moment when $\xi_{j,m,s} = 1,$ i.e., $\eta_{j,m} = \inf\{s \geq 1 \,:\, \xi_{j,m,s} = 1\}$ \\
    \IF{$V^{-1}_{i_{w_m,j}}\left(\eta_{j,m} + V_{i_{w_m,j}}(\bar{t}_{w_m - 1})\right) \geq \bar{t}_{w_m}$} \alglinelabel{line:if}
    \STATE $w_{m+1} \leftarrow w_{m} + 1$
    \ELSE
    \STATE $w_{m+1} \leftarrow w_{m}$
    \ENDIF
    \ENDFOR
    \STATE \textbf{Return: } $\bar{t}_{(w_T - 1)}$ is a necessary time to get the $T$\textsuperscript{th} non-zero coordinate in the $j$\textsuperscript{th} block
    \end{algorithmic}
  \end{algorithm}

  The provided random Markov process determines the time $\bar{t}_{(w_T - 1)}$ required to get the $T$\textsuperscript{th} non-zero coordinate in the $j$\textsuperscript{th} block.

  For all $j \in [m], m \in [T],$ $\eta_{j,m}$ has the geometric distribution with the parameter $p_{w_m,i_{w_m,j}},$ which depends only on the previous random variables $\eta_{j,1}, \dots, \eta_{j,m - 1}.$ Therefore, we can use Lemma~\ref{lemma:concentration_sum} and get
  \begin{align*}
    \Prob{\sum_{m=1}^{T} \mathbbm{1}\left[\eta_{j,m} > \frac{1}{4 p_{w_m,i_{w_m,j}}}\right] \leq \frac{T}{2} + \log \delta} \leq \delta
  \end{align*}
  for all $\delta \in (0, 1].$ The last inequality means that with a probability at least $\nicefrac{3}{4},$ there exist $1 \leq m_1 < m_2 < \dots < m_{\flr{\frac{T - 2}{2}}} \leq T$ such that 
  \begin{align}
  \label{eq:GcvkzDBZNbH}
  \eta_{j, m_k} > \frac{1}{4 p_{w_{m_k},i_{w_{m_k},j}}} \geq \frac{1}{8 p_{w_{m_k},i_{w_{m_k},j}}}
  \end{align}
  for all $k \in \left[\flr{\frac{T - 2}{2}}\right].$

  Due to \eqref{eq:uOIGCRiWeKzxBT_random} and \eqref{eq:JpjVFT_random}, we have
  \begin{align*}
    p_{w,i} \eqdef \min\left\{\frac{4 \varepsilon \gamma_{\infty}^2 a_{w,i} n^2}{\sigma^2 S}, 1\right\}.
  \end{align*}

  Recall that $\{\bar{t}_w\}_{w \in [\bar{T}]},$$\{s_{w,i}\}_{w \in [\bar{T} + 1], i \in [n]},$ and $S \in \N$ are free parameters with the only conditions \eqref{eq:BZjAsfDsrOOEsFIQk} and \eqref{eq:BZjAsfDsrOOEsFIQkk}. Therefore, we can use Lemma~\ref{lemma:params} with $K = 1$ and ensure that there exist parameters such that

  \begin{align*}
    V^{-1}_{i}\left(\frac{1}{8 p_{w,i}} + V_{i}(\bar{t}_{w-1})\right) \geq \bar{t}_w.
  \end{align*}
  for all $w \in [T], i \in [n].$ Putting the last inequality, \eqref{eq:GcvkzDBZNbH}, and \begin{NoHyper}Line~\ref{line:if}\end{NoHyper} of Alg.~\ref{alg:markov_process} together, we can conclude that, with a probability at least $\nicefrac{3}{4},$ the value of $w_T - 1$ is greater or equal to $\flr{\frac{T - 2}{2}}$ since the condition in \begin{NoHyper}Line~\ref{line:if}\end{NoHyper} of Alg.~\ref{alg:markov_process} will hold at least $\flr{\frac{T - 2}{2}}$ times. 

  (\textbf{Step 5}: Endgame) \\
  In the previous step, we prove that, with a probability at least $\nicefrac{3}{4},$ the algorithm requires at least $\bar{t}_{\flr{\frac{T - 2}{2}}}$ seconds to progress to the $T$\textsuperscript{th} coordinate of the $j$\textsuperscript{th} block, where $\bar{t}_{\bar{T}}$ is defined in the proof of Lemma~\ref{lemma:params}.

  Thus
  \begin{align*}
    \Exp{\sup_{k \in S_t} \mathbbm{1}[\textnormal{prog} (x_j^k) \geq T]} \leq \frac{1}{4}
  \end{align*}
  for all $j \in [n]$ and for all $t \leq \frac{1}{2} \bar{t}_{\flr{\frac{T - 2}{2}}}.$ We substitute these inequalities to \eqref{eq:tmszmJWkf_random} and \eqref{eq:BXRkHgCjoYtBSYF_random}, and get
  \begin{align*}
    \Prob{\frac{4 \varepsilon}{S} \sum_{i=1}^S \inf_{k \in S_t} \mathbbm{1}[\textnormal{prog} (x_i^k) < T] \leq 2 \varepsilon} \leq \frac{1}{2}
  \end{align*}
  and 
  \begin{align*}
    \Exp{\inf_{k \in S_t} \norm{\nabla f(x^k)}^2} > \Exp{\frac{4 \varepsilon}{S} \sum_{i=1}^S \inf_{k \in S_t} \mathbbm{1}[\textnormal{prog} (x^k_i) < T]} > 2 \varepsilon \Prob{\frac{4 \varepsilon}{S} \sum_{i=1}^S \inf_{k \in S_t} \mathbbm{1}[\textnormal{prog} (x_i^k) < T] > 2 \varepsilon} \geq \varepsilon
  \end{align*}
  for all 
  \begin{align*}
    t \leq \frac{1}{2} \widetilde{t}_{\flr{\frac{T - 2}{2}}} \overset{\eqref{eq:TToceJIF}}{\leq} \frac{1}{2} \bar{t}_{\flr{\frac{T - 2}{2}}}.
  \end{align*}
  Using the assumption $\varepsilon < c' L \Delta$ of the theorem, \eqref{eq:luOePAy_random}, and \eqref{eq:JpjVFT_random}, we get 
  \begin{align*}
    \flr{\frac{T - 2}{2}} \geq \flr{c_1 \times \frac{L \Delta}{\varepsilon}}
  \end{align*}
  for some universal constant $c_1.$

  Finally, since $\widetilde{t}_{w} \geq \underline{t}_{w}$ for all $w \in [T],$ where the later sequence is defined in \eqref{eq:iTLiJe_random},
  we can take
  \begin{align*}
    t = \frac{1}{2} \underline{t}_{\flr{c_1 \times \frac{L \Delta}{\varepsilon}}}.
  \end{align*}

\end{proof}

\section{Auxiliary Lemmas}

\begin{lemma}
  \label{lemma:min_inf}
  Let $V_i \,:\, \R^{\infty}_{+} \to \R^{\infty}_{+}$ is a continuous and non-decreasing function for all $i \in [n].$ For all $\eta, b_1,\dots,b_n \in \R^{\infty}_{+},$ the minimums of the sets 
  \begin{align*}
    \left\{t \geq 0 \, : \, \sum_{i=1}^n \flr{V_i(t) - b_i} \geq \eta\right\}
  \end{align*}
  and 
  \begin{align*}
    \left\{t \geq 0 \, : \, \left(\frac{1}{n} \sum_{i=1}^n \frac{1}{\flr{V_i(t) - b_i}}\right)^{-1} \geq \eta\right\}
  \end{align*}
  exist, considering the convention $\min \{\emptyset\} = \infty.$
\end{lemma}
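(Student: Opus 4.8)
The plan is to reduce both assertions to one structural fact about right-continuous functions. Call $f:\R_{+}\to[-\infty,\infty]$ \emph{right-continuous} if $f(t_0)=\lim_{t\downarrow t_0}f(t)$ at every $t_0\ge 0$; then for any $\eta$ the set $A:=\{t\ge 0:f(t)\ge\eta\}$ either is empty, in which case $\min A=\infty$ by the stated convention, or it attains its infimum. To see the latter, note that if $A\ne\emptyset$ then $c:=\inf A$ is a finite nonnegative real; if $c\notin A$ then every element of $A$ exceeds $c$ and, by definition of the infimum, there is a sequence $t_m\in A$ with $t_m>c$ and $t_m\to c$, so right-continuity gives $f(c)=\lim_m f(t_m)$, and since $f(t_m)\ge\eta$ for all $m$ and $[\eta,\infty]$ is closed we get $f(c)\ge\eta$ — contradicting $c\notin A$. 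Hence $c\in A$ and $\min A=c$. It is worth stressing that only right-continuity, not monotonicity, is used; this matters because the harmonic-mean-type expression in the second set need not be monotone once some floors are nonpositive.

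It then remains to prove that $g(t):=\sum_{i=1}^{n}\flr{V_i(t)-b_i}$ and $h(t):=\big(\tfrac1n\sum_{i=1}^{n}\flr{V_i(t)-b_i}^{-1}\big)^{-1}$ are right-continuous on $\R_{+}$, working first in the generic case where each $b_i$ is finite and each $V_i$ maps $\R_{+}$ into $\R_{+}$ (and with the extended-arithmetic conventions $1/0=+\infty$, $1/\infty=0$). First I would record that $\lfloor\cdot\rfloor$ is non-decreasing and right-continuous on $\R$: it is locally constant near any non-integer, and constant on $[m,m+1)$ for $m\in\mathbb{Z}$. Since each $V_i$ is continuous and non-decreasing, $t\mapsto V_i(t)-b_i$ is continuous and non-decreasing, so as $t\downarrow t_0$ its values tend to $V_i(t_0)-b_i$ \emph{from above}; composing with the right-continuous floor therefore shows that $t\mapsto\flr{V_i(t)-b_i}$ is right-continuous, and being integer-valued it is in fact constant on a right-neighbourhood of every point. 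Consequently $g$, a finite sum of such functions, is right-continuous.

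For $h$ I would exploit the same constancy-from-the-right: each reciprocal $\flr{V_i(t)-b_i}^{-1}$ takes values in $[-1,1]\cup\{+\infty\}$ (so no $\infty-\infty$ ever appears in the sum) and is constant on a right-neighbourhood of every point; a finite sum of such functions has the same property and still takes values in $(-\infty,\infty]$; and applying $s\mapsto s^{-1}$ once more again preserves it. Thus $h$ is constant on a right-neighbourhood of every point, hence right-continuous. Applying the structural fact to $g$ and to $h$ then finishes the proof.

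I expect the only genuine friction to be the extended-real bookkeeping: one must check that every operation appearing is well-defined and that no $\infty-\infty$ is produced — which is exactly what the observation $\flr{V_i(t)-b_i}^{-1}\in[-1,1]\cup\{+\infty\}$ delivers for $h$. The truly degenerate configurations (some $b_i=\infty$, or some $V_i$ attaining $+\infty$ at a finite time) either make the offending term constant, so right-continuity is unaffected, or force the set in question to be empty, so the convention $\min\{\emptyset\}=\infty$ settles them; I would dispatch these in a one-line remark rather than dwell on them.
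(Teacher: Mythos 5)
Your proposal is correct and is essentially the paper's argument repackaged: the paper also shows that each $\flr{V_i(\cdot)-b_i}$ is constant on a right-neighbourhood of the infimum (choosing $\delta=\min_i\delta_i$) and derives a contradiction with the definition of the infimum, which is exactly your right-continuity/local-right-constancy lemma specialized to the point $\inf A$. The only difference is presentational — you state the structural fact for general right-continuous functions and spell out the extended-arithmetic bookkeeping for the harmonic set, which the paper dismisses with ``the reasoning for the second set is the same.''
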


\begin{proof}
  We now focus on the first set. If the set is empty, then the minimum is $\infty$ by the convention.
  Otherwise, let us define 
  \begin{align*}
    \underline{t} \eqdef \inf\left\{t \geq 0 \, : \, \sum_{i=1}^n \flr{V_i(t) - b_i} \geq \eta\right\} < \infty.
  \end{align*}
  
  If the minimum does not exist, then
  \begin{align*}
    \sum_{i=1}^n \flr{V_i(\underline{t}) - b_i} < \eta.
  \end{align*}
  For all $i \in [n],$ the functions $V_i$ is continuous and non-decreasing meaning that there exists $\delta_i > 0$ such that
  \begin{align*}
    \flr{V_i(\underline{t}) - b_i} = \flr{V_i(\underline{t} + \delta) - b_i}.
  \end{align*}
  for all $0 \leq \delta \leq \delta_i.$ Let us take $\delta = \min\limits_{i \in [n]} \delta_i > 0,$ then
  \begin{align*}
    \sum_{i=1}^n \flr{V_i(\underline{t} + \delta) - b_i} = \sum_{i=1}^n \flr{V_i(\underline{t}) - b_i} < \eta.
  \end{align*}
  This contradicts the fact that $\underline{t}$ is the infimum. The reasoning for the second set is the same.
\end{proof}

\begin{lemma}
  \label{lemma:concentration_sum}
  Let $T \geq 1$ and $\{\eta_i\}_{i=1}^{T}$ are geometric random variables such that given $\eta_1, \dots, \eta_{i-1},$ $\eta_i~\sim~\textnormal{Geometric}(p_{i,\eta_1,\dots,\eta_{i-1}})$ and the probability $p_{i,\eta_1,\dots,\eta_{i-1}} \in (0, 1]$ depends only on $\eta_1,\dots,\eta_{i-1}$ for all $i \in [T].$
  Then \begin{align*}
    \Prob{\sum_{i=1}^{T} \mathbbm{1}\left[\eta_i > \frac{1}{4 p_{i,\eta_1,\dots,\eta_{i-1}}}\right] \leq \frac{T}{2} + \log \delta} \leq \delta
  \end{align*}
  for all $\delta \in (0, 1].$
\end{lemma}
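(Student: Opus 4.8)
The plan is to reduce the claim to a standard Azuma--Hoeffding / Chernoff-type martingale bound. First I would introduce the indicator random variables $X_i \eqdef \mathbbm{1}\left[\eta_i > \frac{1}{4 p_{i,\eta_1,\dots,\eta_{i-1}}}\right]$ and analyze them one at a time conditioned on the past. Since $\eta_i \mid \eta_1,\dots,\eta_{i-1}$ is geometric with parameter $p_i \eqdef p_{i,\eta_1,\dots,\eta_{i-1}}$, I would compute $\Prob{\eta_i > m} = (1-p_i)^{\flr{m}} \geq (1-p_i)^{m}$ for $m \geq 0$ (with the convention that $\eta_i$ takes values in $\N$), and then lower-bound
$$
\Prob{X_i = 1 \mid \eta_1,\dots,\eta_{i-1}} = \Prob{\eta_i > \tfrac{1}{4p_i}} \geq (1-p_i)^{1/(4p_i)} \geq e^{-1/4} \cdot (\text{a correction}),
$$
using $1-p \geq e^{-p/(1-p)}$ or the cleaner bound $(1-p)^{1/(4p)} \geq (1/4)^{1/4}\cdot\ldots$; in any case one gets a universal constant $q \geq 1/2$ lower bound on this conditional probability by a short calculation (for $p$ near $1$ one checks the small-value case separately, and for $p$ small $(1-p)^{1/(4p)} \to e^{-1/4} > 1/2$). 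So each $X_i$ stochastically dominates, conditionally on the past, a $\textnormal{Bernoulli}(q)$ variable with $q \geq \tfrac12$.

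Next I would pass to the exponential supermartingale. Set $S_T = \sum_{i=1}^T X_i$ and consider $M_T = \prod_{i=1}^T \exp(-s X_i) \cdot e^{s q}$ for a parameter $s > 0$ to be chosen; the conditional domination gives
$$
\Exp{e^{-s X_i} \mid \mathcal{F}_{i-1}} \leq q e^{-s} + (1-q) \leq \exp\!\big(q(e^{-s}-1)\big),
$$
so $\Exp{e^{-s S_T}} \leq \exp\!\big(T q (e^{-s}-1)\big)$. By Markov's inequality, for any threshold $a$,
$$
\Prob{S_T \leq a} = \Prob{e^{-s S_T} \geq e^{-s a}} \leq \exp\!\big(s a + T q(e^{-s}-1)\big).
$$
Taking $q = \tfrac12$, $a = \tfrac{T}{2} + \log\delta$, and choosing $s = \log 2$ (or optimizing), the exponent becomes $s\log\delta + sT/2 + \tfrac{T}{2}(e^{-s}-1)$, and with $s = \log 2$ one has $s/2 + \tfrac12(e^{-s}-1) = \tfrac{\log 2}{2} - \tfrac14 \leq 0$, so the $T$-dependent part is nonpositive and the bound collapses to $\exp(s\log\delta) = \delta^{s} \leq \delta$ since $s = \log 2 < 1$. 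That yields exactly $\Prob{S_T \leq \tfrac{T}{2} + \log\delta} \leq \delta$.

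The main obstacle I anticipate is the bookkeeping around the conditional stochastic domination: one must be careful that $p_i$ is $\mathcal{F}_{i-1}$-measurable (which is given), that the bound $(1-p_i)^{1/(4p_i)} \geq 1/2$ holds uniformly over $p_i \in (0,1]$ — this needs the degenerate endpoint $p_i = 1$ (where $\eta_i \equiv 1$ and $\tfrac{1}{4p_i} = \tfrac14 < 1$, so $X_i = 1$ deterministically) handled separately from the interior — and that the floor inside $\Prob{\eta_i > \tfrac{1}{4p_i}}$ does not hurt us (it only helps, since $\Prob{\eta_i > m} = \Prob{\eta_i > \flr{m}}$ for integer-valued $\eta_i$). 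Once the constant $q \geq 1/2$ is secured, the rest is the routine Chernoff computation above, and the precise constant $4$ in the statement is exactly what makes $e^{-1/4} > 1/2$ and the exponent cancellation work with $s = \log 2$.
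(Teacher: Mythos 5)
Your overall strategy (condition on the past, bound the conditional MGF of the indicators, then apply Chernoff/Markov) is the same as the paper's, but the constants you settle for break the argument at the final step. First, the arithmetic: with $s=\log 2$ you have $\tfrac{s}{2}+\tfrac12(e^{-s}-1)=\tfrac{\log 2}{2}-\tfrac14\approx 0.097>0$, not $\leq 0$, so the $T$-dependent part of the exponent does not cancel — it grows linearly in $T$. Second, the $\delta$ part also goes the wrong way: for $\delta\in(0,1]$ and $s=\log 2<1$ you get $\delta^{s}\geq\delta$, not $\leq\delta$; to conclude $e^{s\log\delta}\leq\delta$ you need $s\geq 1$. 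Neither issue is a tuning problem you can fix while keeping your domination constant $q\geq\tfrac12$: with $q=\tfrac12$ you would need some $s>0$ with $\tfrac{s}{2}\leq \tfrac12(1-e^{-s})$, i.e.\ $s\leq 1-e^{-s}$, which fails for every $s>0$. Indeed the reduction itself is too lossy — if the conditional success probability were exactly $\tfrac12$, then $\sum_i X_i$ would be Binomial$(T,\tfrac12)$ and $\Prob{\sum_i X_i\leq \tfrac{T}{2}+\log\delta}\to\tfrac12$ as $T\to\infty$ for fixed $\delta$, so the claimed bound would be false; hence no choice of $s$ can rescue the computation from $q\geq\tfrac12$ alone.

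The fix is to extract the stronger constant that the statement's ``$4$'' actually provides, which is what the paper does: conditionally on the past,
\begin{align*}
\ProbCond{\eta_i \leq \frac{1}{4p_i}}{\eta_1,\dots,\eta_{i-1}} = 1-(1-p_i)^{\flr{\frac{1}{4p_i}}} \leq p_i\flr{\frac{1}{4p_i}} \leq \frac14,
\end{align*}
by Bernoulli's inequality, so each indicator conditionally dominates a $\textnormal{Bernoulli}(3/4)$ variable (not merely $\textnormal{Bernoulli}(1/2)$; your route through $(1-p)^{1/(4p)}\geq e^{-1/4}>\tfrac12$ discards too much). With $q\geq\tfrac34$ and $s=1$ the per-step factor is $\ExpCond{e^{-X_i}}{\cdot}\leq\tfrac14(1-e^{-1})+e^{-1}\leq e^{-1/2}$, so $\Prob{\sum_i X_i\leq \bar T}\leq e^{\bar T-T/2}$, and $\bar T=\tfrac{T}{2}+\log\delta$ gives exactly $\delta$. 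With that replacement your martingale bookkeeping (measurability of $p_i$, the floor only helping, the endpoint $p_i=1$) is fine and the proof closes.
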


\begin{proof}
  Let us consider the simplified notation $p_i \equiv p_{i,\eta_1,\dots,\eta_{i-1}}$ and take any $\bar{T}, s > 0.$ 
  Using Chernoff's method, we get
  \begin{equation}
  \begin{aligned}
    \label{eq:CCIRBdDXoD}
    \Prob{\sum_{i=1}^{T} \mathbbm{1}\left[\eta_i > \frac{1}{4 p_i}\right] \leq \bar{T}} 
    &= \Prob{- s \sum_{i=1}^{T} \mathbbm{1}\left[\eta_i > \frac{1}{4 p_i}\right] \geq - s\bar{T}} \\
    &\leq e^{s\bar{T}}\Exp{e^{-s \sum_{i=1}^{T} \mathbbm{1}\left[\eta_i > \frac{1}{4 p_i}\right]}} \\
    &= e^{s\bar{T}}\Exp{\prod_{i=1}^{T} \ExpCond{e^{-s \mathbbm{1}\left[\eta_i > \frac{1}{4 p_i}\right]}}{\eta_1, \dots, \eta_{i - 1}}},
  \end{aligned}
  \end{equation}
  where we use the definition of conditional expectation.

  For all $i \in [T],$ we now consider the $i$\textsuperscript{th} expectation separately:
  \begin{align*}
    &\ExpCond{e^{-s \mathbbm{1}\left[\eta_i > \frac{1}{4 p_i}\right]}}{\eta_1, \dots, \eta_{i - 1}} \\
    &= \ProbCond{\eta_i \leq \frac{1}{4 p_i}}{\eta_1, \dots, \eta_{i - 1}} + e^{-s} \ProbCond{\eta_i > \frac{1}{4 p_i}}{\eta_1, \dots, \eta_{i - 1}} \\
    &= (1 - e^{-s})\ProbCond{\eta_i \leq \frac{1}{4 p_i}}{\eta_1, \dots, \eta_{i - 1}} + e^{-s}.
  \end{align*}

  Due the assumption of our theorem, given $\eta_1, \dots, \eta_{i - 1},$ $\eta_i$ is a geometric random variable with the parameter $p_i \equiv p_{i,\eta_1,\dots,\eta_{i-1}}.$ Therefore
  \begin{align*}
    &\ExpCond{e^{-s \mathbbm{1}\left[\eta_i > \frac{1}{4 p_i}\right]}}{\eta_1, \dots, \eta_{i - 1}} = (1 - e^{-s})\left(1 - (1 - p_i)^{\flr{\frac{1}{4 p_i}}}\right) + e^{-s},
  \end{align*}
  where $1 - (1 - p_i)^{\flr{\frac{1}{4 p_i}}} = 0$ if $p_i = 1$ and $\flr{\frac{1}{4 p_i}} = 0.$ Since $1 - (1 - p)^{\flr{S}} \leq p \flr{S}$ for all $p \in (0, 1]$ and $S \geq 0,$ we get 
  \begin{align*}
    \ExpCond{e^{-s \mathbbm{1}\left[\eta_i > \frac{1}{4 p_i}\right]}}{\eta_1, \dots, \eta_{i - 1}} 
    &\leq (1 - e^{-s}) p_i \flr{\frac{1}{4 p_i}} + e^{-s} \\
    &\leq (1 - e^{-s}) p_i \times \frac{1}{4 p_i} + e^{-s} = (1 - e^{-s}) \frac{1}{4} + e^{-s}.
  \end{align*}

  Let us take $s = 1,$ then
  \begin{align*}
    &\ExpCond{e^{-s \mathbbm{1}\left[\eta_i > \frac{1}{4 p_i}\right]}}{\eta_1, \dots, \eta_{i - 1}} \leq (1 - e^{-1}) \frac{1}{4} + e^{-1} \leq e^{-1/2}.
  \end{align*}

  We substitute the last inequality and the chosen value of $s$ to \eqref{eq:CCIRBdDXoD} and get
  \begin{align*}
    \Prob{\sum_{i=1}^{T} \mathbbm{1}\left[\eta_i > \frac{1}{4 p_i}\right] \leq \bar{T}} \leq e^{\bar{T} - \frac{T}{2}}.
  \end{align*}

  For all $\delta \in (0, 1],$ we can take 
  \begin{align*}
    \bar{T} = \frac{T}{2} + \log \delta
  \end{align*}

  to ensure that 

  \begin{align*}
    \Prob{\sum_{i=1}^{T} \mathbbm{1}\left[\eta_i > \frac{1}{4 p_i}\right] \leq \frac{T}{2} + \log \delta} \leq \delta.
  \end{align*}
\end{proof}

\begin{restatable}{lemma}{LEMMAMANYGEOM}
  \label{lemma:many_geom}
  Let $\eta_{1,1}, \dots, \eta_{1,K} \sim \textnormal{Geometric}(p_1),$ $\eta_{2,1}, \dots, \eta_{2,K} \sim \textnormal{Geometric}(p_2),$ \dots, $\eta_{\bar{T},1}, \dots, \eta_{\bar{T},K} \sim \textnormal{Geometric}(p_{\bar{T}})$ are mutually independent geometric random variables. Then
  \begin{align*}
    \Prob{\bigcup_{k \in [\bar{T}]} \left\{\sum_{j=1}^{K} \eta_{k,j} \leq \frac{K}{8 p_k}\right\}} \leq \bar{T} e^{- K / 2}.
  \end{align*}
\end{restatable}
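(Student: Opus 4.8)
The plan is to peel off the union over $k$ by a union bound, and then reduce the resulting one–sided tail of a sum of i.i.d.\ geometrics to the upper tail of a binomial, which is easily controlled by Chernoff.

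First I would apply the union bound, so that it suffices to establish, for each fixed $k\in[\bar{T}]$, the estimate
\[
\Prob{\sum_{j=1}^K\eta_{k,j}\le \frac{K}{8p_k}}\le e^{-K/2},
\]
after which summing over $k$ yields the claimed bound $\bar{T}\,e^{-K/2}$ (and note we only use independence within each group, not across groups). Fix such a $k$, abbreviate $p=p_k$, and let $\eta_1,\dots,\eta_K$ be i.i.d.\ $\mathrm{Geometric}(p)$ variables supported on $\{1,2,\dots\}$. Since $\sum_j\eta_j$ is integer valued, the event above is unchanged if the threshold $K/(8p)$ is replaced by $M\eqdef\flr{K/(8p)}$; if $M=0$ the event is empty (because $\sum_j\eta_j\ge K\ge 1$) and there is nothing to prove, so assume $M\ge 1$.

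Next I would invoke the standard negative–binomial/binomial duality: $\sum_{j=1}^K\eta_j$ is the number of independent $\mathrm{Bernoulli}(p)$ trials needed to accumulate $K$ successes, so the events $\{\sum_j\eta_j\le M\}$ and $\{\,\mathrm{Binomial}(M,p)\ge K\,\}$ coincide. Writing $\mu\eqdef Mp$, we have $\mu\le K/8$, hence $K\ge 8\mu\ge\mu$, and the multiplicative Chernoff bound for the upper tail of a binomial (for a binomial with mean $\mu$ and any $R\ge\mu$, $\Prob{X\ge R}\le(e\mu/R)^R$) gives $\Prob{\mathrm{Binomial}(M,p)\ge K}\le (e\mu/K)^K\le (e/8)^K$. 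Since $e^{3/2}\le 8$ we have $e/8\le e^{-1/2}$, so $(e/8)^K\le e^{-K/2}$, which is exactly the per-$k$ estimate; summing over $k\in[\bar{T}]$ finishes the argument.

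The only points needing care are bookkeeping rather than conceptual: fixing the convention that $\mathrm{Geometric}(p)$ is supported on $\{1,2,\dots\}$ (consistent with its use elsewhere in the paper, and what makes the binomial identity take this exact form), handling the degenerate case $M=0$ separately, and checking the numerical constant so that $e/8\le e^{-1/2}$. If one preferred to avoid the binomial reduction, the alternative is to bound the geometric moment generating function $\Exp{e^{-s\eta_1}}=pe^{-s}/\bigl(1-(1-p)e^{-s}\bigr)$ and optimize the Chernoff exponent over $s>0$ (e.g.\ $s=p$), but routing through the binomial upper tail is cleaner and keeps the constants transparent.
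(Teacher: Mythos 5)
Your proof is correct. The skeleton is the same as the paper's (union bound over $k$, then an exponential tail bound for each $\sum_{j=1}^K \eta_{k,j}$, and indeed only within-group independence is used), but the per-$k$ estimate is obtained by a different mechanism: you pass to $M=\flr{K/(8p_k)}$ via integrality, invoke the negative-binomial/binomial duality $\{\sum_j\eta_{k,j}\le M\}=\{\mathrm{Binomial}(M,p_k)\ge K\}$, and then apply the standard multiplicative Chernoff bound $\Prob{X\ge R}\le (e\mu/R)^R$ with $\mu=Mp_k\le K/8$, giving $(e/8)^K\le e^{-K/2}$ since $e^3\le 64$. The paper instead works directly with the Laplace transform of the geometric: it bounds $\Exp{e^{-s\eta_{k,1}}}=p_k/(e^s-(1-p_k))\le p_k/s$ using $e^s\ge 1+s$, chooses $s=4p_k$ and threshold $K/(8p_k)$, and reads off $e^{K/2-K}=e^{-K/2}$ — exactly the MGF alternative you mention at the end. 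The two routes are equally rigorous and give the same constant; yours leans on a ready-made binomial tail inequality and the duality identity (plus the small bookkeeping around $M=0$ and the support convention, which you handle correctly), while the paper's is self-contained and avoids importing the binomial Chernoff bound, which also keeps it stylistically parallel to the companion Lemma on conditionally distributed geometrics where such a reduction would be less convenient.
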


\begin{proof}
  Let us fix any $a_k \geq 0$ for all $k \in [\bar{T}].$ Using the union bound, we get
  \begin{align}
    \label{eq:PSlfIjXYcoVVpkDVxSt}
    \Prob{\bigcup_{k \in [\bar{T}]} \left\{\sum_{j=1}^{K} \eta_{k,j} \leq a_k\right\}} \leq \sum_{k=1}^{\bar{T}} \Prob{\sum_{j=1}^{K} \eta_{k,j} \leq a_k}.
  \end{align}
  For all $k \in [\bar{T}]$ and $s > 0,$ we obtain the following series of inequalities:
  \begin{align*}
    \Prob{\sum_{j=1}^{K} \eta_{k,j} \leq a_k} 
    &=\Prob{- s \sum_{j=1}^{K} \eta_{k,j} \geq -s a_k} =\Prob{e^{- s \sum_{j=1}^{K} \eta_{k,j}} \geq e^{-s a_k}} \\
    &\leq e^{s a_k} \Exp{e^{- s \sum_{j=1}^{K} \eta_{k,j}}},
  \end{align*}
  where we use Markov's inequality. Since the random variables are mutually independent, we have
  \begin{align*}
    \Prob{\sum_{j=1}^{K} \eta_{k,j} \leq a_k} 
    &\leq e^{s a_k} \left(\Exp{e^{- s \eta_{k,1}}}\right)^K = e^{s a_k} \left(\frac{p_k}{e^{s} - (1 - p_k)}\right)^K,
  \end{align*}
  where we use the moment-generating function of the geometric random variables. Since $p_k \geq 0$ and $e^s \geq 1 + s$ for all $s \in \R,$ we get
  \begin{align*}
    \Prob{\sum_{j=1}^{K} \eta_{k,j} \leq a_k} 
    \leq e^{s a_k} \left(\frac{p_k}{s}\right)^K.
  \end{align*}
  Let us take $s = 4 p_k$ to ensure that
  \begin{align*}
    \Prob{\sum_{j=1}^{K} \eta_{k,j} \leq a_k} 
    \leq e^{4 p_k a_k - K}.
  \end{align*}
  We can take $a_k = \frac{K}{8 p_k}$ to get
  \begin{align*}
    \Prob{\sum_{j=1}^{K} \eta_{k,j} \leq \frac{K}{8 p_k}} \leq e^{- K / 2}.
  \end{align*}
  It is left to substitute the last inequality to \eqref{eq:PSlfIjXYcoVVpkDVxSt}.
\end{proof}
    \begin{lemma}
      \label{lemma:params}
      For all $\bar{T}, K \in \N,$ there exist non-negative parameters $\{\bar{t}_w\}_{w \in [\bar{T}]},$$\{s_{w,i}\}_{w \in [\bar{T} + 1], i \in [n]},$ and $S \in \N$ such that
      \begin{enumerate}
      \item 
      \begin{equation}
      \begin{gathered}
        s_{w,0} \equiv 1 \leq s_{w,1} \leq \dots \leq s_{w,n - 1} \leq s_{w,n} \equiv S + 1 \quad \forall w \in [\bar{T} + 1], \\
        \bar{t}_0 \equiv 0 \leq \bar{t}_1 \leq \dots \leq \bar{t}_{\bar{T}} \leq \bar{t}_{\bar{T} + 1} \equiv \infty.
      \end{gathered}
      \end{equation}
      \item 
      \begin{align}
        \label{eq:CkaMQQaSfvcOKSFurnC}
        V^{-1}_{i}\left(\frac{K}{8 p_{w,i}} + V_{i}(\bar{t}_{w-1})\right) \geq \bar{t}_w.
      \end{align}
      for all $w \in [\bar{T}], i \in [n],$
      \item
      \begin{align}
        \label{eq:TToceJIF}
        \bar{t}_w \geq \widetilde{t}_{w} \eqdef \min\left\{t \geq 0 \,:\, \left(\frac{1}{n} \sum_{i=1}^n \frac{1}{\flr{\frac{16 (V_{i}\left(t\right) - V_{i} (\widetilde{t}_{w - 1}))}{K}}}\right)^{-1} \geq \max\left\{\frac{\sigma^2}{32 \gamma_{\infty}^2  \varepsilon n}, 1\right\} \right\} \qquad (\widetilde{t}_{0} \equiv 0)
      \end{align}
      for all $w \in [\bar{T}],$ 
      \end{enumerate}
      where
      \begin{align}
        \label{eq:uOIGCRiWeKzxBT_new}
        p_{w,i} \eqdef \min\left\{\frac{4 \varepsilon \gamma_{\infty}^2 a_{w,i} n^2}{\sigma^2 S}, 1\right\},
      \end{align}
      $a_{w, i} \eqdef s_{w, i} - s_{w, (i - 1)},$ and $\varepsilon, \gamma_{\infty}^2,\sigma^2$ are arbitrarily non-negative constants.
    \end{lemma}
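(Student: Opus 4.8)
The plan is to \emph{define} $\bar t_w := \widetilde t_w$ for every $w \in [\bar T]$, where $\widetilde t_w$ is the sequence from \eqref{eq:TToceJIF}. With this choice condition~3 holds trivially (with equality), and the ordering $\bar t_0 \equiv 0 \le \bar t_1 \le \dots \le \bar t_{\bar T} \le \bar t_{\bar T+1} \equiv \infty$ follows from monotonicity of the recursion (the constraint $V_i(t)-V_i(\widetilde t_{w-1}) \ge 0$ forces the min in \eqref{eq:TToceJIF} over $t \ge \widetilde t_{w-1}$, and the sequence becomes $+\infty$ once a min-set is empty, using $\min\{\emptyset\}=\infty$). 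Well-definedness of each $\widetilde t_w$ is exactly Lemma~\ref{lemma:min_inf} applied to the continuous non-decreasing maps $\tfrac{16}{K}V_i$. So the substance of the lemma is to exhibit a single $S \in \N$ together with breakpoints $1 \equiv s_{w,0} \le \dots \le s_{w,n}\equiv S+1$ — equivalently nonnegative integer segment lengths $a_{w,i}$ with $\sum_{i=1}^n a_{w,i}=S$ — for which condition~2, i.e.\ \eqref{eq:CkaMQQaSfvcOKSFurnC}, holds for every $w \in [\bar T]$ and $i \in [n]$.

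First I would reduce condition~2 to a size inequality. Since $V_i$ is continuous and non-decreasing with $V_i(0)=0$, one has $V_i^{-1}(S') \ge T$ whenever $V_i(t) < S'$ for all $t < T$; hence, writing $D_{w,i} := V_i(\widetilde t_w)-V_i(\widetilde t_{w-1})$, condition~2 is implied by $D_{w,i} < \tfrac{K}{8 p_{w,i}}$ for every $i$. Split the workers of window $w$ into \emph{slow} ($D_{w,i}<K/8$) and \emph{fast} ($D_{w,i}\ge K/8$). For a slow worker $\tfrac{K}{8p_{w,i}} \ge \tfrac{K}{8} > D_{w,i}$ automatically, for \emph{any} $a_{w,i}$. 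For a fast worker, by \eqref{eq:uOIGCRiWeKzxBT_new} it suffices that $a_{w,i} < \beta_{w,i} S$ with $\beta_{w,i} := \tfrac{K\sigma^2}{32\varepsilon\gamma_\infty^2 n^2 D_{w,i}}$ (note $\beta_{w,i}$ does not depend on $S$ or on the $a$'s). So condition~2 reduces to: choose nonnegative integers $a_{w,i}$ summing to $S$ with $a_{w,i} < \beta_{w,i} S$ on the fast workers.

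The crux is a budget estimate. At $t=\widetilde t_w$, the stopping rule in \eqref{eq:TToceJIF} gives, with $b_{w,i}:=\flr{16 D_{w,i}/K}$ and $M := \max\{\sigma^2/(32\gamma_\infty^2\varepsilon n),1\}$, the bound $\sum_i 1/b_{w,i} \le n/M$; in particular every $b_{w,i}\ge 1$. \textbf{Case (i): some worker is slow.} Route all of $S$ to a fixed slow worker $i_0$ (set $a_{w,i_0}=S$, $a_{w,i}=0$ otherwise, or $a_{w,i}=1$ and $a_{w,i_0}=S-n+1$ for $S\ge n$ if one prefers positive lengths): condition~2 then holds for every $i$. \textbf{Case (ii): every worker is fast.} This forces $M=\sigma^2/(32\gamma_\infty^2\varepsilon n)$: in the regime $M=1$, $\widetilde t_w$ is simply the first time every $b_{w,i}\ge 1$, so the last worker to reach that has $D_{w,i}=K/16<K/8$, i.e.\ is slow. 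Now use minimality of $\widetilde t_w$ together with the elementary fact that when a floor $b_{w,i}$ jumps from $b-1$ to $b$ with $b\ge 2$ the term $1/b_{w,i}$ drops by a factor at most $2$: just before $\widetilde t_w$ the condition fails, i.e.\ $\sum_i 1/b_{w,i} > n/M$ there, and — every worker being fast, so every jump at $\widetilde t_w$ is of that form — we get $\sum_i 1/b_{w,i}\ge n/(2M)$ \emph{at} $\widetilde t_w$. Combining with $D_{w,i}<(b_{w,i}+1)K/16 \le 3 b_{w,i}K/32$ yields $\sum_i \beta_{w,i} = \tfrac{K\sigma^2}{32\varepsilon\gamma_\infty^2 n^2}\sum_i \tfrac{1}{D_{w,i}} > \tfrac{\sigma^2}{6\varepsilon\gamma_\infty^2 n M} = \tfrac{16}{3} > 1$. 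Hence for $S$ a sufficiently large common multiple (uniformly over the finitely many $w\in[\bar T]$, which is legitimate because every $D_{w,i}\le V_i(\widetilde t_{\bar T})$ is finite when $\widetilde t_{\bar T}<\infty$) there exist positive integers $a_{w,i}<\beta_{w,i}S$ with $\sum_i a_{w,i}=S$, establishing condition~2.

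The main obstacle is precisely this case-(ii) budget estimate: a naive additive control of how far $\widetilde t_w$ overshoots the threshold is useless when $\sigma^2/\varepsilon$ is large (then $n/M$ can drop below $1/2$), so one must use the \emph{multiplicative} drop of a single floor term and, separately, the observation that ``all workers fast'' excludes the $M=1$ regime — this is where the constants $16$ in \eqref{eq:TToceJIF}, $32$ in \eqref{eq:uOIGCRiWeKzxBT_new}, and $8$ in \eqref{eq:CkaMQQaSfvcOKSFurnC} have to mesh. The remaining points are routine: rounding the $a_{w,i}$ to integers is absorbed by enlarging $S$; the strict-versus-nonstrict issue in the $V_i^{-1}$ bound is handled by demanding $D_{w,i}<K/(8p_{w,i})$ strictly on fast workers (there is slack, since $\sum_i\beta_{w,i}$ exceeds $1$ by a fixed factor); and the degenerate situation $\widetilde t_w=\infty$ (so $\bar t_w=\infty$) is dealt with by the convention $\min\{\emptyset\}=\infty$ after routing $S$ to a worker with bounded $V_i$, whose $D_{w,i}$ necessarily stays below $K/(8p_{w,i})$.
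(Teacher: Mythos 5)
Your proposal is correct, and it takes a genuinely different route from the paper's proof. The paper does \emph{not} set $\bar{t}_w=\widetilde{t}_w$: it constructs $\bar{t}_w=\max\{\bar{t}_w^1,\bar{t}_w^2\}$ with $\bar{t}_w^1=\max_{j}V_j^{-1}\left(\frac{K}{16}+V_j(\bar{t}_{w-1})\right)$ and $\bar{t}_w^2$ defined by the threshold equation \eqref{eq:ginRrbVSB}, chooses per-window proto-lengths $\bar{a}_{w,i}$ by a two-case rule (all mass on the arg-max worker when $\bar{t}_w^1>\bar{t}_w^2$, otherwise lengths proportional to $1/(V_i(\bar{t}_w)-V_i(\bar{t}_{w-1}))$ as in \eqref{eq:tzDKvEdGVqyhUtD}), rescales them by factors $k_w$ and remainders to a common $S$, verifies the sufficient inequality \eqref{eq:fcfmDYHrdNhCex}, and only then proves $\widetilde{t}_w\le\bar{t}_w$ by a separate induction. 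You instead take $\bar{t}_w:=\widetilde{t}_w$, which makes condition~3 trivial, and prove feasibility of the allocation directly: the reduction of \eqref{eq:CkaMQQaSfvcOKSFurnC} to $a_{w,i}<\beta_{w,i}S$ on fast workers, the exclusion of the all-fast case when $\max\{\sigma^2/(32\gamma_\infty^2\varepsilon n),1\}=1$ (via continuity of $V_i$ at the minimal time), and the budget estimate $\sum_i\beta_{w,i}>16/3$ obtained from minimality of $\widetilde{t}_w$ through the multiplicative floor-drop bound $1/(b-1)\le 2/b$ for $b\ge 2$ are all sound, and the constants $16$, $32$, $8$ do mesh exactly as you say; the integer rounding also goes through, since $\sum_i(\beta_{w,i}S-1)\ge\frac{16}{3}S-n\ge S$ already for $S\ge n$, so one common $S$ serves all windows. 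Your route is more direct and produces the smallest admissible $\bar{t}_w$, while the paper's explicit construction avoids any lower bound on $\sum_i\beta_{w,i}$ at the cost of the extra comparison step $\widetilde{t}_w\le\bar{t}_w$.

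The one loose point is your treatment of the degenerate case $\widetilde{t}_w=\infty$: ``route $S$ to a worker with bounded $V_i$'' is not sufficient as stated, because a bounded worker can still have limiting increment $\sup_t V_i(t)-V_i(\bar{t}_{w-1})\ge K/(8p_{w,i})$ even when all of $S$ is assigned to it (take every $V_i$ bounded with large limiting increments and $\sigma^2/(\varepsilon n)$ huge, so the harmonic threshold in \eqref{eq:TToceJIF} is never met although no worker is ``slow''). This is repaired by your own machinery: the failure of the stopping rule for every finite $t$ passes to the limit and gives $\sum_i 1/b_{w,i}^{\infty}\ge n/M$ for the limiting floors, so the slow/fast dichotomy and the budget estimate apply verbatim with $D_{w,i}$ replaced by the limiting increments (non-attainment being absorbed by the definition \eqref{eq:inv} of $V_i^{-1}$ as a minimum), and a feasible allocation exists in that case as well.
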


    \begin{proof}
    We have the free parameters $\{\bar{t}_w\}_{w \in [\bar{T}]},$$\{s_{w,i}\}_{w \in [\bar{T} + 1], i \in [n]},$ and $S \in \N$ with the only condition 
    \begin{equation}
    \label{eq:ojbCi}
    \begin{gathered}
      s_{w,0} \equiv 1 \leq s_{w,1} \leq \dots \leq s_{w,n - 1} \leq s_{w,n} \equiv S + 1 \quad \forall w \in [\bar{T} + 1], \\
      \bar{t}_0 \equiv 0 \leq \bar{t}_1 \leq \dots \leq \bar{t}_{\bar{T}} \leq \bar{t}_{\bar{T} + 1} \equiv \infty.
    \end{gathered}
    \end{equation}
    We now choose values of these parameters to ensure that \eqref{eq:CkaMQQaSfvcOKSFurnC} holds. Instead of $\{s_{w,i}\},$ we will work with $\{a_{w,i}\},$ then one can restore $\{s_{w,i}\}$ using the definition $a_{w, i} \eqdef s_{w, i} - s_{w, (i - 1)}.$ We have to ensure that 
    \begin{align}
      \label{eq:lOuQCvNGUaR}
      \sum_{i=1}^{n} a_{w,i} = S
    \end{align}
    holds for all $w \in [\bar{T} + 1]$ to get \eqref{eq:ojbCi}.
    It is sufficient to validate that
    \begin{align*}
      p_{w,i} < \frac{K}{8 (V_{i}(\bar{t}_w) - V_{i}(\bar{t}_{w-1}))} \qquad (\bar{t}_{0} \equiv 0)
    \end{align*}
    for all $w \in [\bar{T}]$ to guarantee that
    \begin{align*}
      V^{-1}_{i}\left(\frac{K}{8 p_{w,i}} + V_{i}(\bar{t}_{w-1})\right) \geq \bar{t}_w
    \end{align*}
    for all $w \in [\bar{T}].$
    Due to \eqref{eq:uOIGCRiWeKzxBT_new}, it is sufficient to find $\{a_{w,i}\},$ $\{\bar{t}_w\},$ and $S$ such that \eqref{eq:lOuQCvNGUaR} holds and
    \begin{align}
      \label{eq:fcfmDYHrdNhCex}
      &\min\left\{\frac{a_{w,i} n^2 4 \varepsilon \gamma_{\infty}^2}{S \sigma^2}, 1\right\} \nonumber \\
      &\overset{\eqref{eq:lOuQCvNGUaR}}{=} \min\left\{\frac{a_{w,i} n^2 4 \varepsilon \gamma_{\infty}^2}{\sum_{i=1}^n a_{w,i} \sigma^2}, 1\right\} < \frac{K}{8 (V_{i}(\bar{t}_w) - V_{i}(\bar{t}_{w-1}))}
    \end{align}
    for all $i \in [n]$ and for all $w \in [\bar{T}].$ \\
    Assume that $\bar{t}_{w-1}$ is defined ($\bar{t}_{0} \equiv 0$), and let us now consider $w \in [\bar{T}].$\\
    Let us define
    \begin{align}
      \label{eq:zZQIJohzAKjLIzyPmU}
      \bar{t}_{w}^1 \eqdef \max\limits_{j \in [n]} V^{-1}_j\left(\frac{K}{16} + V_j(\bar{t}_{w - 1})\right)
    \end{align}
    and
    \begin{align}
      \label{eq:ginRrbVSB}
      \bar{t}_w^2 \eqdef \min \left\{t \geq 0 \,:\, \sum_{i=1}^n \frac{K \sigma^2}{n^2 4 \varepsilon \gamma_{\infty}^2 \left(V_{i}\left(t\right) - V_{i} (\bar{t}_{w - 1})\right)} = 64\right\}.
    \end{align}
    \textbf{Opt. 1:} If $\bar{t}_{w}^1 > \bar{t}_w^2,$ then we take $\bar{t}_{w} = \bar{t}_{w}^1,$ $\bar{a}_{w,i} \eqdef 0$ for all $i \neq j^*,$ and $\bar{a}_{w,j^*} \eqdef 1,$ where 
    \begin{align}
      \label{eq:OKWUJhCJvikNrqHw}
      j^* = \arg\max\limits_{j \in [n]} V^{-1}_j\left(\frac{K}{16} + V_j(\bar{t}_{w - 1})\right). 
    \end{align} \\
    \textbf{Opt. 2:} If $\bar{t}_{w}^1 \leq \bar{t}_w^2,$ then we take $\bar{t}_{w} = \bar{t}_{w}^2,$ and \footnote{$V_{i}\left(\bar{t}_w\right) > V_{i} (\bar{t}_{w - 1})$ for all $i \in [n]$ since $\bar{t}_{w} \geq \bar{t}_w^1$ and $V_i(\bar{t}_{w}) \geq \frac{K}{16} + V_i(\bar{t}_{w - 1})$ for all $i \in [n],$ due to \eqref{eq:zZQIJohzAKjLIzyPmU} and the definition \eqref{eq:inv}.} 
    \begin{align}
      \label{eq:tzDKvEdGVqyhUtD}
      \bar{a}_{w,i} \eqdef \flr{\frac{\max\limits_{i \in [n]} \left\{V_{i}\left(\bar{t}_w\right) - V_{i} (\bar{t}_{w - 1})\right\}}{V_{i}\left(\bar{t}_w\right) - V_{i} (\bar{t}_{w - 1})}}
    \end{align}
    for all $i \in [n].$

    For $w = \bar{T} + 1,$ we take $\bar{a}_{w,i} \eqdef 0$ for all $i \neq 1,$ and $\bar{a}_{w,1} \eqdef 1.$

    We choose the following $S:$ 
    \begin{align*}
      S = \max_{w \in [\bar{T} + 1]} \left(\sum_{i=1}^n \bar{a}_{w,i}\right),
    \end{align*}
    and for all $w \in \Argmax_{w \in [\bar{T} + 1]} \left(\sum_{i=1}^n \bar{a}_{w,i}\right),$ we take $a_{w,i} \eqdef \bar{a}_{w,i}.$
    Let us take any $w$ such that $\sum_{i=1}^n \bar{a}_{w,i} < S,$ then, for all $w \in [\bar{T} + 1],$ there exists the smallest $k_w \geq 2$ that yields
    \begin{align*}
      \sum_{i=1}^n k_w \times \bar{a}_{w,i} \geq S.
    \end{align*}

    \textbf{Opt. 1:} If $\bar{t}_{w}^1 > \bar{t}_w^2,$ then we take $a_{w,j^*} \eqdef k_w \times \bar{a}_{w,j^*} = k_w$ and $a_{w,i} \eqdef k_w \times \bar{a}_{w,i} = 0$ for all $i \neq j^*$ ($j^*$ from \eqref{eq:OKWUJhCJvikNrqHw}) to ensure that $\sum_{i = 1}^n a_{w,i} = S.$ \\
    \textbf{Opt. 2:} If $\bar{t}_{w}^1 \leq \bar{t}_w^2,$ there exist $r_{w,i} \in \{0, \dots, \bar{a}_{w,i}\}$ that if we take 
    \begin{align*}
      a_{w,i} \eqdef (k_w - 1) \times \bar{a}_{w,i} + r_{w,i},
    \end{align*}
    then we guarantee the equality $\sum_{i = 1}^n a_{w,i} = S.$

    It is left to ensure that \eqref{eq:fcfmDYHrdNhCex} holds. \\

    \textbf{Opt. 1:} If $\bar{t}_{w}^1 > \bar{t}_w^2,$ then
    \eqref{eq:fcfmDYHrdNhCex} holds since 
    \begin{align*}
      \min\left\{\frac{a_{w,i} n^2 4 \varepsilon \gamma_{\infty}^2}{\sum_{i=1}^n a_{w,i} \sigma^2}, 1\right\} = 0 \quad \forall i \neq j^*
    \end{align*}
    and
    \begin{align*}
      \min\left\{\frac{a_{w,i} n^2 4 \varepsilon \gamma_{\infty}^2}{\sum_{i=1}^n a_{w,i} \sigma^2}, 1\right\} \leq 1 < \frac{K}{8 (V_{i}(\bar{t}_w) - V_{i}(\bar{t}_{w-1}))} \textnormal{ for } i = j^*,
    \end{align*}
    where the last inequality due to the definition \eqref{eq:inv} and the choice of $\bar{t}_{w}^1$: 
    \begin{align*}
      V_{j^*}(\bar{t}_{w}^1) = \frac{K}{16} + V_{j^*}(\bar{t}_{w - 1}) < \frac{K}{8} + V_{j^*}(\bar{t}_{w - 1}).
    \end{align*}

    \textbf{Opt. 2:} If $\bar{t}_{w}^1 \leq \bar{t}_w^2,$ then
    \eqref{eq:fcfmDYHrdNhCex} holds since 
    \begin{align*}
      \frac{a_{w,i}}{\sum_{i=1}^n a_{w,i}} 
      &\leq \frac{k_w \bar{a}_{w,i}}{(k_w - 1)\sum_{i=1}^n \bar{a}_{w,i}} \leq 2 \frac{\bar{a}_{w,i}}{\sum_{i=1}^n \bar{a}_{w,i}}
    \end{align*}
    because $k_w \geq 2.$ Using \eqref{eq:tzDKvEdGVqyhUtD} and $x \geq \flr{x} \geq \frac{x}{2}$ for all $x \geq 1$, we get
    \begin{align*}
      \frac{a_{w,i}}{\sum_{i=1}^n a_{w,i}} \leq 4 \frac{\frac{1}{V_{i}\left(\bar{t}_w\right) - V_{i} (\bar{t}_{w - 1})}}{\sum_{i=1}^n \frac{1}{V_{i}\left(\bar{t}_w\right) - V_{i} (\bar{t}_{w - 1})}} \overset{\eqref{eq:ginRrbVSB}}{\leq} \frac{K \sigma^2}{64 n^2 \varepsilon \gamma_{\infty}^2\left(V_{i}\left(\bar{t}_w\right) - V_{i} (\bar{t}_{w - 1})\right)}.
    \end{align*}
    The last inequality ensures that \eqref{eq:fcfmDYHrdNhCex} holds. In total, we have $\bar{t}_{w} = \max\{\bar{t}_{w}^1, \bar{t}_w^2\}.$ 
    
    It is left to prove \eqref{eq:TToceJIF}. Let us define 
    \begin{align}
      \label{eq:LcRPFTPohH}
      \widetilde{t}_{w} 
      &\eqdef \min\left\{t \geq 0 \,:\, \left(\frac{1}{n} \sum_{i=1}^n \frac{1}{\flr{\frac{16 (V_{i}\left(t\right) - V_{i} (\widetilde{t}_{w - 1}))}{K}}}\right)^{-1} \geq \max\left\{\frac{\sigma^2}{32 \gamma_{\infty}^2  \varepsilon n}, 1\right\} \right\} \qquad (\widetilde{t}_{0} \equiv 0).
    \end{align}
    We know that $\widetilde{t}_{0} \equiv 0 \leq \bar{t}_{0} \equiv 0.$ Using a proof by induction and assuming $\widetilde{t}_{w - 1} \leq \bar{t}_{w - 1},$ we have
    \begin{align*}
      \widetilde{t}_{w} 
      &= \min\left\{t \geq \max\limits_{j \in [n]} V^{-1}_j\left(\frac{K}{16} + V_{j} (\widetilde{t}_{w - 1})\right) \,:\,\left(\frac{1}{n} \sum_{i=1}^n \frac{1}{\flr{\frac{16 (V_{i}\left(t\right) - V_{i} (\widetilde{t}_{w - 1}))}{K}}}\right)^{-1} \geq \max\left\{\frac{\sigma^2}{32 \gamma_{\infty}^2  \varepsilon n}, 1\right\} \right\} \\
      &= \min\left\{t \geq \max\limits_{j \in [n]} V^{-1}_j\left(\frac{K}{16} + V_{j} (\widetilde{t}_{w - 1})\right) \,:\,\frac{1}{n} \sum_{i=1}^n \frac{1}{\flr{\frac{16 (V_{i}\left(t\right) - V_{i} (\widetilde{t}_{w - 1}))}{K}}} \leq \min\left\{\frac{32 \gamma_{\infty}^2  \varepsilon n}{\sigma^2}, 1\right\} \right\}
    \end{align*}
    because if $\left(\frac{1}{n} \sum_{i=1}^n \frac{1}{\flr{\frac{16 (V_{i}\left(t\right) - V_{i} (\widetilde{t}_{w - 1}))}{K}}}\right)^{-1} \geq \max\left\{\frac{\sigma^2}{32 \gamma_{\infty}^2  \varepsilon n}, 1\right\},$ then, necessarily, $16 (V_{i}\left(t\right) - V_{i} (\widetilde{t}_{w - 1})) \geq K$ for all $i \in [n].$
    Then 
    \begin{align*}
      \widetilde{t}_{w} 
      &= \min\left\{t \geq \max\limits_{j \in [n]} V^{-1}_j\left(\frac{K}{16} + V_{j} (\widetilde{t}_{w - 1})\right) \,:\,\frac{1}{n} \sum_{i=1}^n \frac{1}{\flr{\frac{16 (V_{i}\left(t\right) - V_{i} (\widetilde{t}_{w - 1}))}{K}}} \leq \frac{32 \gamma_{\infty}^2  \varepsilon n}{\sigma^2} \right\}
    \end{align*}
    because $\frac{1}{n} \sum_{i=1}^n \frac{1}{\flr{\frac{16 (V_{i}\left(t\right) - V_{i} (\widetilde{t}_{w - 1}))}{K}}} \leq 1$ for all $t \geq \max\limits_{j \in [n]} V^{-1}_j\left(\frac{K}{16} + V_{j} (\widetilde{t}_{w - 1})\right).$
    Further
    \begin{align*}
      \widetilde{t}_{w} 
      &\leq \min\left\{t \geq \max\limits_{j \in [n]} V^{-1}_j\left(\frac{K}{16} + V_{j} (\bar{t}_{w - 1})\right) \,:\,\frac{1}{n} \sum_{i=1}^n \frac{1}{\flr{\frac{16 (V_{i}\left(t\right) - V_{i} (\bar{t}_{w - 1}))}{K}}} \leq \frac{32 \gamma_{\infty}^2  \varepsilon n}{\sigma^2} \right\} \\
      &\leq \min\left\{t \geq \max\limits_{j \in [n]} V^{-1}_j\left(\frac{K}{16} + V_{j} (\bar{t}_{w - 1})\right) \,:\,\sum_{i=1}^n \frac{\sigma^2}{n^2 4 \varepsilon \gamma_{\infty}^2 \frac{(V_{i}\left(t\right) - V_{i} (\bar{t}_{w - 1}))}{K}} \leq 64\right\}
    \end{align*}
    because $\widetilde{t}_{w - 1} \leq \bar{t}_{w - 1},$ $\{V_{i}\}$ are non-decreasing, and $\flr{x} \geq \frac{x}{2}$ for all $x \geq 1.$ Then
    \begin{align*}
      \widetilde{t}_{w} 
      &\leq \min\left\{t \geq \max\limits_{j \in [n]} V^{-1}_j\left(\frac{K}{16} + V_{j} (\bar{t}_{w - 1})\right) \,:\,\sum_{i=1}^n \frac{K \sigma^2}{n^2 4 \varepsilon \gamma_{\infty}^2 (V_{i}\left(t\right) - V_{i} (\bar{t}_{w - 1}))} \leq 64\right\} \leq \max\{\bar{t}_{w}^1, \bar{t}_w^2\} = \bar{t}_{w}
    \end{align*}
    due to the definitions \eqref{eq:zZQIJohzAKjLIzyPmU}, \eqref{eq:ginRrbVSB} of $\bar{t}_{w}^1,$$\bar{t}_w^2,$ and $\bar{t}_w.$
    \end{proof}

    \begin{lemma}\citep{tyurin2024freya}[Section K]\label{lemma:techn1}
      Consider a sequence $\infty > v_1\geq \ldots \geq v_n$ and fix some $S>0$. For all $j \in [n]$, define $$g(j) \eqdef \left(\sum_{i=1}^j v_i\right)^{-1} \left(S + j\right).$$ 
      \begin{enumerate}
          \item Let $j^*_{\max}$ be the largest index such that $\min\limits_{j \in [n]} g(j) = g(j^*_{\max}).$ For $j^*_{\max} < n,$ we have
          \begin{align*}
              \min_{j \in [n]} g(j) < \frac{1}{v_{(j^*_{\max} + 1)}}.
          \end{align*}
          \item Let $j^*$ be any index such that $\min\limits_{j \in [n]} g(j) = g(j^*).$ For $j^* < n,$ we have
          \begin{align*}
              \min_{j \in [n]} g(j) \leq \frac{1}{v_{(j^* + 1)}}.
          \end{align*}
          \item Let $j^*_{\min}$ be the smallest index such that $\min\limits_{j \in [n]} g(j) = g(j^*_{\min})$. Then
          \begin{align*}
              \frac{1}{v_{j^*_{\min}}} < \min_{j \in [n]} g(j).
          \end{align*}
          \item Let $j^*$ be any index such that $\min\limits_{j \in [n]} g(j) = g(j^*).$ Then
          \begin{align*}
              \frac{1}{v_{j^*}} \leq \min_{j \in [n]} g(j).
          \end{align*}
      \end{enumerate}
  \end{lemma}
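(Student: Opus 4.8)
The plan is to reduce all four statements to a single algebraic identity for consecutive differences of $g$, together with one elementary ``mediant'' observation. Write $V_j \eqdef \sum_{i=1}^j v_i$, so that $g(j) = (S+j)/V_j$ and $V_{j+1} = V_j + v_{j+1}$; throughout assume $v_1 > 0$ (hence every $V_j > 0$), a zero $v_{j+1}$ being harmless under the convention $1/0 = +\infty$. For $1 \le j < n$ I would first compute
\[
  g(j+1) - g(j) = \frac{(S+j+1)V_j - (S+j)V_{j+1}}{V_j V_{j+1}} = \frac{V_j - (S+j)\,v_{j+1}}{V_j V_{j+1}},
\]
and, by an identical manipulation,
\[
  \frac{1}{v_{j+1}} - g(j+1) = \frac{V_j - (S+j)\,v_{j+1}}{v_{j+1}V_{j+1}}, \qquad \frac{1}{v_{j+1}} - g(j) = \frac{V_j - (S+j)\,v_{j+1}}{v_{j+1}V_j}.
\]
The key point $(\star)$ is that all three of $g(j+1)-g(j)$, $\tfrac1{v_{j+1}}-g(j+1)$ and $\tfrac1{v_{j+1}}-g(j)$ carry the \emph{same sign}, namely that of the common numerator $V_j - (S+j)v_{j+1}$, since the three denominators are positive. (This is just the statement that $g(j+1)$, being the mediant of the fractions $(S+j)/V_j$ and $1/v_{j+1}$, lies between them.)

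Given $(\star)$, each claim follows from one consecutive pair around an extremal minimizer, plus the trivial base case $j=1$: there $g(1) = (S+1)/v_1$, and $S>0$ yields $1/v_1 < g(1)$ (needed for (3)) and $1/v_1 \le g(1)$ (needed for (4)). For (1): since $j^*_{\max}$ is the \emph{largest} minimizer and $j^*_{\max} < n$, the index $j^*_{\max}+1$ is strictly worse, so $g(j^*_{\max}+1) > g(j^*_{\max}) = \min_{j\in[n]} g(j)$; by $(\star)$ at $j = j^*_{\max}$ the numerator is strictly positive, hence $\min_{j\in[n]} g(j) = g(j^*_{\max}) < 1/v_{j^*_{\max}+1}$. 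For (2): minimality gives $g(j^*+1) \ge g(j^*)$, so the numerator at $j=j^*$ is $\ge 0$ and $(\star)$ gives $\min_{j\in[n]} g(j) = g(j^*) \le 1/v_{j^*+1}$. For (3) with $j^*_{\min} \ge 2$: the index $j^*_{\min}-1$ is strictly worse, so $g(j^*_{\min}-1) > g(j^*_{\min})$; $(\star)$ at $j = j^*_{\min}-1$ makes the numerator strictly negative, hence $1/v_{j^*_{\min}} < g(j^*_{\min}) = \min_{j\in[n]} g(j)$. For (4) with $j^* \ge 2$: minimality gives $g(j^*-1) \ge g(j^*)$, so the numerator at $j = j^*-1$ is $\le 0$ and $1/v_{j^*} \le g(j^*) = \min_{j\in[n]} g(j)$.

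I do not expect a real obstacle: once the common-numerator identity is written down the argument is a few lines, and it does not even use the ordering $v_1 \ge \dots \ge v_n$ (that ordering is what makes $g$ ``unimodal'' and matters for how the lemma is later applied, not for these four inequalities). The only things to watch are the bookkeeping around ties --- using that $j^*_{\max}+1$ and $j^*_{\min}-1$ are \emph{strictly} worse than the minimum to get the strict inequalities in (1) and (3), versus the merely weak inequalities claimed in (2) and (4) --- and the handling of a degenerate $v_j$ through the $1/0 = +\infty$ convention. Since the result is quoted from \citep{tyurin2024freya}, one may simply cite that source and present the above as the self-contained verification.
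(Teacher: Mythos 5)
The paper does not prove this lemma at all: it is imported verbatim from \citet{tyurin2024freya} (Section K), so there is no in-paper argument to compare yours against. Your verification is correct as a self-contained proof. The identity that $g(j+1)-g(j)$, $\tfrac{1}{v_{j+1}}-g(j+1)$ and $\tfrac{1}{v_{j+1}}-g(j)$ all share the sign of the common numerator $V_j-(S+j)v_{j+1}$ (equivalently, the mediant observation) is exactly the right engine: strict worseness of $j^*_{\max}+1$ and of $j^*_{\min}-1$ gives the strict inequalities in parts (1) and (3), weak minimality gives (2) and (4), and the base case $j=1$ with $S>0$ covers (3)--(4) when the minimizer is $1$. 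Two small points worth making explicit if you write this up: the lemma implicitly needs $v_1>0$ (otherwise $g\equiv\infty$ and the strict claims degenerate), which you assume; and when $v_{j^*}=0$ could threaten the division in (3)--(4), the same sign identity shows such an index cannot be a minimizer, so the convention $1/0=\infty$ is only ever invoked on the harmless side, as in your handling of (1)--(2). Your observation that the monotone ordering of $\{v_i\}$ is not needed for these four inequalities is also accurate; it matters only for how the lemma is applied later in the examples.
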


  \section{Convex Setting}
  \label{sec:convex}

  In the convex setting, the time complexities do not change significantly in a conceptual sense compared to the nonconvex case. Therefore, we will provide a somewhat less detailed description in this section. The obtained time complexities also hinge on the sequences \eqref{eq:homog_compl} and \eqref{eq:upper_bound}. The only difference is the number of iterations that the methods do in each particular setup. 
  
  Using the same reasoning as in other sections and \citep{tyurin2023optimal}[Section B], we conjecture that the following results are optimal up to constant factors. It is sufficient to use an appropriate ``difficult'' function \citep{guzman2015lower,nesterov2018lectures,woodworth2018graph} designed for the convex domain instead of \eqref{eq:worst_case}.
  
  We use the following assumptions:
  \begin{assumption}
    \label{ass:convex}
    The function $f$ is convex and attains a minimum at some point $x^* \in \R^d.$
  \end{assumption}
  \begin{assumption}
    \label{ass:lipschitz_constant_function}
    The function $f$ is $M$--Lipschitz, i.e., $$\abs{f(x) - f(y)} \leq M \norm{x - y}, \quad \forall x, y \in \R^d$$ for some $M \in (0, \infty].$
  \end{assumption}
  
  \begin{assumption}[Homogeneous setup]
    \label{ass:stochastic_variance_bounded_convex}
    For all $i \in [n],$ worker $i$ can only calculate $\nabla f(x;\xi).$ For all $x \in \R^d,$ stochastic (sub)gradients $\nabla f(x;\xi)$ are unbiased and are $\sigma^2$-variance-bounded, i.e.,
    $\ExpSub{\xi}{\nabla f(x;\xi)} \in \partial f(x)$ and 
    $\ExpSub{\xi}{\norm{\nabla f(x;\xi) - \ExpSub{\xi}{\nabla f(x;\xi)}}^2} \leq \sigma^2,$ where $\sigma^2 \geq 0.$
  \end{assumption} 

  \begin{assumption}[Heterogeneous setup]
    \label{ass:stochastic_variance_bounded_convex_heter}
    For all $i \in [n],$ worker $i$ can only calculate $\nabla f_i(x;\xi_i).$ For all $x \in \R^d, i \in [n]$ stochastic (sub)gradients $\nabla f_i(x;\xi_i)$ are unbiased and are $\sigma^2$-variance-bounded, i.e.,
    $\ExpSub{\xi_i}{\nabla f_i(x;\xi_i)} \in \partial f_i(x)$ and 
    $\ExpSub{\xi_i}{\norm{\nabla f_i(x;\xi_i) - \ExpSub{\xi_i}{\nabla f_i(x;\xi_i)}}^2} \leq \sigma^2,$ where $\sigma^2 \geq 0.$
  \end{assumption} 

  We consider \emph{four} cases.

  \subsection{Homogeneous setup and nonsmooth case}

\begin{restatable}{theorem}{THEOREMSGDHOMOGCONVEX}[\citep{tyurin2023optimal}]
  \label{thm:sgd_homog_convex}
Let Assumptions~\ref{ass:convex}, \ref{ass:lipschitz_constant_function} and \ref{ass:stochastic_variance_bounded_convex} hold. Choose any $\varepsilon > 0.$ Let us take the batch size $S = \max\left\{\left\lceil\nicefrac{\sigma^2}{M^2}\right\rceil, 1\right\},$
  stepsize $\gamma = \frac{\varepsilon}{M^2 + \sigma^2 / S} \in \left[\frac{\varepsilon}{2 M^2}, \frac{\varepsilon}{M^2}\right]$ in Method~\ref{alg:alg_server}.
  Then after $K \geq \nicefrac{2 M^2 R^2}{\varepsilon^2}$
  iterations the method guarantees $\Exp{f(\widehat{x}^K)} - f(x^*) \leq \varepsilon,$ where $\widehat{x}^K = \frac{1}{K} \sum_{k=0}^{K-1} x^k$ and $R = \norm{x^* - x^{0}}.$
\end{restatable}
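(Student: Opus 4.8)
The plan is to exploit the observation, already noted in the main text, that Method~\ref{alg:alg_server} is mathematically identical to mini-batch stochastic (sub)gradient descent: with batch size $S$, the $k$\textsuperscript{th} iteration performs $x^{k+1} = x^k - \gamma g^k$ with $g^k \eqdef \frac{1}{S}\sum_{i=1}^S \nabla f(x^k;\xi^k_i)$ and $\{\xi^k_i\}$ i.i.d. Writing $\widehat{g}^k \eqdef \ExpCond{g^k}{x^k} \in \partial f(x^k)$ (Assumption~\ref{ass:stochastic_variance_bounded_convex}), the only stochastic quantities one needs to control are $\norm{\widehat{g}^k} \le M$ (Assumption~\ref{ass:lipschitz_constant_function}) and the batch variance $\ExpCond{\norm{g^k - \widehat{g}^k}^2}{x^k} \le \sigma^2/S$.

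First I would run the classical one-step argument for the squared distance to $x^*$: expand $\norm{x^{k+1}-x^*}^2 = \norm{x^k-x^*}^2 - 2\gamma\langle g^k, x^k - x^*\rangle + \gamma^2\norm{g^k}^2$, take the conditional expectation given $x^k$, use convexity (Assumption~\ref{ass:convex}) in the form $\langle \widehat{g}^k, x^k-x^*\rangle \ge f(x^k) - f(x^*)$, and bound $\ExpCond{\norm{g^k}^2}{x^k} = \norm{\widehat{g}^k}^2 + \ExpCond{\norm{g^k-\widehat{g}^k}^2}{x^k} \le M^2 + \sigma^2/S$. This yields $\ExpCond{\norm{x^{k+1}-x^*}^2}{x^k} \le \norm{x^k-x^*}^2 - 2\gamma(f(x^k)-f(x^*)) + \gamma^2(M^2+\sigma^2/S)$.

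Next I would take total expectations, telescope over $k=0,\dots,K-1$, discard the nonnegative term $\Exp{\norm{x^K-x^*}^2}$, and use $\norm{x^0-x^*} = R$ to obtain $\frac{1}{K}\sum_{k=0}^{K-1}(\Exp{f(x^k)}-f(x^*)) \le \frac{R^2}{2\gamma K} + \frac{\gamma(M^2+\sigma^2/S)}{2}$; Jensen's inequality (convexity of $f$) then replaces the left-hand side by $\Exp{f(\widehat{x}^K)}-f(x^*)$ with $\widehat{x}^K = \frac{1}{K}\sum_{k=0}^{K-1}x^k$. Finally I would substitute the parameter choices: $S = \max\{\ceil{\sigma^2/M^2},1\}$ forces $\sigma^2/S \le M^2$, hence $M^2 \le M^2+\sigma^2/S \le 2M^2$, which in particular verifies $\gamma = \varepsilon/(M^2+\sigma^2/S) \in [\varepsilon/(2M^2), \varepsilon/M^2]$; plugging this $\gamma$ in makes the variance term exactly $\varepsilon/2$ and bounds the optimization term by $\frac{M^2 R^2}{\varepsilon K}$, which is at most $\varepsilon/2$ once $K \ge 2M^2R^2/\varepsilon^2$. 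Adding the two halves gives $\Exp{f(\widehat{x}^K)} - f(x^*) \le \varepsilon$.

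The analytic steps here (the contraction inequality, the telescoping, Jensen) are textbook averaged-subgradient-descent; I expect the only point needing care to be the reduction from the asynchronous batch-collection dynamics of Method~\ref{alg:alg_server} to the clean i.i.d. mini-batch recursion above, i.e. confirming that the $S$ stochastic gradients aggregated in round $k$ are i.i.d. samples of $\nabla f(\,\cdot\,;\xi)$ all evaluated at the \emph{same} iterate $x^k$, so that $\widehat{g}^k \in \partial f(x^k)$ and the conditional variance contracts by the factor $S$. That bookkeeping, rather than any inequality, is the main obstacle; everything after it is routine.
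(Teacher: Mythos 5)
Your proof is correct and is exactly the intended argument: the paper does not reprove this result (it is imported from \citep{tyurin2023optimal}), and where it proves the heterogeneous analogue (Theorem~\ref{thm:sgd_heter_convex}) it uses the same reduction you describe --- Method~\ref{alg:alg_server} is mini-batch \algname{SGD} at a fixed iterate $x^k$ with conditional variance $\sigma^2/S$, after which the textbook averaged-subgradient bound $\frac{R^2}{2\gamma K} + \frac{\gamma(M^2+\sigma^2/S)}{2}$ and the stated choices of $S,\gamma,K$ give $\varepsilon$. Your bookkeeping concern is handled as in the paper's discussion of \algname{Rennala SGD}: all $S$ gradients in round $k$ are requested at the same point $x^k$ and their samples $\xi^k_s$ are drawn independently of the computation-time dynamics, so they are i.i.d.\ and unbiased as required.
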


\begin{restatable}{theorem}{THEOREMMAINSTATICTIMECONVEX}
  \label{cor:convex_non_smooth}
  Consider the assumptions and the parameters from Theorem~\ref{thm:sgd_homog_convex}, plus Assumption~\ref{ass:speed}. Then Method~\ref{alg:alg_server} (\algname{Rennala SGD}) converges after at most $\bar{t}_{\ceil{\frac{2 M^2 R^2}{\varepsilon^2}}}$ seconds, where the sequence $\bar{t}_k$ is defined in \eqref{eq:homog_compl}.
\end{restatable}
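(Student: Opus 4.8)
The plan is to mirror the proof of Theorem~\ref{cor:max_time_params} almost verbatim, substituting the convex iteration bound for the nonconvex one. First I would invoke Theorem~\ref{thm:sgd_homog_convex}: with the stepsize $\gamma$ and batch size $S = \max\{\ceil{\nicefrac{\sigma^2}{M^2}}, 1\}$ prescribed there, Method~\ref{alg:alg_server} (\algname{Rennala SGD}) outputs after $K = \ceil{\nicefrac{2 M^2 R^2}{\varepsilon^2}}$ iterations a point $\widehat{x}^K$ with $\Exp{f(\widehat{x}^K)} - f(x^*) \leq \varepsilon$. Hence it suffices to bound the wall-clock time consumed by the first $K$ iterations of the method under the universal computation model.

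Next I would account for the time of a single iteration exactly as in Theorem~\ref{cor:max_time_params}. In iteration $k$ the server fixes $x^k$, asks all $n$ workers to compute stochastic gradients at $x^k$ in parallel, and advances to iteration $k+1$ once $S$ of them have been received. By \eqref{eq:required_time}, if iteration $k$ begins no later than time $\bar{t}_{k-1}$, then by any time $t \geq \bar{t}_{k-1}$ the workers have jointly produced at least $\sum_{i=1}^n \flr{V_i(t) - V_i(\bar{t}_{k-1})}$ stochastic gradients; therefore the $S$-th gradient of this iteration is available no later than the smallest $t$ with $\sum_{i=1}^n \flr{V_i(t) - V_i(\bar{t}_{k-1})} \geq S$, which is well defined by Lemma~\ref{lemma:min_inf} and equals $\bar{t}_k$ from \eqref{eq:homog_compl} (with the threshold $S = \max\{\ceil{\nicefrac{\sigma^2}{M^2}},1\}$ of Theorem~\ref{thm:sgd_homog_convex}). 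A routine induction on $k$ — base case $k = 1$ with $\bar{t}_0 \equiv 0$, and inductive step using that each $V_i$ is non-decreasing, precisely as in the homogeneous nonconvex proof — then shows that iteration $k$ completes by time $\bar{t}_k$, so the whole run terminates after at most $\bar{t}_{\ceil{\nicefrac{2 M^2 R^2}{\varepsilon^2}}}$ seconds.

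I do not expect a genuine obstacle: the analytic content is already supplied by Theorem~\ref{thm:sgd_homog_convex} (iteration complexity) and Lemma~\ref{lemma:min_inf} (well-posedness of the recursion), and the remaining time accounting is identical to the nonconvex homogeneous case treated in Theorem~\ref{cor:max_time_params}. The one place that needs a word of care is bookkeeping the batch size: the defining relation \eqref{eq:homog_compl} is written with $\max\{\ceil{\nicefrac{\sigma^2}{\varepsilon}},1\}$, whereas the convex analysis uses $S = \max\{\ceil{\nicefrac{\sigma^2}{M^2}},1\}$, so throughout this section the sequence $\bar{t}_k$ should be read with that value of the threshold. With this understood, the proof goes through without any new ingredient.
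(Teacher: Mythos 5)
Your proposal is correct and matches the paper's argument: the paper literally proves this theorem by declaring it identical to the time-accounting proof of Theorem~\ref{cor:max_time_params}, which is exactly the iteration-count-plus-parallel-work induction you carry out, with Theorem~\ref{thm:sgd_homog_convex} supplying the iteration bound and Lemma~\ref{lemma:min_inf} the well-posedness of the recursion. Your closing remark that the threshold in \eqref{eq:homog_compl} must be read as the convex batch size $S=\max\{\ceil{\nicefrac{\sigma^2}{M^2}},1\}$ rather than $\max\{\ceil{\nicefrac{\sigma^2}{\varepsilon}},1\}$ is a fair and correct piece of bookkeeping that the paper leaves implicit.
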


\begin{proof}
  The proof is identical to the proof of Theorem~\ref{cor:max_time_params}.
\end{proof}

\subsection{Homogeneous setup and smooth case}
\label{sec:convex_smooth_homog}
In the homogeneous and smooth case, we can use an accelerated technique \citep{nesterov1983method,lan2020first}. Instead of \begin{NoHyper}Line~\ref{alg:step_homog}\end{NoHyper} of Method~\ref{alg:alg_server}, we use the following steps suggested by \citet{lan2020first}:
\begin{equation}
\begin{aligned}
    &\gamma_{k+1} = \gamma \cdot (k + 1), \quad \alpha_{k+1} = 2 / (k+2) \\
    &y^{k+1} = (1 - \alpha_{k+1}) x^k + \alpha_{k+1} u^k, \qquad (u^0 = x^0)\\
    &u^{k+1} = u^{k} - \frac{\gamma_{k+1}}{s^k}g^k, \\
    &x^{k+1} = (1 - \alpha_{k+1}) x^k + \alpha_{k+1} u^{k+1}.
    \label{eq:sub_steps_acc}
\end{aligned}
\end{equation}
A new method with these steps is called the \algname{Accelerated Rennala SGD} method \citep{tyurin2023optimal}. 
\begin{restatable}{theorem}{THEOREMSGDHOMOGCONVEXACC}[\citep{tyurin2023optimal}]
  \label{thm:sgd_homog_convex_acc}
  Let Assumptions~\ref{ass:convex}, \ref{ass:lipschitz_constant} and \ref{ass:stochastic_variance_bounded} hold. Choose any $\varepsilon > 0.$ Let us take the batch size $S = \max\left\{\left\lceil (\sigma^2 R) / (\varepsilon^{3/2} \sqrt{L})\right\rceil, 1\right\},$
  and $\gamma = \min\left\{\frac{1}{4L}, \left[\frac{3 R^2 S}{4 \sigma^2 (K + 1)(K + 2)^2}\right]^{1/2}\right\}$ in Accelerated Method~\ref{alg:alg_server} (\algname{Accelerated Rennala SGD}),
  then after $K \geq \frac{8 \sqrt{L} R}{\sqrt{\varepsilon}}$
  iterations the method guarantees that $\Exp{f(x^K)} - f(x^*) \leq \varepsilon,$ where $R = \norm{x^* - x^{0}}.$
\end{restatable}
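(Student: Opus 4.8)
The plan is to reduce \algname{Accelerated Rennala SGD} to mini-batch accelerated stochastic approximation and then invoke the standard analysis of such schemes \citep{lan2020first}. Exactly as \algname{Rennala SGD} is equivalent, on the optimization side, to plain \algname{SGD} with batch size $S$, the batch-collection loop of Method~\ref{alg:alg_server} run with the updates \eqref{eq:sub_steps_acc} is equivalent to the iteration that, given $(x^k,u^k)$, forms the extrapolated point $y^{k+1}=(1-\alpha_{k+1})x^k+\alpha_{k+1}u^k$, queries $S$ i.i.d.\ stochastic gradients at $y^{k+1}$, averages them into $g^k$, and sets $u^{k+1}=u^k-\tfrac{\gamma_{k+1}}{S}g^k$ (i.e.\ $s^k=S$ in \eqref{eq:sub_steps_acc}) and $x^{k+1}=(1-\alpha_{k+1})x^k+\alpha_{k+1}u^{k+1}$. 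Under Assumption~\ref{ass:stochastic_variance_bounded}, $g^k$ is an unbiased estimator of $\nabla f(y^{k+1})$ with conditional variance at most $\widetilde{\sigma}^2\eqdef\sigma^2/S$; the factor-$S$ variance reduction is the only role played by the batching.

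Next I would run the classical accelerated-SGD potential argument. Combining $L$-smoothness of $f$ at $x^{k+1}$ relative to $y^{k+1}$, convexity of $f$, and the three-point (law-of-cosines) identity for the Euclidean step defining $u^{k+1}$, one obtains a one-step estimate of the form
\[
\frac{\gamma_{k+1}}{\alpha_{k+1}}\Big(\ExpCond{f(x^{k+1})}{\mathcal{F}_k}-f(x^*)\Big)+\tfrac12\norm{u^{k+1}-x^*}^2
\le \frac{\gamma_{k}}{\alpha_{k}}\big(f(x^{k})-f(x^*)\big)+\tfrac12\norm{u^{k}-x^*}^2+c\,\gamma_{k+1}^2\widetilde{\sigma}^2,
\]
valid once $\gamma\le \tfrac{1}{4L}$ so that the curvature contribution ($\propto L\gamma$) is absorbed by the negative $-\tfrac12\norm{u^{k+1}-u^k}^2$ term produced by the identity; here $\mathcal{F}_k$ is the natural filtration and $c$ an absolute constant. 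Telescoping over $k=0,\dots,K-1$, taking total expectation, and using $\gamma_{k+1}=\gamma(k+1)$, $\alpha_{k+1}=2/(k+2)$, so that $\gamma_K/\alpha_K=\Theta(\gamma K^2)$ and $\sum_{k=0}^{K-1}\gamma_{k+1}^2=\Theta(\gamma^2 K^3)$, yields
\[
\Exp{f(x^K)}-f(x^*)\le \frac{C_1 R^2}{\gamma K^2}+C_2\,\gamma\widetilde{\sigma}^2 K,
\qquad R\eqdef\norm{x^0-x^*},
\]
for absolute constants $C_1,C_2$.

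Finally I would balance the two terms in $\gamma$. The unconstrained minimizer over $\gamma$ is $\gamma^\star=\Theta\big(R/(\widetilde{\sigma}K^{3/2})\big)=\Theta\big(R\sqrt{S}/(\sigma K^{3/2})\big)$, and taking $\gamma=\min\{\tfrac{1}{4L},\gamma^\star\}$ — which, after substituting $\widetilde{\sigma}^2=\sigma^2/S$, is precisely the step size in the statement — gives $\Exp{f(x^K)}-f^*=O\big(LR^2/K^2+\widetilde{\sigma}R/\sqrt{K}\big)$. When $\gamma$ saturates at $\tfrac{1}{4L}$ the deterministic term is $O(LR^2/K^2)$, which is $\le \varepsilon/2$ as soon as $K\ge 8\sqrt{L}R/\sqrt{\varepsilon}$ (the hypothesis of the theorem); and the choice $S=\max\{\ceil{\sigma^2 R/(\varepsilon^{3/2}\sqrt{L})},1\}$ forces $\widetilde{\sigma}R/\sqrt{K}=\sigma R/\sqrt{SK}\le \varepsilon/2$ at that value of $K$. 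Adding the two halves gives $\Exp{f(x^K)}-f(x^*)\le\varepsilon$. The main obstacle is the bookkeeping in the potential argument: choosing the weights $\gamma_{k+1}$, $\alpha_{k+1}$ so the deterministic term decays like $1/K^2$ rather than $1/K$, and tracking the case split in $\gamma$ (smoothness-dominated versus noise-dominated regime) carefully enough that the explicit constants in the statement come out; the variance reduction by $S$ and the reduction of the batched loop to \eqref{eq:sub_steps_acc} are routine.
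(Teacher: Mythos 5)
Your proposal is correct and follows essentially the same route as the paper: the paper treats \algname{Accelerated Rennala SGD} as equivalent to the classical mini-batch accelerated stochastic gradient method of \citet{lan2020first} with per-iteration variance $\sigma^2/S$, and then verifies the stated $S$, $\gamma$, and $K$ (the paper simply cites \citet{tyurin2023optimal} and \citet{lan2020first} for this, as in its proof of the heterogeneous analogue). Your potential-function sketch and the final balancing $LR^2/K^2 + \sigma R/\sqrt{SK} \lesssim \varepsilon$ is exactly the calculation behind the stated parameter choices.
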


\begin{theorem}
  \label{cor:convex_smooth}
  Consider the assumptions and the parameters from Theorem~\ref{thm:sgd_homog_convex_acc}, plus Assumption~\ref{ass:speed}. Then Accelerated Method~\ref{alg:alg_server} (\algname{Accelerated Rennala SGD}) converges after at most $\bar{t}_{\ceil{\frac{8 \sqrt{L} R}{\sqrt{\varepsilon}}}}$ seconds, where the sequence $\bar{t}_k$ is defined in \eqref{eq:homog_compl}.
\end{theorem}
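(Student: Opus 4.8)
The plan is to follow the proof of Theorem~\ref{cor:max_time_params} essentially verbatim, since the acceleration is invisible to the timing analysis and the only genuine input is an iteration-count guarantee. First I would invoke Theorem~\ref{thm:sgd_homog_convex_acc}: with the stated choices of $S$ and $\gamma$, \algname{Accelerated Rennala SGD} reaches an $\varepsilon$--accurate point after $K = \ceil{8\sqrt{L}R/\sqrt{\varepsilon}}$ outer iterations, each of which collects a batch of $S = \max\{\ceil{(\sigma^2 R)/(\varepsilon^{3/2}\sqrt{L})}, 1\}$ stochastic gradients (the batch-collection loop, Lines~3--10 of Method~\ref{alg:alg_server}) before performing the update. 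The key observation is that replacing Line~\ref{alg:step_homog} by the accelerated steps~\eqref{eq:sub_steps_acc} does not touch the batch-collection loop at all — the extrapolations $y^{k+1}, u^{k+1}, x^{k+1}$ are computed instantaneously on the server — so the wall-clock time of every iteration is dictated purely by the universal computation model and is completely independent of the acceleration mechanics.

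Next I would bound the duration of iteration $k$. By~\eqref{eq:required_time}, in the time window $[\bar{t}_{k-1}, t]$ the $n$ workers jointly compute at least $\sum_{i=1}^n \flr{V_i(t) - V_i(\bar{t}_{k-1})}$ stochastic gradients, so the server has gathered a full batch by time $\bar{t}_k \eqdef \min\{t \geq 0 : \sum_{i=1}^n \flr{V_i(t) - V_i(\bar{t}_{k-1})} \geq S\}$ with $\bar{t}_0 \equiv 0$, where the minimum exists by Lemma~\ref{lemma:min_inf}. Stopping all workers at the end of iteration $k-1$ and restarting them at the new iterate is exactly what permits using $\bar{t}_{k-1}$ as the lower integration limit, so that no partially computed gradient is double-counted — the same bookkeeping as in the proof of Theorem~\ref{cor:max_time_params}. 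Chaining this estimate over $k = 1, \dots, K$ yields the total time $\bar{t}_{\ceil{8\sqrt{L}R/\sqrt{\varepsilon}}}$, with $\bar{t}_k$ the sequence of~\eqref{eq:homog_compl} (read with the batch size $S$ supplied by Theorem~\ref{thm:sgd_homog_convex_acc} in place of $\max\{\ceil{\sigma^2/\varepsilon},1\}$).

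Because nothing in this argument distinguishes accelerated from plain \algname{SGD}, I expect no real obstacle; the proof should be a single line, \emph{``identical to the proof of Theorem~\ref{cor:max_time_params}''}, precisely as for Theorem~\ref{cor:convex_non_smooth}. The only point warranting a sentence of care is the stop/restart step, which ensures the integral lower limits $\bar{t}_{k-1}$ telescope correctly across iterations; this is already established in Theorem~\ref{cor:max_time_params} and carries over unchanged.
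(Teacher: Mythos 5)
Your proposal is correct and is exactly the paper's argument: the paper's proof of this theorem is the single line ``identical to the proof of Theorem~\ref{cor:max_time_params}'', and your elaboration (iteration count from Theorem~\ref{thm:sgd_homog_convex_acc}, batch-collection timing via \eqref{eq:required_time} and Lemma~\ref{lemma:min_inf}, telescoping through the stop/restart bookkeeping) is just that proof spelled out. Your parenthetical remark that \eqref{eq:homog_compl} must be read with the batch size $S=\max\{\ceil{(\sigma^2 R)/(\varepsilon^{3/2}\sqrt{L})},1\}$ in place of $\max\{\ceil{\sigma^2/\varepsilon},1\}$ is a fair and careful reading of what the paper implicitly intends here.
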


\begin{proof}
  The proof is identical to the proof of Theorem~\ref{cor:max_time_params}.
\end{proof}

\subsection{Heterogeneous setup and nonsmooth case}

Consider the heterogeneous setup discussed in Section~\ref{sec:introduction}.

\begin{restatable}{theorem}{THEOREMSGDHETERCONVEX}
  \label{thm:sgd_heter_convex}
  Let Assumptions~\ref{ass:convex}, \ref{ass:lipschitz_constant_function}, and \ref{ass:stochastic_variance_bounded_convex_heter} hold. Choose any $\varepsilon > 0.$ Let us take $S = \max\left\{\left\lceil\nicefrac{\sigma^2}{M^2}\right\rceil, n\right\},$ and
  $\gamma = \frac{\varepsilon}{M^2 + \sigma^2 / S} \in \left[\frac{\varepsilon}{2 M^2}, \frac{\varepsilon}{M^2}\right]$ in Method~\ref{alg:alg_server_heterog},
  then after $K \geq \nicefrac{2 M^2 R^2}{\varepsilon^2}$
  iterations the method guarantees that $\Exp{f(\widehat{x}^K)} - f(x^*) \leq \varepsilon,$ where $\widehat{x}^K = \frac{1}{K} \sum_{k=0}^{K-1} x^k$ and $R = \norm{x^* - x^{0}}.$
\end{restatable}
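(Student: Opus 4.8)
The plan is to reduce \algname{Malenia SGD} (Method~\ref{alg:alg_server_heterog}) to a plain stochastic subgradient method with a controlled-variance estimator, and then invoke the textbook analysis for convex Lipschitz functions, exactly mirroring the argument behind Theorem~\ref{thm:sgd_homog_convex} but with the heterogeneous batch-collection bookkeeping. First I would observe that Method~\ref{alg:alg_server_heterog} performs the update $x^{k+1} = x^k - \gamma g^k$ with $g^k = \frac{1}{n}\sum_{i=1}^n \frac{1}{B_i^k} g^k_i$ and $g^k_i = \sum_{s=1}^{B_i^k}\nabla f_i(x^k;\xi^k_{i,s})$, where the counts $B_i^k$ are whatever the workers manage to compute before the while-loop exit condition $\big(\tfrac1n\sum_{i}1/B_i^k\big)^{-1}\ge S/n$ is met; in particular every $B_i^k\ge 1$ at exit (otherwise the harmonic mean would be $0<S/n$). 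Conditioning on $x^k$ and on $\{B_i^k\}$, each $\frac{1}{B_i^k}g^k_i$ is the average of $B_i^k$ i.i.d.\ unbiased estimators of a fixed subgradient $\widetilde\nabla f_i(x^k)\in\partial f_i(x^k)$, so $\mathbb{E}[g^k\mid x^k,\{B^k_i\}] = \frac1n\sum_i \widetilde\nabla f_i(x^k)\in\partial f(x^k)$, whence $\|\mathbb{E}[g^k\mid x^k,\{B^k_i\}]\|\le M$ by Assumption~\ref{ass:lipschitz_constant_function}. Since the workers use independent noise, the conditional variances add: $\mathbb{E}[\|g^k-\mathbb{E}[g^k\mid\cdot]\|^2\mid\cdot]\le \frac1{n^2}\sum_i \frac{\sigma^2}{B_i^k} = \frac{\sigma^2}{n}\cdot\frac1n\sum_i\frac1{B_i^k}\le\frac{\sigma^2}{n}\cdot\frac{n}{S}=\frac{\sigma^2}{S}$, using the exit condition. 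Therefore $\mathbb{E}[\|g^k\|^2\mid x^k,\{B^k_i\}]\le M^2+\sigma^2/S$.

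Next I would run the standard descent argument. Expanding $\|x^{k+1}-x^*\|^2 = \|x^k-x^*\|^2 - 2\gamma\langle g^k,x^k-x^*\rangle + \gamma^2\|g^k\|^2$ and taking conditional expectation, convexity of $f$ gives $\langle \mathbb{E}[g^k\mid\cdot],x^k-x^*\rangle\ge f(x^k)-f(x^*)$, so $\mathbb{E}[\|x^{k+1}-x^*\|^2\mid\cdot] \le \|x^k-x^*\|^2 - 2\gamma(f(x^k)-f(x^*)) + \gamma^2(M^2+\sigma^2/S)$. Taking total expectation, telescoping over $k=0,\dots,K-1$, using $\|x^0-x^*\|\le R$ and dropping the nonnegative term $\mathbb{E}[\|x^K-x^*\|^2]$, then dividing by $2\gamma K$ and applying Jensen's inequality to $\widehat x^K=\frac1K\sum_{k=0}^{K-1} x^k$, yields $\mathbb{E}[f(\widehat x^K)]-f(x^*)\le \frac{R^2}{2\gamma K} + \frac{\gamma(M^2+\sigma^2/S)}{2}$.

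Finally I would substitute the parameters. Since $S=\max\{\lceil\sigma^2/M^2\rceil,n\}\ge\sigma^2/M^2$ we get $\sigma^2/S\le M^2$, hence $M^2\le M^2+\sigma^2/S\le 2M^2$; this both verifies the claimed range $\gamma\in[\varepsilon/(2M^2),\varepsilon/M^2]$ for $\gamma=\varepsilon/(M^2+\sigma^2/S)$ and makes the second term exactly $\varepsilon/2$. For the first term, $\frac{R^2}{2\gamma K}=\frac{R^2(M^2+\sigma^2/S)}{2\varepsilon K}\le\frac{M^2 R^2}{\varepsilon K}\le\frac{\varepsilon}{2}$ whenever $K\ge 2M^2R^2/\varepsilon^2$. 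Adding the two bounds gives $\mathbb{E}[f(\widehat x^K)]-f(x^*)\le\varepsilon$, as claimed.

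The only genuinely delicate point — and the one I would treat most carefully — is the conditioning in the first paragraph: I need the counts $\{B_i^k\}$ to be determined by the workers' computation process (the functions $V_i$, or, when the computation powers are stochastic, by randomness statistically independent of the gradient noise), not by the realized stochastic-gradient \emph{values}, so that given $x^k$ and $\{B_i^k\}$ the samples $\{\xi^k_{i,s}\}$ are fresh i.i.d.\ draws and the unbiasedness and variance-addition identities remain valid. This is precisely the structure guaranteed by Protocol~\ref{alg:time_multiple_oracle_protocol} and the oracle \eqref{eq:oracle_inside_random_heter} (the stop-control $c$ never inspects gradient values), so the conditioning argument goes through; everything else is the routine convex-subgradient computation already used for Theorem~\ref{thm:sgd_homog_convex}, and the time-complexity consequence then follows from \eqref{eq:required_time} exactly as in Theorem~\ref{cor:max_time_params_heter}.
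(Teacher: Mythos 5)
Your proposal is correct and follows essentially the same route as the paper: reduce \algname{Malenia SGD} to plain \algname{SGD} with the harmonic-mean-weighted estimator, use the while-loop exit condition $\sum_i 1/B_i \le n^2/S$ to bound the estimator variance by $\sigma^2/S$, and then apply the standard convex $M$-Lipschitz analysis with $\gamma = \varepsilon/(M^2+\sigma^2/S)$. The only difference is cosmetic — the paper cites the classical mini-batch \algname{SGD} result of \citet{lan2020first} where you write out the telescoping/Jensen argument explicitly, and your remark on conditioning on $\{B_i^k\}$ makes precise a point the paper leaves implicit.
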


\begin{proof}
  Notice that \algname{Malenia SGD} is equivalent to the classical \algname{SGD} method with the step
  \begin{align*}
    x^{k+1} = x^k - \gamma \frac{1}{n} \sum_{j=1}^n \frac{1}{B_j} \sum_{i=1}^{B_j} \nabla f_j(x^k;\xi^k_{j,i}),
  \end{align*}
  where the variance of the unbiased gradient estimator can be bounded in the following way:
  \begin{align*}
    \ExpSub{\xi^k}{\norm{\frac{1}{n} \sum_{j=1}^n \frac{1}{B_j} \sum_{i=1}^{B_j} \nabla f_j(x^k;\xi^k_{j,i}) - \nabla f(x^k)}^2} \leq \frac{\sigma^2}{n^2} \left(\sum_{j=1}^n \frac{1}{B_j}\right).
  \end{align*}
  In Method~\ref{alg:alg_server_heterog}, we ensure that $\left(\sum_{j=1}^n \frac{1}{B_j}\right) \leq \frac{n^2}{S}.$ Therefore
  \begin{align*}
    \ExpSub{\xi^k}{\norm{\frac{1}{n} \sum_{j=1}^n \frac{1}{B_j} \sum_{i=1}^{B_j} \nabla f_j(x^k;\xi^k_{j,i}) - \nabla f(x^k)}^2} \leq \frac{\sigma^2}{S}.
  \end{align*}
  Thus, we can use the classical result from the literature (e.g. \citep{lan2020first}). We get
  \begin{align*}
      \Exp{f(\widehat{x}^K)} - f(x^*) \leq \varepsilon
  \end{align*}
  if $$K \geq \frac{2 M^2 \norm{x^* - x^{0}}^2}{\varepsilon^2} \geq \frac{(M^2 + \frac{\sigma^2}{S})\norm{x^* - x^{0}}^2}{\varepsilon^2}$$ for the stepsize $$\gamma = \frac{\varepsilon}{M^2 + \frac{\sigma^2}{S}} \in \left[\frac{\varepsilon}{2 M^2}, \frac{\varepsilon}{M^2}\right],$$
  where we use the fact that $S \geq \nicefrac{\sigma^2}{M^2}.$
\end{proof}

\begin{theorem}
  Consider the assumptions and the parameters from Theorem~\ref{thm:sgd_heter_convex}, plus Assumption~\ref{ass:speed}. Then Method~\ref{alg:alg_server_heterog} (\algname{Malenia SGD}) converges after at most $\bar{t}_{\ceil{\frac{2 M^2 R^2}{\varepsilon^2}}}$ seconds, where the sequence $\bar{t}_k$ is defined in \eqref{eq:upper_bound}.
\end{theorem}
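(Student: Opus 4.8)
The plan is to reuse the argument of Theorem~\ref{cor:max_time_params_heter} almost verbatim, since \algname{Malenia SGD} is literally the same method here as in the nonconvex case and only the required number of outer iterations changes. By Theorem~\ref{thm:sgd_heter_convex}, with $S = \max\{\lceil \sigma^2/M^2\rceil, n\}$ and the stated stepsize the method achieves $\Exp{f(\widehat{x}^K)} - f(x^*) \le \varepsilon$ as soon as $K \ge \lceil 2 M^2 R^2/\varepsilon^2\rceil$, so it suffices to upper bound the wall-clock time spent on $\lceil 2 M^2 R^2/\varepsilon^2\rceil$ outer iterations of Method~\ref{alg:alg_server_heterog} under Assumption~\ref{ass:speed}.

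First I would isolate a single outer iteration. Iteration $k$ cannot begin before the time $\bar{t}_{k-1}$ at which iteration $k-1$ terminated (with $\bar{t}_0 \equiv 0$); at that instant every worker is re-asked to compute stochastic gradients at the current point $x^k$ and its counter $B_i$ is reset to $0$. By the universal computation model \eqref{eq:required_time}, in the window $[\bar{t}_{k-1}, t]$ worker $i$ delivers at least $\flr{V_i(t) - V_i(\bar{t}_{k-1})}$ stochastic gradients, hence at time $t$ the \textbf{while} loop of Method~\ref{alg:alg_server_heterog} has $B_i \ge \flr{V_i(t) - V_i(\bar{t}_{k-1})}$ and $\big(\frac1n\sum_{i=1}^n \frac1{B_i}\big)^{-1} \ge \big(\frac1n\sum_{i=1}^n \frac1{\flr{V_i(t) - V_i(\bar{t}_{k-1})}}\big)^{-1}$. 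The loop exits once this quantity reaches $S/n$, and since $t\mapsto\flr{V_i(t) - V_i(\bar{t}_{k-1})}$ is non-decreasing (continuity and monotonicity of $V_i$, recorded just after Example~\ref{ex:simple}, together with the floor), Lemma~\ref{lemma:min_inf} guarantees the relevant infimum is attained; so iteration $k$ ends no later than $\bar{t}_k$, the defining recursion being that of \eqref{eq:upper_bound} with the while-loop threshold $S/n = \max\{\lceil\sigma^2/M^2\rceil/n, 1\}$ playing the role of the right-hand side (which is bounded above exactly as in the proof of Theorem~\ref{cor:max_time_params_heter}, via $S \le \max\{2\sigma^2/M^2, n\}$).

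Finally I would chain the per-iteration bounds by induction: iteration $1$ ends by $\bar{t}_1$; if iteration $k-1$ ends by $\bar{t}_{k-1}$ then iteration $k$ starts at most at $\bar{t}_{k-1}$ and, by the same accounting with base time $\bar{t}_{k-1}$, ends by $\bar{t}_k$. Taking $k = \lceil 2 M^2 R^2/\varepsilon^2\rceil$ gives the stated bound. I do not expect a genuine obstacle here --- the mathematical content is entirely the per-iteration computation-model accounting already carried out for Theorem~\ref{cor:max_time_params_heter}; the only things to be careful about are the restart time $\bar{t}_{k-1}$ inside $V_i(\cdot) - V_i(\bar{t}_{k-1})$ and the routine matching of the while-loop threshold $S/n$ with the $\max\{\cdot,1\}$ term of \eqref{eq:upper_bound}.
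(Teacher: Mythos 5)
Your proposal is correct and is essentially the paper's own argument: the paper proves this theorem by stating that the proof is identical to that of Theorem~\ref{cor:max_time_params_heter}, which is exactly the per-iteration accounting (workers deliver at least $\flr{V_i(t)-V_i(\bar{t}_{k-1})}$ gradients after the restart at $\bar{t}_{k-1}$, so the while-loop threshold $S/n$ is met by time $\bar{t}_k$) chained by induction over the $\ceil{2M^2R^2/\varepsilon^2}$ iterations guaranteed by Theorem~\ref{thm:sgd_heter_convex}. Your handling of the threshold via $S \leq \max\{2\sigma^2/M^2, n\}$ mirrors the bound used there, so no gap remains.
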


\begin{proof}
  The proof is identical to the proof of Theorem~\ref{cor:max_time_params_heter}.
\end{proof}

\subsection{Heterogeneous setup and smooth case}

Using the same idea as in Section~\ref{sec:convex_smooth_homog}, we will modify \algname{Malenia SGD} and, instead of \begin{NoHyper}Line~\ref{line:step_heter}\end{NoHyper} from Algorithm~\ref{alg:alg_server_heterog}, we use the lines \eqref{eq:sub_steps_acc}. Such a method is called \algname{Accelerated Malenia SGD}.

\begin{restatable}{theorem}{THEOREMSGDHETERCONVEXACC}
  \label{thm:sgd_heter_convex_acc}
  Let Assumptions~\ref{ass:convex} and \ref{ass:lipschitz_constant}, and \ref{ass:stochastic_variance_bounded_heter} hold. Choose any $\varepsilon > 0.$ Let us take $S = \max\left\{\left\lceil (\sigma^2 R) / (\varepsilon^{3/2} \sqrt{L})\right\rceil, n\right\},$
  and $\gamma = \min\left\{\frac{1}{4L}, \left[\frac{3 R^2 S}{4 \sigma^2 (K + 1)(K + 2)^2}\right]^{1/2}\right\}$ in Accelerated Method~\ref{alg:alg_server_heterog} (\algname{Accelerated Malenia SGD}),
  then after $K \geq \frac{8 \sqrt{L} R}{\sqrt{\varepsilon}}$
  iterations the method guarantees that $\Exp{f(x^K)} - f(x^*) \leq \varepsilon,$ where $R = \norm{x^* - x^{0}}.$
\end{restatable}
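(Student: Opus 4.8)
The plan is to reduce \algname{Accelerated Malenia SGD} to the accelerated stochastic gradient method of \citet{lan2020first,nesterov1983method} run with a single composite gradient estimator, exactly in the spirit of the proof of Theorem~\ref{thm:sgd_heter_convex}. In iteration $k$, the batch-collection loop of Method~\ref{alg:alg_server_heterog} produces counts $B_1,\dots,B_n$ and the vector
$$g^k = \frac{1}{n}\sum_{j=1}^n \frac{1}{B_j}\sum_{i=1}^{B_j}\nabla f_j(x^k;\xi^k_{j,i}),$$
after which the accelerated updates \eqref{eq:sub_steps_acc} are applied with $s^k = 1$. Conditionally on $\{B_j\}$ this estimator is unbiased, $\ExpSub{\xi^k}{g^k} = \nabla f(x^k)$, and by mutual independence and Assumption~\ref{ass:stochastic_variance_bounded_heter},
$$\ExpSub{\xi^k}{\norm{g^k - \nabla f(x^k)}^2} \leq \frac{\sigma^2}{n^2}\sum_{j=1}^n\frac{1}{B_j}.$$
The loop terminates only when $\big(\tfrac1n\sum_{j}\tfrac1{B_j}\big)^{-1}\ge\tfrac{S}{n}$, i.e. $\sum_j \tfrac1{B_j}\le \tfrac{n^2}{S}$, so $\ExpSub{\xi^k}{\norm{g^k-\nabla f(x^k)}^2}\le \sigma^2/S$: the effective noise is that of a minibatch of size $S$.

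With this per-step variance bound in hand, I would invoke the standard guarantee for the accelerated stochastic scheme \eqref{eq:sub_steps_acc} on an $L$-smooth convex $f$ (the precise statement is the one used in \citet{tyurin2023optimal}, following \citet{lan2020first}): for the schedule $\gamma_{k+1}=\gamma(k+1)$, $\alpha_{k+1}=2/(k+2)$ and any $\gamma\le 1/(4L)$,
$$\Exp{f(x^K)} - f(x^*) \;\le\; \frac{c_1 L R^2}{K^2} \;+\; c_2\,\gamma K\,\frac{\sigma^2}{S},\qquad R=\norm{x^*-x^0},$$
with absolute constants $c_1,c_2$ (the first term is $c_1 R^2/(\gamma K(K+1))$ up to constants when the cap on $\gamma$ is inactive). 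The stepsize $\gamma=\min\{\tfrac1{4L},[3R^2 S/(4\sigma^2(K+1)(K+2)^2)]^{1/2}\}$ is exactly the choice that balances these terms, since the second option makes $\gamma K\sigma^2/S \asymp R^2/(\gamma K^2)$ while $(K+1)(K+2)^2\asymp K^3$; after balancing, the bound reads $\Exp{f(x^K)}-f(x^*)\lesssim L R^2/K^2 + \sigma R/\sqrt{SK}$.

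It remains to check that $K\ge 8\sqrt L R/\sqrt\varepsilon$ drives the right-hand side below $\varepsilon$. The smoothness term $O(LR^2/K^2)$ is $\le\varepsilon/2$ once $K\gtrsim \sqrt L R/\sqrt\varepsilon$. For the noise term, $\sigma R/\sqrt{SK}\le\varepsilon/2$ iff $K\gtrsim \sigma^2 R^2/(S\varepsilon^2)$, and the choice $S=\max\{\lceil \sigma^2 R/(\varepsilon^{3/2}\sqrt L)\rceil,n\}$ gives
$$\frac{\sigma^2 R^2}{S\,\varepsilon^2}\;\le\;\frac{\sigma^2 R^2}{\varepsilon^2}\cdot\frac{\varepsilon^{3/2}\sqrt L}{\sigma^2 R}\;=\;\frac{\sqrt L\,R}{\sqrt\varepsilon},$$
so the same threshold $K\gtrsim\sqrt L R/\sqrt\varepsilon$ controls both contributions; picking the absolute constants so that the factor $8$ suffices completes the argument. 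The only nontrivial bookkeeping is matching constants between the displayed stepsize and the exact form of Lan's bound. Because the algorithm and its analysis are literally those of \citet{tyurin2023optimal} with the per-step variance replaced by $\sigma^2/S$ — the $B_j$'s are random, but the bound holds conditionally, which is all the accelerated-SGD recursion needs — there is no genuine obstacle: the entire content of the theorem is the $\sigma^2/S$ variance-reduction established in the first step.
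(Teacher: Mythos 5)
Your proposal is correct and follows essentially the same route as the paper: the paper's proof simply observes that \algname{Accelerated Malenia SGD} is the accelerated minibatch method of \citet{lan2020first} with per-step variance $\sigma^2/S$ (the bound established in the proof of Theorem~\ref{thm:sgd_heter_convex}) and then repeats the argument of Theorem~\ref{thm:sgd_homog_convex_acc}, which is precisely your reduction plus the balancing of the $LR^2/K^2$ and noise terms under the stated $S$ and $K$.
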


\begin{proof}
  \algname{Accelerated Malenia SGD} is equivalent to the classical accelerated stochastic gradient method with a mini-batch from \citep{lan2020first}. The proof repeats the proofs of Theorem~\ref{thm:sgd_homog_convex_acc} and Theorem~\ref{thm:sgd_heter_convex}.
\end{proof}

\begin{theorem}
  Consider the assumptions and the parameters from Theorem~\ref{thm:sgd_heter_convex_acc}, plus Assumption~\ref{ass:speed}. Then Accelerated Method~\ref{alg:alg_server_heterog} (\algname{Accelerated Malenia SGD}) converges after at most $\bar{t}_{\ceil{\frac{8 \sqrt{L} R}{\sqrt{\varepsilon}}}}$ seconds, where the sequence $\bar{t}_k$ is defined in \eqref{eq:upper_bound}.
\end{theorem}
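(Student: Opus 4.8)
The plan is to reduce this statement to the proof of Theorem~\ref{cor:max_time_params_heter} almost verbatim, exploiting the fact that \algname{Accelerated Malenia SGD} differs from \algname{Malenia SGD} (Method~\ref{alg:alg_server_heterog}) \emph{only} in the update applied once a batch has been collected: the accelerated recursion \eqref{eq:sub_steps_acc} replaces Line~\ref{line:step_heter}, while the batch-collection \textbf{while}-loop, with its stopping criterion $\big(\tfrac1n\sum_{i=1}^n \tfrac1{B_i}\big)^{-1} < \tfrac{S}{n}$ and its per-worker counters $B_i$, is untouched. First I would invoke Theorem~\ref{thm:sgd_heter_convex_acc} to obtain that the method produces an $\varepsilon$-accurate point after $K = \ceil{8\sqrt{L}R/\sqrt{\varepsilon}}$ outer iterations, with the prescribed $S$ and $\gamma$.

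Next I would bound the wall-clock time of each outer iteration using Assumption~\ref{ass:speed} and the universal computation model. By \eqref{eq:required_time}, in the interval $[\bar t_{k-1}, t]$ worker $i$ produces at least $\flr{V_i(t) - V_i(\bar t_{k-1})}$ stochastic gradients, so if iteration $k-1$ has ended by time $\bar t_{k-1}$, then at time $t$ during iteration $k$ we have $B_i \geq \flr{V_i(t) - V_i(\bar t_{k-1})}$, and hence the \textbf{while}-loop of iteration $k$ exits no later than the first time $t$ at which $\big(\tfrac1n\sum_{i=1}^n \flr{V_i(t)-V_i(\bar t_{k-1})}^{-1}\big)^{-1} \geq \tfrac{S}{n}$; this defining time equals $\bar t_k$ from \eqref{eq:upper_bound} (the $\min$ being attained by Lemma~\ref{lemma:min_inf}), once one records the threshold determined by $S = \max\{\ceil{(\sigma^2 R)/(\varepsilon^{3/2}\sqrt L)}, n\}$ via the usual $S \leq \max\{2(\sigma^2 R)/(\varepsilon^{3/2}\sqrt L), n\}$-type estimate. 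A straightforward induction over $k = 1,\dots,K$ then accumulates these bounds into a total running time of at most $\bar t_{\ceil{8\sqrt{L}R/\sqrt{\varepsilon}}}$.

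I do not anticipate a genuine obstacle: everything substantive is already contained in Theorem~\ref{thm:sgd_heter_convex_acc} (iteration complexity of the accelerated dynamics) and in the time-to-iteration translation carried out in the proof of Theorem~\ref{cor:max_time_params_heter}, and the present claim is just their composition. The only point requiring a line of care is cosmetic — matching the stopping threshold $S/n$ induced by the accelerated, smooth-convex batch size to the constant appearing in the recursion \eqref{eq:upper_bound}, exactly as is done in the proof of Theorem~\ref{cor:max_time_params_heter}; the accelerated steps \eqref{eq:sub_steps_acc} themselves play no role in this part of the argument, since they affect only what happens \emph{between} batch collections, not the time each collection takes.
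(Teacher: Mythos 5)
Your proposal is correct and matches the paper's approach exactly: the paper's proof is literally the one-line remark that the argument is identical to that of Theorem~\ref{cor:max_time_params_heter}, which is precisely the reduction you carry out (acceleration changes only the update between batch collections, so the time-per-iteration analysis of the \textbf{while}-loop is unchanged, and Theorem~\ref{thm:sgd_heter_convex_acc} supplies the iteration count $\ceil{8\sqrt{L}R/\sqrt{\varepsilon}}$). Your side remark about matching the threshold $S/n$ to the constant in \eqref{eq:upper_bound} is the right point of care, and is handled the same way as in the proof of Theorem~\ref{cor:max_time_params_heter}.
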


\begin{proof}
  The proof is identical to the proof of Theorem~\ref{cor:max_time_params_heter}.
\end{proof}

\section{Proof of Examples}

\label{sec:proof_examples}

\subsection{Homogeneous setup}

\EXAMPLEFIXED*

\begin{proof}
  Clearly, $V_i(t) = \int_{0}^{t} v_i d \tau = v_i t.$ Substituting this to \eqref{eq:homog_compl}, we get
   $\bar{t}_k = \min\left\{t \geq 0 \, : \, \sum_{i=1}^n \flr{v_i (t - \bar{t}_{k-1})} \geq \max\left\{\ceil{\nicefrac{\sigma^2}{\varepsilon}}, 1\right\}\right\} (\bar{t}_0 \equiv 0).$ Equivalently, we have to find $\bar{\delta}$ such that
   \begin{align}
     \label{eq:ByVQrAxSZlUTEKrv}
     \textstyle \bar{\delta} = \min\left\{\delta \geq 0 \, : \, \sum\limits_{i=1}^n \flr{v_i \delta} \geq \max\left\{\ceil{\nicefrac{\sigma^2}{\varepsilon}}, 1\right\}\right\}
   \end{align}
   and obtain $\bar{t}_{k} = \bar{\delta} + \bar{t}_{k-1} = k \bar{\delta}.$ Let us show that 
   \begin{align}
     \label{eq:nksidAGZejeoUcVfOftC}
     \textstyle \bar{\delta}_{\nicefrac{1}{4}} \leq \bar{\delta} \leq \bar{\delta}_{4}
   \end{align}
   where $\pi$ is a permutation such that $v_{\pi_1} \geq \dots \geq v_{\pi_n},$ and the $\bar{\delta}_{\cdot}$ is defined as
   \begin{align}
     \label{eq:uBHiaI}
     \textstyle \bar{\delta}_{c} \eqdef c \times \min\limits_{j \in [n]} \left(\sum_{i=1}^j v_{\pi_i}\right)^{-1} \left(\frac{\sigma^2}{\varepsilon} + j\right) = c \times \left(\sum_{i=1}^{j^*} v_{\pi_i}\right)^{-1} \left(\frac{\sigma^2}{\varepsilon} + j^*\right), 
   \end{align}
   where $j^*$ is the largest index that minimizes the formula, and $c > 0$ is a constant. Using Lemma~\ref{lemma:techn1}, we obtain $1 / (4 v_{\pi_j^*}) \leq \bar{\delta}_{\nicefrac{1}{4}} < 1 / (4 v_{\pi_{(j^* + 1)}}),$ where we define $\nicefrac{1}{v_{n + 1}} \equiv \infty$ for convenience. Thus
   \begin{align*}
     \textstyle \sum\limits_{i=1}^n \flr{v_i \bar{\delta}_{\nicefrac{1}{4}}} \overset{\bar{\delta}_{\nicefrac{1}{4}} < \nicefrac{1}{v_{j^* + 1}}}{=} \sum\limits_{i=1}^{j^*} \flr{v_{\pi_i} \bar{\delta}_{\nicefrac{1}{4}}} \leq \sum\limits_{i=1}^{j^*} \left(2 v_{\pi_i} \bar{\delta}_{\nicefrac{1}{2}} - \frac{1}{2}\right) \overset{\eqref{eq:uBHiaI}}{=} \frac{\sigma^2}{2 \varepsilon} < \max\left\{\ceil{\frac{\sigma^2}{\varepsilon}}, 1\right\},
   \end{align*}
   where the first inequality due to $\flr{x} \leq 2x - \frac{1}{2}$ for all $x \geq \nicefrac{1}{4}.$ Therefore $\bar{\delta} \geq \bar{\delta}_{\nicefrac{1}{4}}.$ On the other hand
   \begin{align*}
     \textstyle \sum\limits_{i=1}^n \flr{v_i \bar{\delta}_{4}} \geq \sum\limits_{i=1}^{j^*} \flr{v_{\pi_i} \bar{\delta}_{4}} \overset{L.\ref{lemma:techn1}, \bar{\delta}_{4} \geq \nicefrac{1}{v_{j^*}}}{\geq} \frac{1}{2} \sum\limits_{i=1}^{j^*} v_{\pi_i} \bar{\delta}_{4} \overset{\eqref{eq:uBHiaI}}{=} 2 \left(\frac{\sigma^2}{\varepsilon} + j^*\right) \geq \max\left\{\ceil{\frac{\sigma^2}{\varepsilon}}, 1\right\}.
   \end{align*}
   We can conclude that $\bar{\delta} \leq \bar{\delta}_{4},$ and using the inequality $\bar{\delta} \geq \bar{\delta}_{\nicefrac{1}{4}}$, we have proved \eqref{eq:nksidAGZejeoUcVfOftC}. It is left to use the equality $\bar{t}_{k} = k \bar{\delta}.$
\end{proof}

\EXAMPLETREND*

\begin{proof}
  We have $V_i(t) = \int_{0}^{t} v_i g(\tau) d \tau = v_i G(t).$ We substitute this equality to \eqref{eq:homog_compl} and get
  $\bar{t}_k = \min\left\{t \geq 0 \, : \, \sum_{i=1}^n \flr{v_i (G(t) - G(\bar{t}_{k-1}))} \geq \max\left\{\ceil{\nicefrac{\sigma^2}{\varepsilon}}, 1\right\}\right\}.$
  Since $G(t)$ is continuous and increasing, one can show that
    $\textstyle G(\bar{t}_k) = \min\left\{t \geq 0 \, : \, \sum_{i=1}^n \flr{v_i (t - G(\bar{t}_{k-1}))} \geq \max\left\{\ceil{\nicefrac{\sigma^2}{\varepsilon}}, 1\right\}\right\}.$ Therefore $G(\bar{t}_k) = \bar{\delta} + G(\bar{t}_{k-1}) = k \bar{\delta}$ and $\bar{t}_k = G^{-1}(k \bar{\delta}),$ where $\bar{\delta}$ is defined in \eqref{eq:ByVQrAxSZlUTEKrv}. Using \eqref{eq:nksidAGZejeoUcVfOftC}, we get \eqref{eq:bwvFomtFlwGBlgeblr}.
\end{proof}

\EXAMPLERANDOM*

\begin{hproof}
  We have $V_i(t) = \int_{0}^{t} v_i(\tau) d = v \times \mathbbm{1} \left[\textnormal{measure of intervals before the time } t\right] \approx \nicefrac{v t}{k_i}.$ We substitute it to \eqref{eq:homog_compl} and get $\bar{t}_k \eqdef \min\left\{t \geq 0 \, : \, \sum_{i=1}^n \flr{\nicefrac{v t}{k_i} - \nicefrac{v \bar{t}_{k-1}}{k_i}} \geq \max\left\{\ceil{\nicefrac{\sigma^2}{\varepsilon}}, 1\right\}\right\}.$ Using the same reasoning as in Example~\ref{ex:simple_compl}, one can easily get \eqref{eq:TIwYNisWRAivDMCfdAE}.
\end{hproof}

\subsection{Heterogeneous setup}

\EXAMPLEHETERFIXED*

\begin{proof}
  Clearly, $V_i(t) = \int_{0}^{t} v_i d \tau = v_i t.$ Substituting this to \eqref{eq:upper_bound}, we get $
    \bar{t}_k \eqdef \min\left\{t \geq 0 \, : \, (\nicefrac{1}{n} \sum_{i=1}^n \flr{v_i (t - \bar{t}_{k-1})}^{-1})^{-1} \geq \max\left\{\nicefrac{2 \sigma^2}{n \varepsilon}, 1\right\}\right\}.$ Equivalently, we have to find $\bar{\delta}$ such that 
    \begin{align}
      \label{eq:HerGofIi}
      \bar{\delta} = \min\left\{\delta \geq 0 \, : \, \nicefrac{1}{n} \sum_{i=1}^n \flr{v_i \delta}^{-1} \leq \min\left\{\nicefrac{n \varepsilon}{2 \sigma^2}, 1\right\}\right\}
    \end{align}
    and calculate $\bar{t}_k = \delta + \bar{t}_{k - 1} = k \bar{\delta}.$ Let us show that 
    \begin{align}
      \label{eq:IzpJGassagPwg}
      \textstyle \bar{\delta}_{\nicefrac{1}{4}} \leq \bar{\delta} \leq \bar{\delta}_{4}
    \end{align}
    where the $\bar{\delta}_{\cdot}$ is defined as
    \begin{align}
      \label{eq:neZpKwdjesRZeQSQ}
      \textstyle \bar{\delta}_{c} \eqdef c \times \left(\max_{i \in [n]} \frac{1}{v_i} + \left(\frac{1}{n} \sum_{i=1}^n \frac{1}{v_i}\right)\frac{\sigma^2}{n \varepsilon}\right)
    \end{align}
    and $c > 0$ is a constant.
    Since $\bar{\delta}_{4} \geq \max_{i \in [n]} \frac{1}{v_i},$ we have $\flr{v_i \delta_{4}} \geq \frac{v_i \delta_{4}}{2}$ for all $i \in [n]$ and 
    \begin{align*}
      &\frac{1}{n} \sum_{i=1}^n \flr{v_i \delta_{4}}^{-1} \leq \frac{1}{n} \sum_{i=1}^n \frac{2}{v_i \delta_{4}} \overset{\eqref{eq:neZpKwdjesRZeQSQ}}{\leq} \frac{1}{n} \sum_{i=1}^n \frac{1}{1 + 2 v_i \left(\frac{1}{n} \sum_{i=1}^n \frac{1}{v_i}\right)\frac{\sigma^2}{n \varepsilon}} \\
      &\leq \min\left\{\frac{1}{n} \sum_{i=1}^n \frac{1}{2 v_i \left(\frac{1}{n} \sum_{i=1}^n \frac{1}{v_i}\right)\frac{\sigma^2}{n \varepsilon}}, 1\right\} = \min\left\{\frac{n \varepsilon}{2 \sigma^2}, 1\right\}.
    \end{align*}
    Thus $\bar{\delta} \leq \bar{\delta}_{4}.$ On the other hand, let us show that $\frac{1}{n} \sum_{i=1}^n \flr{v_i \delta_{\nicefrac{1}{4}}}^{-1} > \min\left\{\nicefrac{n \varepsilon}{2 \sigma^2}, 1\right\}.$ If $\max_{i \in [n]} \nicefrac{1}{v_i} > \left(\frac{1}{n} \sum_{i=1}^n \nicefrac{1}{v_i}\right)\nicefrac{\sigma^2}{n \varepsilon},$ then $\delta_{\nicefrac{1}{4}} < \nicefrac{1}{2} \max_{i \in [n]} \nicefrac{1}{v_i},$ $\flr{v_j \delta_{\nicefrac{1}{4}}} = 0$ for $j = \arg\max_{i \in [n]} \nicefrac{1}{v_i},$ and $\frac{1}{n} \sum_{i=1}^n \flr{v_i \delta_{\nicefrac{1}{4}}}^{-1} = \infty \geq \nicefrac{n \varepsilon}{2 \sigma^2}.$ If $\max_{i \in [n]} \nicefrac{1}{v_i} \leq \left(\frac{1}{n} \sum_{i=1}^n \nicefrac{1}{v_i}\right)\nicefrac{\sigma^2}{n \varepsilon},$ then since $\flr{x} \leq x$ for all $x \geq 0,$ we get
    \begin{align*}
      \frac{1}{n} \sum_{i=1}^n \flr{v_i \delta_{\nicefrac{1}{4}}}^{-1} \geq \frac{1}{n} \sum_{i=1}^n (v_i \delta_{\nicefrac{1}{4}})^{-1} \overset{\eqref{eq:neZpKwdjesRZeQSQ}}{\geq} \frac{1}{n} \sum_{i=1}^n \frac{2}{v_i \left(\frac{1}{n} \sum_{i=1}^n \frac{1}{v_i}\right)\frac{\sigma^2}{n \varepsilon}} > \frac{n \varepsilon}{2 \sigma^2}.
    \end{align*}
    Therefore $\frac{1}{n} \sum_{i=1}^n \flr{v_i \delta_{\nicefrac{1}{4}}}^{-1} > \min\left\{\nicefrac{n \varepsilon}{2 \sigma^2}, 1\right\},$ meaning $\bar{\delta} \geq \delta_{\nicefrac{1}{4}}.$
\end{proof}

{\EXAMPLETRENDHETER*}

{
\begin{proof}
  We have $V_i(t) = \int_{0}^{t} v_i g(\tau) d \tau = v_i G(t).$ We substitute this equality to \eqref{eq:upper_bound} and get
  $\bar{t}_k = \min\left\{t \geq 0 \, : \, \left(\frac{1}{n} \sum_{i=1}^n \flr{v_i \left(G(t) - G(\bar{t}_{k-1})\right)}^{-1}\right)^{-1} \geq \max\left\{\frac{2 \sigma^2}{n \varepsilon}, 1\right\}\right\}.$
  Since $G(t)$ is continuous and increasing, one can show that
    $\textstyle G(\bar{t}_k) = \min\left\{t \geq 0 \, : \, \left(\frac{1}{n} \sum_{i=1}^n \flr{v_i \left(t - G(\bar{t}_{k-1})\right)}^{-1}\right)^{-1} \geq \max\left\{\frac{2 \sigma^2}{n \varepsilon}, 1\right\}\right\}.$ Therefore $G(\bar{t}_k) = \bar{\delta} + G(\bar{t}_{k-1}) = k \bar{\delta}$ and $\bar{t}_k = G^{-1}(k \bar{\delta}),$ where $\bar{\delta}$ is defined in \eqref{eq:HerGofIi}. Using \eqref{eq:IzpJGassagPwg}, we get \eqref{eq:bwvFomtFlwGBlgeblrheter}.
\end{proof}
}

\end{document}